\crefname{hypothesis}{Hypothesis}{Hypotheses}
\crefname{fact}{Fact}{Facts}
\title{Posterior contraction rates of computational methods for Bayesian data assimilation\thanks{Submitted to the editors DATE.}}
\author{Erik Burman\thanks{Department of Mathematics, University College London, London 
  (\email{e.burman@ucl.ac.uk}).}
\and Mingfei Lu\thanks{Department of Mathematics, University College London, London
  (\email{mingfei.lu.21@ucl.ac.uk}).}
}
\numberwithin{equation}{section}
\begin{document}

\maketitle

\begin{abstract}
    In this paper, we analyze posterior consistency of a Bayesian data assimilation problem under discretization. We prove convergence rates for the discrete posterior to ground truth solution under both conforming discretization and finite element discretization (usually non-conforming). The analysis is based on the coupling of asymptotics between the number of samples and the dimension of discrete spaces. In the finite element discretization, tailor-made discrete priors, instead of the discretization of continuous priors, are used to generate an optimal convergence rate.
\end{abstract}


\section{Introduction}
Bayesian methods for inverse problems subject to partial differential equations have received increasing attention in recent years and enriched deterministic methods through introducing, and quantifying, uncertainty. In these methods, inference is made through possibly noisy observations in addition to the underlying physical laws. Such development extends the deterministic approach and provides a methodology to a larger class of problems where no deterministic method can be applied. For a comprehensive review of such methods, we refer the reader to \cite{stuart2010inverse}. 

A landmark in the recent development of Bayesian inverse problems is the \textit{statistical consistency} (or posterior consistency)—the question of whether the posterior distribution concentrates on the true solution as more data are collected or the noise is vanishing \cite{van2008rates,knapik2011bayesian,agapiou2013posterior,agapiou2014bayesian,florens2016regularizing,monard2021statistical,nickl2024posterior}. Among these references, Nickl \textit{et al}. \cite{abraham2020statistical,giordano2020consistency,nickl2020convergence,nickl2024posterior} have built a framework for posterior contraction for a large class of data assimilation and Bayesian inverse problems with certain stability properties. We also mention \cite{nickl2023bayesian} as a comprehensive material on the topic. We will introduce an ill-posed linear model problem that fits this framework and use it as a baseline for the theoretical developments below.

 Once consistency is established, one can extract point estimates such as the posterior mean via Markov‑chain Monte Carlo (MCMC) sampling or the \textit{maximum a posteriori} (MAP) estimator via optimization. Various types of such computational methods can be found in, e.g., \cite{sanz2024analysis,marzouk2009stochastic,papandreou2023theoretical,stuart2010inverse,nickl2022polynomial}

One of the most fundamental aspects of computational science is discretization. Since computations must be carried out in finite-dimensional spaces, we are always required to approximate infinite-dimensional function spaces with finite-dimensional representations that are suitable for numerical analysis. The dominant strategy in the Bayesian inverse‑problems literature is to truncate an eigen-basis of the relevant Hilbert space. This is an ideal choice since the definition of a Gaussian measure in separable Hilbert spaces is closely related to its basis, and the eigen-basis usually maintains smoothness of the continuous space. Such a discretization is \textit{global} and usually leads to exponential convergence. In \cite{nickl2023bayesian}, the authors proposed a convergence algorithm of this kind for some elliptic inverse problem that maintains posterior consistency. 

On the other hand, in many PDE settings, an explicit eigen-basis may be unavailable. In this case we need to adopt local discretizations such as finite elements. In posterior contraction theory, convergence rates are often generated by the smoothness of the Cameron-Martin space. This, however, may no longer hold in finite element spaces since most finite element spaces are only continuous or continuously differentiable. Therefore, interpolation error must be considered. Recent analyses \cite{sanz2024analysis, papandreou2023theoretical} demonstrate that the finite element posterior does converge to the continuous posterior. However, these results are non-asymptotic in the number of data samples and do not address how the mesh resolution should scale with increasing data size.

We now describe how to incorporate discretization into the posterior contraction analysis. Let $\Pi(\cdot)$ be the prior measure and $D_N$ be some random variable that represents the acquired data. One possible route to obtain posterior contraction rates for such discretization is through the following diagram\\
\[
\begin{tikzcd}
    \Pi_h(\cdot|D_N) \arrow{r}{h\to 0} & \Pi(\cdot|D_N) \arrow{d}{N\to \infty} \\
     & \delta(u^\dagger),
  \end{tikzcd}
\]
where $\Pi_h(\cdot|D_N)$ is some discrete posterior measure and $\delta(u^\dagger)$ is the Dirac measure centred at the ground truth $u^\dagger$. This diagram means that we aim at finding some discrete measure that approximates the continuous posterior measure. Since the continuous posterior measure contracts to the ground truth, the discrete measure also does. References \cite{sanz2024analysis,marzouk2009stochastic,papandreou2023theoretical} give bounds for the first asymptotic $h\to 0$ in this route. However, convergence rates for the second asymptotic $N\to \infty$ may be hard to obtain. When the discrete space is not a subspace of the Cameron–Martin space, the interpolation error of the prior (covariance operator) is hard to control, since they are often defined through 'smoothing operators', such as Mat\'ern type operators, or through a spectral basis. These priors are usually incompatible with finite element spaces. This can lead to a suboptimal convergence (requiring very fine mesh for large $N$) or even no convergence for the discrete measure. Finding a framework where all the asymptotics are respected independently typically necessitates priors that are difficult to implement in practice, for instance defined on Besov spaces, see  \cite{LSS09}.

In this article, we use an alternative route to derive posterior contraction rates for discrete measures. We (in principle) do not need to assign a probability measure to the whole continuous space, but only some finite dimensional space $V_h$. Let $\Pi_h(\cdot)$ be some prior measure on $V_h$, consider the following diagram
\[
\begin{tikzcd}
    \Pi_h(\cdot|D_N) \arrow{r}{N\to \infty} & \delta(u_h^\dagger) \arrow{d}{h\to 0} \\
     & u^\dagger,
  \end{tikzcd}
\]
where $u_h^\dagger$ is some deterministic interpolant of $u^\dagger$ in $V_h$. Because the second step is non‑random, we gain flexibility in constructing priors in $V_h$, and we avoid the assumption that the true solution $u^\dagger$ lies in the Cameron-Martin space (\cref{prior}). The trade-off is a controlled and quantifiable bias introduced by the interpolation process. We show that, with an appropriate coupling of $h$ and $N$, we obtain nearly the same contraction rate as the posterior contraction theory constructed in \cite{nickl2023bayesian}.

\subsection{Notations.} Let $\alpha\ge 0$, $C^\alpha(\Omega)$ ($C_0^\alpha(\Omega)$) be the space of $\alpha$-H\"older continuous functions in $\Omega$ (with compact support). Let $C^\infty(\Omega)$ ($C_0^\infty(\Omega)$) be the space of infinitely differentiable functions (with compact support)  in $\Omega$  Let $H^\alpha(\Omega)$ be the standard Sobolev space with square-integrable derivatives of order less than or equal to $\alpha$ and $H_0^\alpha(\Omega)$ be the completion of $C_0^\infty(\Omega)$ in $H^\alpha(\Omega)$. Let $H^0(\Omega) = L^2(\Omega)$ and $H^{-\alpha}(\Omega)$ be the dual of $H^{\alpha}_0(\Omega)$. For a Hilbert space $H$, we use $\langle\cdot,\cdot\rangle_H$ and $||\cdot||_H$ to represent its inner product and norm. For two Banach spaces $A$ and $B$, we write $A\hookrightarrow B$ as $A$ continuously embedded in $B$. We use $\mathbb{E}^X$, $P_X$ to represent the expectation and probability measure with respect to the random variable $X$. We use $\mathbb{E}(\cdot|\cdot)$ and $P(\cdot|\cdot)$ to represent the conditional expectation and probability, respectively.

\subsection{Contribution and outline of the paper}
In \cref{sec:problem setting and posterior}, we review the posterior contraction theory constructed in \cite{nickl2023bayesian} for Bayesian data assimilation problems and introduce our model problem--the elliptic data assimilation problem using unique continuation, in \cref{sec:elliptic ds}. This is a strongly ill-posed linear inverse problemand challenging already in the deterministic framework. As per our discussion above, it may be difficult to find a covariance operator for this problem that can be optimally interpolated into finite element spaces.

In \cref{sec:conforming space}, we present a posterior contraction result for conforming discretization, that is, the discrete space is assumed to be a subspace of the Cameron-Martin space. We show that the posterior mean (or the MAP estimator) converges to the ground truth by controlling that the dimension of the discrete space is not too high so that it avoids the problem of \textit{overfitting}. A quantitative bound is obtained by setting the dimension of the discrete space $n$ no larger than $N^{\frac{d}{2\alpha+d}}$ asymptotically, whenever $\Omega\subset \mathbb{R}^d$ and the ground truth $u^\dagger\in H^\alpha(\Omega)$. We also discuss the restriction of this method as potential difficulties may arise in categorizing the discrete space.

In \cref{sec: pos cons fem}, we present our main results. Here we use priors that are defined on finite element spaces (that may not be a discretized version of any continuous prior) to obtain the posterior contraction. Although this method does not require a direct application of posterior contraction theory at the continuous level (\cref{consistency}), we obtain nearly the same convergence rate suggested by the continuous theory. The contribution of this development is two-fold. First, it provides a guideline of how to find a discrete posterior measure that contracts to the ground truth solution for Bayesian data assimilation. Second, it extends the continuous posterior contraction theory in the sense that the true solution needs not live in the support of the prior measure, and therefore can be less smooth than what is required by \cref{prior}.

\section{Problem setting and posterior consistency}\label{sec:problem setting and posterior}

\subsection{Problem setting for data assimilation problems with PDE constraints}\label{sec:problem setting}
Let $\Omega\subset \mathbb{R}^d$ be some bounded (possibly space-time) domain. Let $\mathcal{H}_\Omega$ be some Hilbert space of functions defined on $\Omega$. Let $\lambda$ be a probability measure on $\mathbb{R}^d$ with support $\omega\subset \Omega$, where $\omega$ is a precompact subdomain of $\Omega$. Moreover, suppose that $\lambda$ is equivalent to the Lebesgue measure in $\omega$. Let $\{X^{(i)}\}_{i=1}^N$ be a series of i.i.d. random variables with probability law $\lambda$. The function of interest $u\in \mathcal{H}_\Omega $ is measured on the finite set $\mathcal{M}:=\{X^{(i)}\}_{i=1}^N $ with an additive Gaussian noise $\eta$:
\begin{equation}\label{observationmodel}
\begin{array}{cc}
      y = u(X_N)+\eta, & \eta\sim \mathcal{N}(0,\Sigma),
      \end{array}
\end{equation}
where $X_N = (X^{(1)}, X^{(2)}, ..., X^{(N)})^T$, and $\Sigma\in \mathbb{R}^{N\times N}$ is a positive-definite matrix. Our aim is to reconstruct the ground truth $u$, not only in the measurement set $\mathcal{M}$, but the whole domain $\Omega$. We assume that the function of interest is governed by some physical laws described by Partial Differential Equations (PDEs). Let $F$ be some separable Hilbert space. We characterize the PDE model by some linear differential operator $\mathcal{L}:\mathcal{H}_\Omega\to F$ such that 
\begin{equation}\label{solution}
    \mathcal{L}u = f.
\end{equation} 
The PDE system is usually incomplete in terms of missing boundary/initial conditions or parameters, so one cannot reconstruct $u$ solely from $f$. We denote the missing boundary/initial conditions or parameters by $\theta\in \Theta$ for some Hilbert space $\Theta$, and assume there is an affine solution operator $\mathcal{S}:\Theta\to \mathcal{H}_\Omega$, such that
\begin{equation*}
    u = \mathcal{S}(\theta).
\end{equation*} 

There are two approaches to reconstruct the quantity $u$. One is to directly infer $u\in \mathcal{H}_\Omega$ by setting some prior measure in the 'general solution' space $S\subset \mathcal{H}_\Omega$, where $S$ is defined by
\begin{equation*}
    S:= \{u\in \mathcal{H}_\Omega:\mathcal{L}u=f\}.
\end{equation*}
Note that $S$ may not be linear but always affine. This means we can decompose $u = u_1+u_f$ with some known $u_f$ (such $u_f$ can be derived from some well-posed PDE problem with differential operator $\mathcal{L}$), and infer $u_1\in S_0$, where
\begin{equation*}
    S_0:= \{u\in \mathcal{H}_\Omega:\mathcal{L}u=0\},
\end{equation*}
since $S_0$ is usually a subspace of $\mathcal{H}_\Omega$ for a large class of PDE problems. Now through the Bayesian approach \cite{kaipio2006statistical,sanz2018inverse}, formally we have
\begin{equation}\label{bayes}
    \pi_{post} \propto \pi_{like}\cdot \pi_{prior}.
\end{equation}
with
\begin{equation*}
    \pi_{like}(y|u, X_N) \propto \exp(-\frac{1}{2}||y-u|_\omega||^2_{\Sigma^{-1}}),
\end{equation*}
where the norm $||\cdot||_{\Sigma^{-1}}$ is weighted by the covariance matrix, i.e.
\begin{equation*}
    ||z||^2_{\Sigma^{-1}}:= \sum_{i=1}^N\sum_{j=1}^Nz(X^{(i)})\Sigma^{-1}_{ij}z(X^{(j)}).
\end{equation*}
In addition, we assume that the prior measure $\Pi\sim \mathcal{N}(u_0, \mathcal{C}_0)$ is Gaussian and independent with $\{X^{(i)}\}$'s and $\eta$, where $\mathcal{C}_0:S_0\to S_0$ is a trace-class covariance operator and $u_0\in \textup{Im}(\mathcal{C}_0^\frac{1}{2})$. We define the norm on the Cameron--Martin space $E= \textup{Im}(\mathcal{C}_0^\frac{1}{2})$ by
\begin{equation*}
    ||\cdot||^2_{\mathcal{C}_0^{-1}} = \langle\mathcal{C}_0^{-\frac{1}{2}}\cdot, \mathcal{C}_0^{-\frac{1}{2}}\cdot\rangle_{\mathcal{H}_\Omega}
\end{equation*}
Then formally we have
\begin{equation}\label{inferu}
    \pi_{post}(u|y,X_N) \propto \exp(-\frac{1}{2}||y-u|_\omega||^2_{\Sigma^{-1}} - \frac{1}{2}||u - u_0||_{\mathcal{C}_0^{-1}}^2).
\end{equation}

Another possible formulation is to write $u = \mathcal{S}(\theta)$ and set some prior measure $\mathcal{N}(\theta_0,\mathcal{C}_1)$ on $\Theta$. Then the observation model can be written as 
\begin{equation}\label{bayesianobservation}
    y = \mathcal{G}(\theta)(X_N)+\eta,
\end{equation}
where $\mathcal{G}$ is defined by $\mathcal{G}(\theta) := \mathcal{S}(\theta)|_\omega$. The formulation (\ref{bayesianobservation}) is the standard observation model in Bayesian inverse problems (c.f. \cite{stuart2010inverse}). By the same argument as above, we obtain a similar formula for the posterior distribution
\begin{equation}\label{infertheta}
     \pi_{post}(\theta|y,X_N) \propto \exp(-\frac{1}{2}||y-\mathcal{S}(\theta)|_\omega||^2_{\Sigma^{-1}} - \frac{1}{2}||\theta - \theta_0||_{\mathcal{C}_1^{-1}}^2).
\end{equation}

A rigorous way of characterizing  Bayes' rule in the Hilbert space setting is through the prior and posterior measures and representing the likelihood function as a Radon--Nikodym derivative. Let $D_N = \{(X^{(i)}, y^{(i)})\}_{i=1}^N$, then we have 
\begin{equation*}
    \textup{d}\Pi(\cdot|D_N) \propto \pi_{like}\textup{d}\Pi(\cdot).
\end{equation*}

The standard way of extracting information from the posterior measure $\Pi(\cdot|D_N)$ is through computing the posterior mean or the \textit{maximum a posteriori} (MAP) estimator (they are the same in linear/affine problems). Taking formulation (\ref{infertheta}) as an example, such method computes $\theta$ that maximizes the posterior distribution, i.e., let
\begin{equation}\label{penalizedls}
    \mathcal{J}(\theta,y) = \frac{1}{2}||y-\mathcal{S}(\theta)|_\omega||^2_{\Sigma^{-1}}+\frac{1}{2}||\theta-\theta_0||_{\mathcal{C}_1^{-1}}^2.
\end{equation}
We aim at minimizing the cost functional $\mathcal{J}(\theta,y)$ for $\theta\in \text{Im}(\mathcal{C}_0^\frac{1}{2})\subset \Theta$.

\subsection{Posterior consistency}\label{sec:posterior consistency}
In this section, we introduce the general theory of posterior consistency. The theory is established in the recent comprehensive work \cite{nickl2023bayesian} that essentially works for both linear and non-linear Bayesian inverse problems. In this section, we simply write $L^2(\Omega)$ or $H^\alpha(\Omega)$ as $L^2$ or $H^\alpha$ for simplicity. We first give the definition of posterior consistency that we are interested in, which is analogous to \cite[Definition 1]{vollmer2013posterior}.
\begin{definition}
    Let $d$ be a pseudometric in some parameter space $V$ (e.g. $V = \mathcal{H}_\Omega$ or $\Theta$). Consider the observation model with additive noise
    \begin{equation*}
    \begin{array}{cc}
        y = \mathcal{G}(v)(X_N)+\eta, & \eta\sim \mathcal{N}(0,\Sigma).
        \end{array}
    \end{equation*}
    Denote $B^d_r (v)\subset V$ the $d$-ball centered at $v$ with radius $r$, $D_N = \{(X^{(i)}, y^{(i)})\}_{i=1}^N$ the data generated by the ground truth $v^\dagger$, and $P_{D_N}$ the law of $D_N$. \\
    
    (1) If there exists some sequence $\delta_N\downarrow 0$ and $l_n\downarrow 0$ as $N\to \infty$ and some constant M, such that
    \begin{equation}\label{largesamplelimit}
        \lim_{N\to\infty} P_{D_N}(\Pi(B^d_{M\delta_N}(v^\dagger)|D_N)<1-l_n) = 0,
    \end{equation}
    then we say the posterior measure $\Pi(\cdot|D_N)$ is consistent with $v^\dagger$ and the pseudometric $d$ in the \textit{large sample limit}, with convergence rate $\delta_N$.\\
    
    (2) Let $\Sigma = \epsilon \Sigma_0$ for some fixed $\Sigma_0$. If there exists $N = N(\epsilon)$, $\delta_\epsilon\downarrow 0$ and $l_\epsilon\downarrow 0$ as $\epsilon\to 0$ such that 
    \begin{equation}\label{smallnoiselimit}
        \lim_{\epsilon\to 0}P_{D_N}(\Pi(B^d_{M\delta_\epsilon}(v^\dagger)|D_N)<1-l_\epsilon) = 0,
    \end{equation}
    then we say the the posterior measure $\Pi(\cdot|D_N)$ is consistent with $v^\dagger$ and the pseudometric $d$ in the \textit{small noise limit}, with convergence rate $\delta_\epsilon$.
\end{definition}
We now elaborate on this definition. Assume that the ground truth $u^\dagger$ generates the model (\ref{observationmodel}). The posterior consistency in the large sample limit means that, once we obtain more and more samples $\{X^{(i)}\}$, the posterior measure has a high probability (with respect to the law of samples $\{X^{(i)}\}$ and noise $\eta$) to concentrate at $u^\dagger$. The posterior consistency in the small noise limit means that, when the observational noise $\eta$ vanishes, the posterior measure also has a high probability to concentrate at $u^\dagger$, given that we have $N(\epsilon)$ samples. $N(\epsilon)$ usually goes to $\infty$ as $\epsilon\to 0$, as pointwise evaluation should be sufficiently close to $L^2$ evaluation of functions. Since we need to compute the MAP estimator or posterior mean, contractions of such estimators can be categorized in the following sense
\begin{equation*}
    \lim_{N\to\infty} P_{D_N}(||E^\Pi(v|D_N)-v^\dagger||_V>\delta_N) = 0,
\end{equation*}
and 
\begin{equation*}
     \lim_{\epsilon\to 0} P_{D_N}(||E^\Pi(v|D_N)-v^\dagger||_V>\delta_\epsilon) = 0.
\end{equation*}

The behavior of posterior consistency and contraction rates depends critically on the assumptions made about the prior $\Pi$ and the forward operator $\mathcal{G}$. We follow the assumptions in \cite{nickl2023bayesian} to build them for a large class of problems arising in PDE scenarios. Throughout the subsection we make the assumption that the observational noise $\eta$ is i.i.d. standard Gaussian, namely,
\begin{equation}
    \eta\sim \mathcal{N}(0,  I_N),
\end{equation}
where $I_N\in \mathbb{R}^{N\times N}$ is the identity matrix.
\begin{assum}[Forward conditional stability estimates]\label{forward stability}
    Let $V \hookrightarrow L^2$, and $\kappa\ge0$. If for some $M>0$
    then there exists some $U,L>0$ depending on $M$, such that
    \begin{equation}\label{Linftyestimate}
        \sup_{||v||_V\le M}||\mathcal{G}(v)||_{L^\infty}\le U,
    \end{equation}
    and
    \begin{equation}\label{L2estimate}
    \begin{array}{cc}
        ||\mathcal{G}(v_1)-\mathcal{G}(v_2)||_{L^2} \le L||v_1-v_2||_{(H^\kappa)^*}, & \textup{if $||v_1-v_2||_V\le M$}.
        \end{array}
    \end{equation}
\end{assum}

\begin{assum}[Backward conditional stability estimate]\label{backward stability}
  Suppose the ground truth is $v^\dagger$. Let $|\cdot|_V$ be some semi-norm defined on $V$. There exists some constant $\tau\in (0,1)$, for any $M>0$ such that $||v||_V\le M$, there exists some constant $C_M>0$ depending on $M$, such that
  \begin{equation}\label{backwardestimate}
      |v-v^\dagger|_{V} \le C_M||\mathcal{G}(v)-\mathcal{G}(v^\dagger)||_{L^2}^\tau.
  \end{equation}
\end{assum}
The two assumptions are from \cite[Condition 2.1.1, Condition 2.1.4]{nickl2023bayesian}, and we have simplified them by assuming that $V\hookrightarrow L^2$.

\cref{forward stability} and \cref{backward stability} assert forward and backward stability estimates for the forward operator $\mathcal{G}$. The stability is called 'conditional' because estimates (\ref{Linftyestimate}), (\ref{L2estimate}) and (\ref{backwardestimate}) hold under the condition that $||v||_V$ is \textit{a priori} bounded, and stability constants can blow up when $M$ goes to infinity. The idea of conditional stability can be traced back to Tikhonov \cite{tikhonov1943stability}. Conditional stability estimates have been established for a large class of deterministic inverse problems, such as the elliptic Cauchy problem \cite{alessandrini2009stability}. Such estimates typically fall into one of three categories: Lipschitz (e.g., (\ref{L2estimate})), Hölder (e.g., (\ref{backward stability})), or logarithmic. These estimates are essential and usually contribute to the upper bound of convergence rates for statistical and numerical methods.\\

Now we specify the choice of the prior measure $\Pi$ on $V$. Let $\Pi'\sim \mathcal{N}(v_0, \mathcal{C}_0)$ be the base measure on $V$, where $\mathcal{C}_0$ is a trace-class covariance operator and $E = (\textup{Im}(\mathcal{C}_0^\frac{1}{2}), ||\cdot||_{\mathcal{C}_0^{-1}})$.
\begin{assum}\label{prior}
  Let \cref{forward stability} hold with $\kappa\ge0$. Let $\alpha>\frac{d}{2}$, and the base measure $\Pi'\sim \mathcal{N}(0, \mathcal{C}_0)$ with $E = (\textup{Im}(\mathcal{C}_0^\frac{1}{2}), ||\cdot||_{\mathcal{C}_0^{-1}})$ satisfying
    \begin{equation}
    \begin{array}{ccccc}
        E\hookrightarrow H^\alpha_c, & \textup{if $\kappa\ge\frac{1}{2}$} &\textup{or}& E\hookrightarrow H^\alpha, & \textup{if $\kappa<\frac{1}{2}$}.
        \end{array}
    \end{equation}
    The prior measure $\Pi=\Pi_N$ is rescaled according to the law of $v = N^\frac{d}{4\alpha+4\kappa+2d}v'$, where $v'\sim \Pi'$.
\end{assum}
We interpret such choice of rescaled prior. By Bayesian inversion formula, we may write it in the density form
\begin{equation*}
    \textup{d}\Pi_N(\cdot|D_N) \propto \exp(-\frac{1}{2}\sum_{i=1}^N(y^{(i)}-\mathcal{G}(v)(X^{(i)}))^2-\frac{N^\frac{d}{2\alpha+2\kappa+d}}{2}||v||_E^2).
\end{equation*}
Consider the potential with a $\frac{1}{N}$ factor
\begin{equation}\label{potential}
    \mathcal{J}(y,v) = \frac{1}{2 N}\sum_{i=1}^N(y^{(i)}-\mathcal{G}(v)(X^{(i)}))^2+\frac{1}{2N^\frac{2\alpha+2\kappa}{2\alpha+2\kappa+d}}||v||_E^2.
\end{equation}
Then it coincides with the cost function that appears in the least squares formulation with vanishing Tikhonov regularization \cite{engl1996regularization,tikhonov1943stability}. The reason for a $\frac{1}{N}$ factor is to bound the $l^2$-norm by the $L^2$-norm as $N\to \infty$ (at least for smooth functions). Note that from the Bayesian perspective, the prior rescaled with $N^\frac{d}{4\alpha+4\kappa+2d}$ strengthens the regularity of the posterior measure, compared to directly using the base prior (see explanations in \cite[p.32]{nickl2023bayesian}). However, the Tikhonov regularization term still vanishes (although at a lower rate), which reduces the stability when considering discretization.

Now let $v_{min}$ be the minimizer of (\ref{potential}). If the $y^{(i)}$'s are assumed to be exact measurements of $\mathcal{G}(v)(X^{(i)})$'s, then the classical Tikhonov regularization is expected to give the following convergence rate
\begin{equation}
    ||\mathcal{G}(v_{min})-\mathcal{G}(v^\dagger)||_{L^2} = \mathcal{O}_{v^\dagger}({N^{-\frac{\alpha+\kappa}{2\alpha+2\kappa+d}}}).
\end{equation}
The probabilistic adds-on is that such a convergence rate is valid not only for the minimizer of (\ref{potential}), but also for the posterior measure. 
\begin{theorem}\label{consistency}
    Suppose that the forward operator $\mathcal{G}$ satisfies \cref{forward stability} for some $\kappa\ge 0$. Suppose also that the prior $\Pi_N$ distribution satisfies \cref{prior} for some $\alpha>0$ and $E$. Let $\delta_N =N^{-\frac{\alpha+\kappa}{2\alpha+2\kappa+d}}$. Let $M>0$ be large enough. Define
    \begin{equation}\label{V_N}
     V_N:= \{v\in V: v = v_1+v_2, ||v_1||_{(H^\kappa)^*}\le M\delta_N, ||v_2||_{\mathcal{C}_0^{-1}}\le M, ||v||_V\le M\}.
\end{equation}
    If the ground truth $v^\dagger$ satisfies $v^\dagger\in E$, then there exists some $m>0$ and $l_N\downarrow 0$ as $N\to \infty$, such that
    \begin{equation}
        \lim_{N\to\infty}P_{D_N}\left( \Pi_N(v\in V_N: ||\mathcal{G}(v)-\mathcal{G}(v^\dagger)||_{L^2}\le m\delta_N |D_N )\le1-l_N\right)=0.
    \end{equation}
    In addition, if \cref{backward stability} holds for some semi-norm $|\cdot|_V$ and $\mathcal{G}$ with some $\tau>0$, then there exists some $L>0$ and $l'_N\downarrow 0$ as $N\to \infty$, such that
    \begin{equation}
        \lim_{N\to\infty}P_{D_N}\left( \Pi_N(v\in V_N: |v-v^\dagger|_{V}\le L\delta_N^\tau |D_N )\le1-l'_N\right)=0.
    \end{equation}
    Moreover, we have the estimate for the posterior mean
    \begin{equation}
        |\mathbb{E}^\Pi(v|D_N)-v^\dagger|_{V}=\mathcal{O}_{P_{D_N}}(\delta_N^\tau).
    \end{equation}
    This means that the posterior is consistent with respect to $v^\dagger$ in the large sample limit with convergence rate $\delta^\tau_N$.
\end{theorem}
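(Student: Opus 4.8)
The plan is to derive the statement as an instance of the general posterior‑contraction theorem of \cite{nickl2023bayesian}, the work being to verify its hypotheses for the random‑design regression model (\ref{observationmodel}) under \cref{forward stability}--\cref{prior}. Set $\rho(v,w):=\|\mathcal{G}(v)-\mathcal{G}(w)\|_{L^2}$, the natural contraction pseudometric here; by \cref{forward stability} one has $\rho(v,w)\le L\|v-w\|_{(H^\kappa)^*}$ on $V$‑balls, and the Kullback--Leibler divergence between the laws of $D_N$ generated by $v$ and by $v^\dagger$ is of order $N\rho(v,v^\dagger)^2$. Throughout, the relevant ``information budget'' is $N\delta_N^2=N^{\frac{d}{2\alpha+2\kappa+d}}$, and the purpose of the rescaling in \cref{prior} is to calibrate the Cameron--Martin norm of the prior so that all the quantities below land at this order.

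\emph{Prior mass.} The first step is to show $\Pi_N\big(v:\rho(v,v^\dagger)\le\delta_N,\ \|v\|_{L^\infty}\le U\big)\ge e^{-CN\delta_N^2}$. Since $v^\dagger\in E$, the Cameron--Martin shift inequality (Borell's inequality together with Anderson's lemma) bounds this below by $e^{-\frac12\|v^\dagger\|_{E_N}^2}$ times the centred small‑ball probability $\Pi_N(\rho(v,0)\le\delta_N)$, where $\|\cdot\|_{E_N}$ is the Cameron--Martin norm of the rescaled prior $\Pi_N$. With the rescaling of \cref{prior}, $\frac12\|v^\dagger\|_{E_N}^2\lesssim N\delta_N^2$, while the centred small‑ball exponent, estimated via (\ref{L2estimate}) through the $(H^\kappa)^*$‑small ball of the base prior and the metric entropy of $H^\alpha$‑balls in $H^{-\kappa}$ (which is $\asymp\epsilon^{-d/(\alpha+\kappa)}$), is also $\lesssim N\delta_N^2$ at scale $\epsilon=\delta_N$; the extra $L^\infty$ restriction is for free because $E\hookrightarrow H^\alpha\hookrightarrow L^\infty$ when $\alpha>d/2$.

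\emph{Sieve, tests, and the contraction statement.} With the sieve $V_N$ of (\ref{V_N}) one needs (i) an excess‑mass bound $\Pi_N(V\setminus V_N)\le e^{-(C+4)N\delta_N^2}$ and (ii) an entropy bound $\log N(\delta_N,V_N,\rho)\lesssim N\delta_N^2$. For (i), the event $\{\|v\|_V>M\}$ is controlled by Borell--TIS, and the decomposability $v=v_1+v_2$ with $\|v_1\|_{(H^\kappa)^*}\le M\delta_N$, $\|v_2\|_{\mathcal{C}_0^{-1}}\le M$ holds outside an event of size $e^{-cM^2 N\delta_N^2}$ by the standard decomposition lemma for Gaussian measures applied to $\Pi_N$; taking $M$ large gives the exponent. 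For (ii), the component $v_1$ satisfies $\rho(v_1,0)\le LM\delta_N$ and is therefore $\rho$‑invisible at scale $\delta_N$, so the $\rho$‑entropy of $V_N$ is, up to constants, that of the $E$‑ball $\{\|v_2\|_{\mathcal{C}_0^{-1}}\le M\}$, a bounded subset of $H^\alpha$, whose $H^{-\kappa}$‑entropy at scale $\delta_N$ is $\asymp\delta_N^{-d/(\alpha+\kappa)}=N\delta_N^2$. The entropy bound produces, by Le Cam's construction (Gaussian‑tail tests against fixed alternatives, a union bound over a $\delta_N$‑net of $V_N$, a peeling argument in $\rho$), tests $\psi_N=\psi_N(D_N)$ with $\mathbb{E}_{v^\dagger}\psi_N\to 0$ and $\sup\{\mathbb{E}_v(1-\psi_N):v\in V_N,\ \rho(v,v^\dagger)>m\delta_N\}\le e^{-cN\delta_N^2}$; the passage from the empirical design average $\frac1N\sum_{i}(\mathcal{G}(v)-\mathcal{G}(v^\dagger))^2(X^{(i)})$ to $\rho(v,v^\dagger)^2$ is a Bernstein inequality using the uniform bound (\ref{Linftyestimate}), which is exactly where the $\frac1N$ factor in (\ref{potential}) enters. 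Combining prior mass, excess mass and the tests in the abstract theorem of \cite{nickl2023bayesian} yields, except on a $P_{D_N}$‑null sequence of events, the first display: $\Pi_N(v\in V_N:\rho(v,v^\dagger)\le m\delta_N\mid D_N)\ge 1-l_N$.

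\emph{Backward stability and the posterior mean.} On the set $\{v\in V_N:\rho(v,v^\dagger)\le m\delta_N\}$ every $v$ obeys $\|v\|_V\le M$, so \cref{backward stability} (with the $M$ of (\ref{V_N})) gives $|v-v^\dagger|_V\le C_M(m\delta_N)^\tau=:L\delta_N^\tau$, which is the second display. For the posterior mean, convexity of $w\mapsto|w-v^\dagger|_V$ and Jensen's inequality give $|\mathbb{E}^\Pi(v|D_N)-v^\dagger|_V\le\mathbb{E}^\Pi(|v-v^\dagger|_V\mid D_N)$; splitting this expectation at the contraction set, the contribution from inside is $\le L\delta_N^\tau$ and that from the complement is the (exponentially small) posterior mass of the complement against a crude prior‑tail bound on $|v-v^\dagger|_V$, hence $o(\delta_N^\tau)$ on an event of $P_{D_N}$‑probability $\to1$; this gives $|\mathbb{E}^\Pi(v|D_N)-v^\dagger|_V=\mathcal{O}_{P_{D_N}}(\delta_N^\tau)$. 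I expect the main obstacle to be the prior side: arranging, under the rescaling of \cref{prior}, that the Cameron--Martin shift penalty, the centred small‑ball exponent and the excess‑mass exponent for $V_N$ all land at the common order $N\delta_N^2$, together with the random‑design step that makes the empirical and population $L^2$ norms agree uniformly over the sieve $V_N$ (which is not bounded in the $E_N$‑norm).
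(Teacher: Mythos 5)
Your outline is correct: the paper proves \cref{consistency} by simply citing \cite[Theorem 2.3.1]{nickl2023bayesian}, and your proposal reconstructs that theorem's argument along its standard lines (Cameron--Martin shift plus small‑ball for prior mass, Gaussian isoperimetry for the sieve's excess mass, entropy and Le Cam tests with a Bernstein step to bridge empirical and population $L^2$ in random design, then backward stability and Jensen's inequality for the posterior mean). This is the same route, merely spelled out rather than deferred to the reference.
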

\begin{proof}
    See \cite[Theorem 2.3.1]{nickl2023bayesian}
\end{proof}

\subsection{The elliptic data assimilation problem and stability}\label{sec:elliptic ds}
In this section, we introduce our model problem, that falls into the regime where the assumptions for posterior consistency in \cref{sec:posterior consistency} are valid. Following the general setting in \cref{sec:problem setting}, we let $\mathcal{H}_\Omega  = L^2(\Omega)$, $F = H^{-1}(\Omega)$, $\mathcal{D}(\mathcal{L}) = H^1(\Omega)\cap C^0(\Omega)$ and $\mathcal{L}=-\Delta:\mathcal{D}(\mathcal{L})\to F$. Therefore, the elliptic data assimilation problem is to reconstruct $u\in \mathcal{D}(\mathcal{L})$, from the observation model
\begin{equation}\label{ellipticds}
        \left\{\begin{array}{cc}
         y = u|_\omega +\eta,    & \eta\sim \mathcal{N}(0,\Sigma) \\
          f = -\Delta u + \eta',  & \eta'=0
        \end{array}\right..
    \end{equation}
Note that the boundary condition is not known, so the PDE model is not subject to a direct solution. We show that it satisfies \cref{backward stability} in the following sense.
\begin{lemma}\label{three-balls}
    Let $B\subset\Omega$ be a subdomain of $\Omega$ and the boundary of $B\backslash\omega$ does not touch the boundary of $\Omega$. Let $u\in L^2(\Omega)$. Then there exists a constant $C>0$ and $\tau\in (0,1)$ depending on the geometries, such that
    \begin{equation}\label{L2tbi}
        ||u||_{L^2(B)} \le C(||u||_{L^2(\omega)}+||\Delta u||_{H^{-2}(\Omega)})^\tau ||u||_{L^2{(\Omega)}}^{1-\tau}.
    \end{equation}
    If $u\in H^1(\Omega)$, then there exists a constant $C'>0$ and $\tau'\in (0,1)$ depending on the geometries, such that
    \begin{equation}\label{H1tbi}
        ||u||_{H^1(B)} \le C'(||u||_{L^2(\omega)}+||\Delta u||_{H^{-1}(\Omega)})^{\tau'} ||u||_{H^1{\Omega)}}^{1-\tau'}.
    \end{equation}
\end{lemma}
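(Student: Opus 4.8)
The plan is to derive both estimates from the classical three-ball (three-sphere) inequality for solutions of elliptic equations, combined with a duality/regularization argument to accommodate the fact that $\Delta u$ is only controlled in a negative Sobolev norm rather than pointwise. First I would recall the quantitative three-ball inequality for the operator $-\Delta$: for concentric balls $B_{r_1}\subset B_{r_2}\subset B_{r_3}$ compactly contained in $\Omega$, any $w\in H^1$ with $\Delta w \in L^2$ satisfies
\begin{equation*}
    \|w\|_{L^2(B_{r_2})} \le C\bigl(\|w\|_{L^2(B_{r_1})} + \|\Delta w\|_{L^2(B_{r_3})}\bigr)^{\tau}\|w\|_{L^2(B_{r_3})}^{1-\tau},
\end{equation*}
with $\tau\in(0,1)$ and $C$ depending only on the radii. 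A chain-of-balls (propagation of smallness) argument then upgrades this from concentric balls to the geometry in the statement: since $\partial(B\setminus\omega)$ does not meet $\partial\Omega$, one can cover $\overline{B}$ by a finite chain of balls starting inside $\omega$ and staying at positive distance from $\partial\Omega$, iterating the three-ball inequality along the chain; the exponents multiply and one still gets a Hölder-type bound with some (smaller) $\tau\in(0,1)$, and $\|u\|_{L^2(\omega)}$, $\|u\|_{L^2(\Omega)}$ on the right.

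The remaining issue is that \eqref{L2tbi} asks for $u\in L^2(\Omega)$ only, with $\Delta u$ measured in $H^{-2}(\Omega)$, so the three-ball inequality above does not apply directly. To handle this I would mollify: write $u_\varepsilon = \rho_\varepsilon * u$ on a slightly shrunk domain, so that $u_\varepsilon$ is smooth, $\Delta u_\varepsilon = \rho_\varepsilon * (\Delta u)$, and the elliptic three-ball estimate applies to $u_\varepsilon$. One then controls $\|u - u_\varepsilon\|_{L^2}$ and $\|\Delta u_\varepsilon\|_{L^2(B_{r_3})}$ in terms of $\varepsilon$, $\|u\|_{L^2(\Omega)}$ and $\|\Delta u\|_{H^{-2}(\Omega)}$ — the latter because convolving a distribution in $H^{-2}$ against $\rho_\varepsilon$ gains two derivatives at the cost of a factor $\varepsilon^{-2}$, giving $\|\Delta u_\varepsilon\|_{L^2}\lesssim \varepsilon^{-2}\|\Delta u\|_{H^{-2}}$ (and similarly $\|u_\varepsilon - u\|_{L^2}\lesssim$ is crude but small). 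Substituting into the three-ball inequality and optimizing over $\varepsilon$ — balancing the mollification error against the $\varepsilon^{-2}$ blow-up — yields \eqref{L2tbi} with a possibly reduced Hölder exponent $\tau$. Estimate \eqref{H1tbi} follows the same template starting from the $H^1$-version of the three-ball inequality (or by applying the $L^2$ version to $\nabla u$ componentwise together with a Caccioppoli inequality to reabsorb interior gradient terms), now mollifying with a gain of only one derivative so that $\|\Delta u_\varepsilon\|_{L^2}\lesssim \varepsilon^{-1}\|\Delta u\|_{H^{-1}}$.

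The main obstacle I anticipate is the bookkeeping in the mollification-plus-optimization step: one must keep track of how the Hölder exponent degrades through both the chain-of-balls iteration and the $\varepsilon$-optimization, and ensure the constants genuinely depend only on the geometry (the distance of $B\setminus\omega$ to $\partial\Omega$, the sizes of $\omega$ and $B$) and not on $u$. A secondary technical point is that mollification requires working on a domain slightly smaller than $\Omega$, so a preliminary reduction — shrinking $\Omega$ to $\Omega'$ with $B\setminus\omega \Subset \Omega'$ and noting $\|u\|_{L^2(\Omega')}\le\|u\|_{L^2(\Omega)}$, $\|\Delta u\|_{H^{-2}(\Omega')}\lesssim\|\Delta u\|_{H^{-2}(\Omega)}$ — is needed before the smoothing argument can even begin. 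Everything else is a routine assembly of standard elliptic unique-continuation machinery; for the three-ball inequality itself I would simply cite the literature (e.g. the references on stability for the elliptic Cauchy problem mentioned above) rather than reprove it.
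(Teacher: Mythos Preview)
Your overall architecture (three-ball inequality plus propagation of smallness) is correct and is exactly what the cited literature does; the paper itself gives no argument and simply refers to \cite{monsuur2024ultra} and \cite{alessandrini2009stability,burman2019unique}. However, the mollification-plus-optimization step you propose to pass from $\Delta u\in L^2$ to $\Delta u\in H^{-2}$ (resp.\ $H^{-1}$) has a genuine gap. To optimize over $\varepsilon$ you need a quantitative rate for $\|u-u_\varepsilon\|_{L^2(B)}$ in terms of $\varepsilon$, but for a function that is merely in $L^2(\Omega)$ mollification gives $\|u-u_\varepsilon\|_{L^2}\to 0$ with \emph{no} rate whatsoever; your phrase ``crude but small'' hides exactly this. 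With the $\varepsilon^{-2}$ blow-up on the $\|\Delta u_\varepsilon\|_{L^2}$ side and no compensating power of $\varepsilon$ on the other, the balancing argument collapses. The same obstruction appears in the $H^1$ case.

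The standard fix, and the one used in the references the paper cites, is not mollification but a harmonic lifting: solve $-\Delta u_0=-\Delta u$ in $\Omega$ with $u_0\in H_0^2(\Omega)$ (resp.\ $H_0^1(\Omega)$), so that elliptic regularity gives $\|u_0\|_{L^2(\Omega)}\lesssim\|\Delta u\|_{H^{-2}(\Omega)}$ (resp.\ $\|u_0\|_{H^1(\Omega)}\lesssim\|\Delta u\|_{H^{-1}(\Omega)}$). Then $v=u-u_0$ is harmonic, hence smooth in the interior, and the classical three-sphere inequality together with your chain-of-balls propagation applies directly to $v$ with no smoothing needed; adding $u_0$ back costs only the negative-norm term already present on the right-hand side. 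This replaces the failed $\varepsilon$-optimization by a single clean decomposition and gives the stated H\"older exponent without degradation.
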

\begin{proof}
   For the first estimate, see, \cite[Theorem 2.2]{monsuur2024ultra}. The second estimate is adapted from \cite[Theorem]{alessandrini2009stability}, which only shows the bound for $L^2(B)$ norm. However, the bound for $H^1(B)$-norm can be derived by the same process as in \cite{alessandrini2009stability}, by some three-spheres inequality. See, e.g. \cite[Corollary 3]{burman2019unique}.
\end{proof}

We now illustrate the method of using Bayesian inversion to solve (\ref{ellipticds}). As mentioned in \cref{sec:problem setting}, consider the auxiliary problem: find $u_f\in H^1_0(\Omega)$
\begin{equation*}
    \left\{\begin{array}{cc}
      -\Delta u_f =f  &  \textup{in $\Omega$} \\
       u = 0  & \textup{on $\partial\Omega$}
    \end{array}\right..
\end{equation*}
This is a well-posed PDE problem. Assume now we know $u_f$. Let $u^\dagger = u - u_f$. Then $u^\dagger\in H^1(\Omega)$ is a harmonic function. We denote $\mathcal{H}^\alpha(\Omega)$ the space of $H^\alpha(\Omega)$ harmonic functions. The observation model for $u^\dagger$ reads
\begin{equation}\label{u_1model}
\begin{array}{cc}
    y = (u^\dagger+u_f)|_\omega(X_N) + \eta, & \eta\sim \mathcal{N}(0,\Sigma).
\end{array}
\end{equation}
Since $u^\dagger$ is harmonic, it is $C^\infty$ inside $\Omega$. Moreover, since $\bar{\omega}\subset \Omega$, we have $\forall u\in \mathcal{H}^0(\Omega)$
\begin{equation*}
\begin{array}{cc}
    ||u||_{L^\infty(\omega)}\lesssim ||u||_{L^2(\Omega)},\\
||u||_{L^2(\omega)}\le ||u||_{L^2(\Omega)}.
    \end{array}
\end{equation*}
This shows that \cref{forward stability} holds for the forward map with $\kappa=0$. Since \cref{three-balls} shows the forward map also satisfies \cref{backward stability}, if in addition the prior measure satisfies \cref{prior}, then we can apply \cref{consistency} to show that the Bayesian inversion of the elliptic data assimilation problem is posteriori consistent. In fact, we have the following Corollary.
\begin{co}\label{ellipticds consistency}
Let $\Sigma = I_N$ be the identity matrix, $u^\dagger\in \mathcal{H}^\alpha(\Omega)$ with $\alpha>\frac{d}{2}$ be the ground truth in model (\ref{u_1model}), and $\mathcal{C}_0:L^2(\Omega)\to L^2(\Omega)$ be some trace-class covariance operator such that $\textup{Im}(\mathcal{C}_0^{\frac{1}{2}}) \hookrightarrow \mathcal{H}^\alpha(\Omega)$. Let $\Pi_N\sim \mathcal{N}(0, N^\frac{d}{2\alpha+d}\mathcal{C}_0)$. Then the posterior measure $\Pi_N(\cdot|D_N)$ is consistent with $u^\dagger$ and the pseudometric induced by the semi-norm $L^2(B)$ in the large sample limit with convergence rate $\delta_N = N^{-\frac{\alpha\tau}{2\alpha+d}}$, where $B$ and $\tau$ are given by \cref{three-balls}.

\end{co}

\section{Discretization with conforming subspaces}\label{sec:conforming space}
In this section, we discuss the discretization of consistent posteriors for the elliptic data assimilation problem (\ref{ellipticds}). We assume throughout the section that the problem setting is the same as that in \cref{ellipticds consistency}. Since in finite-dimensional spaces, the computation of the posterior mean and the MAP estimator is straightforward, we will mainly focus on the properties of such estimator. 

We give a convergence theorem when the discretization is conforming with respect to the continuous space that supports the prior, i.e., let $V_n\subset E$, where $E= \textup{Im}(\mathcal{C}_0^\frac{1}{2})\hookrightarrow\mathcal{H}^{\alpha}(\Omega)$. In this case, the most natural way is to set the prior in $V_n$, such that
\begin{equation}\label{conforming prior}
    \pi_0(v_n)\propto \exp(-\frac{1}{2}||v_n||_{\mathcal{C}_0^{-1}}^2).
\end{equation}
Such a prior is always well-defined since $V_n$ is finite dimensional, and it is straightforward to compute the MAP estimator in $V_n$ through the following Euler-Lagrange equation
\begin{equation}\label{conformingel}
\begin{array}{cc}
    \langle u^y_n,v_n\rangle_{I_N} + N^\frac{d}{2\alpha+d}\langle u^y_n,v_n\rangle_{\mathcal{C}_0^{-1}}= \langle y,v_n\rangle_{I_N},   & \forall v_n\in V_n .
    \end{array}
\end{equation}

We make a generic assumption that $V_n$ "converges" to $ E$ in the following sense:
\begin{assum}\label{optimal conv projection}
     Assume that $E\hookrightarrow \mathcal{H}^\alpha(\Omega)$ with $\alpha>1$. There exists a projection operator $\mathcal{I}_n:E\to V_n$, such that for any $v\in E$, we have
    \begin{equation*}
    \begin{array}{cc}
        ||v-\mathcal{I}_nv||_{L^2(\Omega)} \le \phi^\alpha(n) ||v||_{E},\\
    \end{array}
    \end{equation*}
    where $\phi(n):\mathbb{N}^+\to \mathbb{R}^+$ is a non-increasing function satisfying $\phi(n) = \mathcal{O}(n^{-\frac{1}{d}})$ asymptotically as $n\to\infty$.
\end{assum}
One could think of $V_n$ as a spectral basis of $\mathcal{H}(\Omega)$ taking the truncation at the $n$-th term. 

\begin{prop}[Galerkin orthogonality]\label{Galerkin orthogonality}
    Let the setting be the same as in \cref{ellipticds consistency}. Let $u^y$ be the posterior mean with the prior $\Pi_N$ and $u^y_n$ be the discrete posterior mean with prior defined in (\ref{conforming prior}). Then the following Galerkin orthogonality holds
    \begin{equation}
        \begin{array}{cc}
         \langle u^y-u^y_n,v_n\rangle_{I_N} + N^\frac{d}{2\alpha+d}\langle u^y-u^y_n,v_n\rangle_{\mathcal{C}_0^{-1}} = 0,    & \forall v_n\in V_n. \\ 
        \end{array}
    \end{equation}
\end{prop}
\begin{proof}
    It can be easily shown by combining the following two identities for $u$ and $u_n$, and the fact that $V_n\subset E$.
    \begin{equation*}
        \begin{array}{cc}
          \langle u^y,v\rangle_{I_N} + N^\frac{d}{2\alpha+d}\langle u^y,v\rangle_{\mathcal{C}_0^{-1}} = \langle y,v\rangle_{I_N},   & \forall v\in E, \\
            \langle u^y_n,v_n\rangle_{I_N} + N^\frac{d}{2\alpha+d}\langle u^y_n,v_n\rangle_{\mathcal{C}_0^{-1}}= \langle y,v_n\rangle_{I_N},   & \forall v_n\in V_h .
        \end{array}
    \end{equation*}
    \[\]
\end{proof}
\begin{remark}
    Note that the above argument makes sense since for linear Bayesian inverse problems with centred Gaussian priors, the posterior mean lives in the Cameron-Martin space. In our case, this means $u^y\in E\hookrightarrow H^\alpha(\Omega)$.
\end{remark}

\begin{theorem}\label{conforming posterior consistency}
    Let the setting be the same as in \cref{ellipticds consistency} and $u^\dagger\in E$ be the ground truth of the elliptic data assimilation problem (\ref{ellipticds}) and $u_n$ satisfy (\ref{conformingel}). Let \cref{optimal conv projection} hold. Assume $l_n$ is an arbitrary sequence such that $l_N\downarrow 0$  and $Nl_N\to\infty$ as $N\to\infty$. If we choose $\phi(n) =\mathcal{O}((Nl_N)^{-\frac{1}{2\alpha+d}})$, then we have
    \begin{equation*}
        ||u^y_n-u^\dagger||_{L^2(B)} = \mathcal{O}_{P_{D_N}}((Nl_N)^{-\frac{\alpha\tau}{2\alpha+d}}),
    \end{equation*}
    where $B$ and $\tau$ are given in \cref{three-balls}.
\end{theorem}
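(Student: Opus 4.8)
The plan is to view $u^y_n$ as the MAP estimator (equivalently the posterior mean) of a Bayesian inverse problem posed \emph{directly} on the finite-dimensional space $V_n$, with rescaled Gaussian prior $\mathcal N(0,N^{d/(2\alpha+d)}\mathcal C_0|_{V_n})$ and data $y=u^\dagger(X_N)+\eta$, and to run the posterior-contraction argument behind \cref{consistency} for this discrete problem; the only genuinely new feature is that $u^\dagger$ lies slightly outside the model space $V_n$. The naive route — $\|u^y_n-u^\dagger\|_{L^2(B)}\le\|u^y_n-u^y\|_{L^2(B)}+\|u^y-u^\dagger\|_{L^2(B)}$, with the second term from \cref{ellipticds consistency} and the first from \cref{Galerkin orthogonality} — does not close: $u^y_n$ is only the $a$-orthogonal projection of $u^y$, and the rescaled prior forces $\|u^y\|_E$ (hence $\|u^y_n\|_E$) to grow like $N^{\alpha/(2\alpha+d)}$, so \cref{Galerkin orthogonality} controls $u^y_n-u^y$ only in a norm too weak to survive the ill-posed forward map. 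Instead I would work with the energy estimate for $u^y_n$ and feed its output into the three-balls inequality.

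First I would record the discretization bias. With $r_n:=u^\dagger-\mathcal I_nu^\dagger$, \cref{optimal conv projection} gives $\|r_n\|_{L^2(\Omega)}\le\phi^\alpha(n)\|u^\dagger\|_E$, and since $u^\dagger,\mathcal I_nu^\dagger\in\mathcal H^\alpha(\Omega)$ are harmonic with $\bar\omega\subset\Omega$, interior elliptic regularity upgrades this to $\|r_n\|_{L^\infty(\omega)}\lesssim\|r_n\|_{L^2(\Omega)}\le\phi^\alpha(n)\|u^\dagger\|_E$, whence $\|r_n\|_{I_N}^2\le N\phi^{2\alpha}(n)\|u^\dagger\|_E^2$. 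The coupling $\phi(n)=\mathcal{O}((Nl_N)^{-1/(2\alpha+d)})$ is equivalent to $n\asymp(Nl_N)^{d/(2\alpha+d)}\ll N$, makes this bias $o(\sqrt N)$, and gives $\phi^\alpha(n)=\mathcal{O}((Nl_N)^{-\alpha/(2\alpha+d)})$. Testing the minimality of $u^y_n$ against the competitor $\mathcal I_nu^\dagger\in V_n$ (equivalently, subtracting the Euler-Lagrange identities and invoking \cref{Galerkin orthogonality}), with $w_n:=u^y_n-\mathcal I_nu^\dagger\in V_n$, one arrives at
\[
\tfrac1N\|u^y_n-u^\dagger\|_{I_N}^2+N^{-\frac{2\alpha}{2\alpha+d}}\|u^y_n\|_E^2\;\lesssim\;N^{-\frac{2\alpha}{2\alpha+d}}\|u^\dagger\|_E^2+\tfrac1N\|r_n\|_{I_N}^2+\tfrac1N\sum_{i=1}^N\eta^{(i)}w_n(X^{(i)}),
\]
using the $E$-boundedness of $\mathcal I_n$ for the first term. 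The Gaussian term is handled by Cauchy--Schwarz after projecting $\eta\in\mathbb R^N$ onto $\{v_n(X_N):v_n\in V_n\}$, a subspace of dimension $\le n$ — that projection's squared norm is a $\chi^2$ variable with at most $n$ degrees of freedom, hence $\mathcal{O}_{P_{D_N}}(n)$ — followed by a Young split absorbing $\|w_n\|_{I_N}$ into $\|u^y_n-u^\dagger\|_{I_N}^2$ and $\|r_n\|_{I_N}^2$. With the bias bound and the coupling, every right-hand term is $\mathcal{O}_{P_{D_N}}((Nl_N)^{-2\alpha/(2\alpha+d)})$; this is exactly where $n\ll N$ is used, a larger $n$ letting the $n/N$ noise term dominate, i.e.\ the overfitting regime.

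It remains to pass from the empirical norm to the population norm. Since the $X^{(i)}$ are i.i.d.\ with law $\lambda$ equivalent to Lebesgue on $\omega$ and $n\ll N$, a uniform Bernstein/chaining estimate over the $n$-dimensional ellipsoid $\{v_n\in V_n:\|v_n\|_{L^2(\omega)}\le1\}$ (with a Sobolev/inverse bound on $V_n\subset\mathcal H^\alpha$) gives $\|v_n\|_{L^2(\omega)}^2\le2\,\tfrac1N\|v_n\|_{I_N}^2$ for all $v_n\in V_n$ on an event of probability $\to1$; applied to $w_n$ together with $\|r_n\|_{L^2(\omega)}\le\|r_n\|_{L^2(\Omega)}$ this yields $\|\mathcal G(u^y_n)-\mathcal G(u^\dagger)\|_{L^2(\omega)}=\|u^y_n-u^\dagger\|_{L^2(\omega)}=\mathcal{O}_{P_{D_N}}((Nl_N)^{-\alpha/(2\alpha+d)})$. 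Finally $u^y_n\in V_n\subset E\hookrightarrow\mathcal H^\alpha(\Omega)$ and $u^\dagger$ are harmonic, so $\Delta(u^y_n-u^\dagger)=0$ and \cref{three-balls} gives
\[
\|u^y_n-u^\dagger\|_{L^2(B)}\;\lesssim\;\|u^y_n-u^\dagger\|_{L^2(\omega)}^{\tau}\,\|u^y_n-u^\dagger\|_{L^2(\Omega)}^{1-\tau};
\]
combined with $\|u^y_n-u^\dagger\|_{L^2(\Omega)}=\mathcal{O}_{P_{D_N}}(1)$ this is the asserted rate $(Nl_N)^{-\alpha\tau/(2\alpha+d)}$ (and there is nothing further to do for the mean, which \emph{is} $u^y_n$ in the affine-Gaussian setting).

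I expect two steps to carry the real weight. The first is the uniform empirical-to-population conversion over $V_n$: this quantitative ``no overfitting'' step is precisely what pins the dimension bound $n\lesssim(Nl_N)^{d/(2\alpha+d)}$, and it is where the empirical-process machinery underlying \cref{consistency} is genuinely used. The second is the clean a priori bound $\|u^y_n\|_{L^2(\Omega)}=\mathcal{O}_{P_{D_N}}(1)$: the crude energy estimate only yields $\|u^y_n\|_E^2=\mathcal{O}_{P_{D_N}}(l_N^{-2\alpha/(2\alpha+d)})$, and since the rescaled prior permits a large Cameron-Martin norm while $E\hookrightarrow L^2$ points the wrong way, removing the spurious power of $l_N$ in the stability-loss exponent requires the \emph{localized} form of the contraction argument (posterior concentrated on $\{\|v\|_V\le M\}$). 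That argument carries over to the discrete problem once one verifies that the rescaled discrete prior still meets \cref{prior} — $E_n=(V_n,\|\cdot\|_{\mathcal C_0^{-1}})\hookrightarrow\mathcal H^\alpha$ being inherited from $E$ and $\mathcal C_0|_{V_n}$ finite rank hence trace-class — and that \cref{forward stability} and \cref{backward stability} hold for $\mathcal G|_{V_n}$ with $\mathcal I_nu^\dagger$ playing the role of the ground truth (both following from harmonicity, since $u^y_n-\mathcal I_nu^\dagger$ is harmonic so \cref{three-balls} applies around $\mathcal I_nu^\dagger$). The interpolation estimate, the $\chi^2$ control of the noise projection, and the bookkeeping of the coupling exponents are then routine.
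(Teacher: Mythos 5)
Your proposal is essentially sound and follows the same broad arc as the paper's proof (Appendix~\ref{proofsec3}): an energy estimate against the competitor $\mathcal I_n u^\dagger$, a uniform empirical-to-population norm conversion over the $n$-dimensional model set using a covering/Bernstein argument, and then the three-balls inequality (\cref{three-balls}) with an $\mathcal O(1)$ bound on the $L^2(\Omega)$ norm. The coupling $n\asymp(Nl_N)^{d/(2\alpha+d)}$ and the $\chi^2$-type $\mathcal O_{P}(n)$ control of the projected noise are exactly what drive the rate in both treatments.

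The one genuine divergence is organizational, and it matters more than you give it credit for. You run a single energy estimate on $u^y_n-\mathcal I_n u^\dagger$ and then correctly observe that the crude output is $\|u^y_n\|_E^2=\mathcal O_{P}(l_N^{-2\alpha/(2\alpha+d)})$, which ruins the a priori $L^2(\Omega)$ bound needed for \cref{three-balls}; you propose to rescue this by importing the full localized posterior-contraction machinery of \cref{consistency} to the discrete prior (verifying \cref{forward stability}--\cref{prior} on $V_n$ and localizing to the analogue of the set $V_N$). That is a legitimate route, but it is considerably heavier than what the paper does. The paper instead introduces the conditional mean $u_n=\mathbb E^y(u^y_n\,|\,X_N)$ as an intermediary and splits $u^y_n-u^\dagger=(u^y_n-u_n)+(u_n-u_n^\dagger)+(u_n^\dagger-u^\dagger)$. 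Because $u_n$ satisfies a deterministic Euler--Lagrange identity driven by $u^\dagger$ rather than $y$, testing with $u_n-u_n^\dagger$ yields $\delta_N^2\|u_n-u_n^\dagger\|_E^2\lesssim\tfrac1N\|u^\dagger-u_n^\dagger\|_{I_N}^2+\delta_N^2\|u^\dagger\|_E^2$, and after the Chebyshev estimate on the first term (the analogue of \cref{montecarloconv}) this gives $\|u_n-u_n^\dagger\|_E=\mathcal O_{P}(1)$ directly --- no localization argument is needed. The stochastic piece $u^y_n-u_n$ is then bounded by a trace computation (as in \cref{variancebound}) which gives $\|u^y_n-u_n\|_E=o_P(1)$. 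So the conditional-mean split is precisely the device that replaces your ``localized contraction'' step with two lines of algebra. Apart from this, the covering-net argument you sketch for $\|v_n\|_{L^2(\omega)}^2\lesssim\tfrac1N\|v_n\|_{I_N}^2$ on the event of high probability is exactly Step~2 of \cref{meanerroru_n} (the metric entropy bound and Bernstein's inequality on the net), and the final three-balls step is identical. Your $\chi^2$ bound on the noise projection is a valid alternative to the paper's trace identity in \cref{varianceu_n}; both yield $\mathcal O_P(n/N)$.

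In short: correct in substance, but you have reinvented the harder path to the a priori $L^2(\Omega)$ bound; adopting the conditional-mean decomposition removes the need for the discrete localization argument and makes the proof close with only elementary estimates.
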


We postpone the proof of \cref{conforming posterior consistency} to \cref{proofsec3}. If we are able to explicitly use $V_n$, then by \cref{conforming posterior consistency}, we could achieve the optimal convergence rate suggested by \cref{consistency}. Typical situations include having a global Hilbert basis for $E$. In this case, \cref{conforming posterior consistency} shows the validity of spectral methods.

Despite its theoretical succinctness, this method has many practical constraints, especially when a global basis is not available. Computing a global basis for non-trivial geometry requires numerical methods and will be subject to discretization error and loss of smoothness. In addition, $V_n$ can never be a finite element space, since we require $v_n\in V_n$ to be pointwise harmonic. There can be some work-around for computing such global basis (which is equivalent to computing fundamental solutions to Poisson's equations). See, for example \cite{barnett2008stability}. Moreover, even if for problems where $V_n$ needs not be a subspace of harmonic functions and thus $V_n$ can be a finite element space, the regularity requirement that $V_n\subset H^\alpha(\Omega)$ is prohibitive due to the complexity of inter-element continuity constraints.

\section{Posterior consistency with finite element priors}\label{sec: pos cons fem}
In this section, we introduce an alternative method to derive posterior contraction rates in some finite element space $V_h$ (usually piecewise polynomials) for our model problem (\ref{ellipticds}), without using the posterior consistency theorem \cref{consistency}. Note that in most cases, we have $V_h\not\subset E$ and thus the prior covariance operator has to be treated carefully in $V_h$. The idea follows the standard Galerkin approximation, i.e., let $\mathcal{I}_h:V\to V_h$ be some projection from the continuous space to the discrete space, we want our discrete solution $u_h$ to satisfy the bound in the following sense:
\begin{equation}\label{galerkin}
    ||u_h-\mathcal{I}_hu^\dagger||_{V} \lesssim ||u^\dagger-\mathcal{I}_hu^\dagger||_V,
\end{equation}
up to some probabilistic conversion. Since the right-hand side converges to $0$ with mesh refinement, we would have posterior consistency directly by the triangle inequality, possibly with some $h = h(N)\downarrow 0$ as $N\to \infty$. The advantage of such method is that the left-hand side of (\ref{galerkin}) only lives in the discrete space, which makes it possible for us to choose a prior defined in the discrete space and avoid difficulties that may arise for probability measures in Hilbert spaces. As we will see, using this method, we can relax \cref{forward stability}, and \cref{prior} by not requiring the prior to live in $H^\alpha(\Omega)$ with $\alpha>\frac{d}{2}$, but only $\alpha>1$. \\

\subsection{Discrete posterior consistency}First, we introduce the discrete setting. Let $\mathcal{T}_h$ be a family of shape regular and quasi-uniform tessellation of $\Omega$ in non-overlapping simplices (for definitions, see, e.g. \cite{ern2021finite}). Let $\omega\subset \Omega$ be the union of a subset $\mathcal{T}_\omega\subset\mathcal{T}_h$. For any two different simplices $K,K'\in\mathcal{T}_h$, $K\cap K'$ consists of either empty set, a common face/edge or vertex. Let the diameter of a simplex $K$ be $h_K$ and its outward normal vector be $n_K$. The global mesh parameter is defined as $h = \max_{K\in\mathcal{T}_h}h_K$. Let $\mathcal{F}$ be the collection of all $d-1$ dimensional facets and $\mathcal{F}_I$ be the collection of all interior $d-1$ dimensional facets. For any facet $F\in\mathcal{F}$, denote $n_F$ the normal vector on $F$, whose direction is arbitrary but fixed. Let $k\ge1$ denote the degree of polynomial approximation and let $\mathbf{P}_k(K)$ be the space of polynomials of degree at most $k$ defined in $K$. Define the $H^1$-conforming finite element space
$$ V^k_h :=\{v_h\in H^1(\Omega):v_h|_K\in \mathbf{P}_k(K), \forall K \in \mathcal{T}_h \}$$
and its subspace with homogeneous boundary conditions
$$W^k_h := V^k_h \cap H^1_0(\Omega).$$
These $H^1$ conforming spaces are well-defined for $k\ge 1$ through e.g., Lagrange finite elements (see \cite[Chapter 19]{ern2021finite}). For $v_h\in V^k_h$, we denote the jump function of its normal gradient across $F\in \mathcal{F}_i$ by 
\begin{equation*}
\llbracket\nabla v_h\cdot n\rrbracket_F :=\nabla v_h\cdot n_1|_{K_1} + \nabla v_h\cdot n_2|_{K_2},
\end{equation*}
where $K_1,K_2\in\mathcal{T}$ are two elements such that $K_1\cap K_2=F$, and $n_1,n_2$ the outward pointing normals with respect to $K_1$ and $K_2$. For all $ v_h \in V^k_h$, define 
\begin{equation*}
    J_h(u_h,v_h) = \sum_{F\in \mathcal{F}_I}\int_Fh\llbracket\nabla u_h\cdot n\rrbracket_F\cdot\llbracket\nabla v_h\cdot n\rrbracket_FdS.
\end{equation*}
We also define the projection of $f\in H^{-1}(\Omega)$ to $W^k_h$ by 
\begin{equation}\label{f_h}
    \langle f_h,w_h\rangle_{L^2(\Omega)} = f(w_h).
\end{equation}
Let $V_h = V_h^k$ be the discrete space with which we will work. We will specify $k$ whenever necessary. Let $W_h = W^1_h$. Define the norms $||\cdot||_{V_h}$ ($||\cdot||_{W_h}$) on $V_h$ ($W_h$)
\begin{equation}\label{discrete norms}
\begin{array}{cc}
      ||v_h||_{V_h}^2 := J_h(v_h,v_h)+ ||h^{(\alpha-1)} v_h||^2_{H^1(\Omega)}, \\
   ||w_h||_{W_h}^2: = J_h(w_h,w_h)+\int_{\partial \Omega}h|\nabla w_h\cdot n|^2dS + ||h w_h||^2_{H^1(\Omega)}. 
\end{array}
\end{equation}
We also define the $W_h^*$-norm of $\Delta v$ with $v\in H^1(\Omega)$ by
\begin{equation}\label{W_h^*}
    ||\Delta v||_{W_h^*} := \sup_{w_h\in W_h\backslash\{0\}}\frac{a(v,w_h)}{||w_h||_{W_h}}.
\end{equation}

Note that since $V_h\subset H^1(\Omega)$, the above definition is valid for all $v_h\in V_h$. Let $X_N = \{X^{(i)}\}_{i=1}^N$ and consider the observational model
\begin{equation}\label{model assumption}
    y = u|_\omega (X_N) + \sigma^2\eta,
\end{equation}
with $\eta\sim \mathcal{N}(0,\Sigma)$ where the eigenvalues of $\Sigma$ locate in $[\sigma^2_{min},\sigma^2_{max}]$ uniformly with respect to $N$.
We now give priors in $V_h$ that depends on the discrete space.
\begin{equation}\label{discrete prior}
    \log\pi_0(u_h) \propto \frac{N}{2\sigma^2}\left(||hu_h||_{V_h}^2+\sum_{K\in \mathcal{T}_h}||h^2(-\Delta u_h-f_h)||^2_{L^2(K)}+ ||h(-\Delta u_h-f)||^2_{W_h^*}\right),
\end{equation}
and
\begin{equation}\label{discrete prior H1}
    \log\pi_1(u_h) \propto \frac{N}{2\sigma^2}\left(||u_h||_{V_h}^2+\sum_{K\in \mathcal{T}_h}||h(-\Delta u_h-f_h)||^2_{L^2(K)}+ ||-\Delta u_h-f||^2_{W_h^*}\right),
\end{equation}
where $f_h$ is defined in (\ref{f_h}). One can see that $\log\pi_0(u_h)$ and $\log\pi_1(u_h)$ only differ by the scale $h^2$. We will use $\pi_0$ for optimal contraction of the posterior under $L^2(B)$ semi-norm, and $\pi_1$ for that under $H^1(B)$ semi-norm (c.f. \cref{three-balls}). Since we are now working in the finite-dimensional space $V_h$, the above definitions indeed give legit Gaussian priors. In this case, the logarithm of posterior density $\log\pi(u_h|y)$ is proportional to the following quadratic form
\begin{equation}\label{costfunctional}
\begin{aligned}
   &\frac{1}{2N}||u_h-y||^2_{\Sigma^{-1}}\\
   +&\frac{1}{2}\left(||h^{\beta}u_h||_{V_h}^2+\sum_{K\in \mathcal{T}_h}||h^{1+\beta}(-\Delta u_h-f_h)||^2_{L^2(K)}+ ||h^\beta(-\Delta u_h-f)||^2_{W_h^*}\right),
    \end{aligned}
\end{equation}
where $\beta = 0,1$. We now interpret the choice of our priors. The $||\cdot||_{V_h}$ norm in the prior works as the penalty of regularity. The difference with imposing a trace-class covariance operator in the continuous case is that, we impose this regularity constraints in the 'weak' sense. Note that for $u\in H^{\frac{3}{2}+\epsilon}$ with any $\epsilon>0$, we have that $J_h(u,u) = 0$. The terms $\sum_{K\in \mathcal{T}_h}||h^{1+\beta}(-\Delta u_h-f_h)||^2_{L^2(K)}$ and $||h^\beta(-\Delta u_h-f)||^2_{W_h^*}$ are used to impose the PDE constraint $-\Delta u = f$ in point-wise and operator sense, which represent penalties on high frequency and low frequency modes, respectively. Since the posterior measure is only defined in the finite element space, it is straightforward to compute the MAP estimator $u_h^y\in V_h$ through the following Euler-Lagrange equation: for all $v_h\in V_h$
\begin{equation}\label{singleEL}
\begin{aligned}
    &\frac{1}{N}\langle u^y_h,v_h\rangle_{\Sigma^{-1}} + s_h(h^\beta u^y_h,h^\beta v_h) + \langle h^\beta \Delta u^y_h,h^\beta\Delta v_h\rangle_{W_h^*}\\ = &\frac{1}{N}\langle y,v_h\rangle_{\Sigma^{-1}}+\sum_{K\in \mathcal{T}_h}\langle -h^{1+\beta}f_h,h^{1+\beta}\Delta v_h\rangle_{L^2(K)}+\langle -h^\beta f,h^{\beta}\Delta v_h\rangle_{W_h^*},
    \end{aligned}
\end{equation}
where $s_h$ is a bilinear form on $V_h$ defined by 
\begin{equation}\label{s_h}
    s_h(u_h,v_h):= \langle u_h,v_h\rangle_{V_h} + \sum_{K\in \mathcal{T}_h}\langle h \Delta u_h,h\Delta v_h\rangle_{L^2(K)}.
\end{equation}
We now present our main results.
\begin{theorem}\label{Eyconv}
    Let $u^\dagger\in H^\alpha(\Omega)\cap C^0(\Omega)$ with $\alpha>1$ be the ground truth solution to the elliptic data assimilation problem (\ref{ellipticds}) and $B$ satisfies the conditions in \cref{three-balls} and $\tau,\tau'\in(0,1)$ be parameters in (\ref{L2tbi}) and (\ref{H1tbi}). Let $V_h = V_h^k$ with $k\ge\min\{\alpha-1,1\}$ and $u_h^y\in V_h$ be the solution to (\ref{singleEL}) with $\beta = 0$ or $1$. 
    
     If $\beta = 1$ and $h = h(N)\sim \mathcal{O}((\frac{\sigma^2}{N})^{-\frac{1}{2\alpha+d}})$, then for $\delta_N = (\frac{\sigma^2}{N})^{-\frac{\alpha}{2\alpha+d}}$, we have
    \begin{equation}\label{EyL2}
        \lim_{N\to\infty}P_{X_N}(\mathbb{E}^y(||u_h^y-u^\dagger||_{L^2(B)}\big|X_N)\lesssim (\delta_N)^\tau) = 1.
    \end{equation}

    Otherwise, if $\beta = 0$ and $h = h(N)\sim \mathcal{O}((\frac{\sigma^2}{N})^{-\frac{1}{2(\alpha-1)+d}})$, then for $\delta'_N = (\frac{\sigma^2}{N})^{-\frac{\alpha-1}{2(\alpha-1)+d}}$, we have
    \begin{equation}\label{EyH1}
        \lim_{N\to\infty}P_{X_N}(\mathbb{E}^y(||u_h^y-u^\dagger||_{H^1(B)}\big|X_N)\lesssim (\delta_N')^{\tau'}) = 1.
    \end{equation}
\end{theorem}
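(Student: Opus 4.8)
Since the model \eqref{model assumption} is linear--Gaussian and, in the affine case, the MAP estimator equals the posterior mean, $u_h^y$ is affine in the data $y=u^\dagger|_\omega(X_N)+\sigma^2\eta$. I split $u_h^y=\hat u_h+w_h$, where $\hat u_h\in V_h$ solves \eqref{singleEL} with the noise-free datum $\bar y:=u^\dagger|_\omega(X_N)$ and $w_h\in V_h$ solves the same Euler--Lagrange equation whose right-hand side is only the stochastic term $\tfrac{\sigma^2}{N}\langle\eta,\cdot\rangle_{\Sigma^{-1}}$, with all $f$-dependent terms removed. Conditioning on $X_N$, the triangle inequality gives
\[
\mathbb{E}^y\!\bigl(\|u_h^y-u^\dagger\|_{L^2(B)}\,\big|\,X_N\bigr)\;\le\;\|\hat u_h-u^\dagger\|_{L^2(B)}+\mathbb{E}^\eta\|w_h\|_{L^2(B)},
\]
and analogously with $\|\cdot\|_{H^1(B)}$ when $\beta=0$; it then suffices to control, on an $X_N$-event of probability $\to1$, the deterministic bias $\|\hat u_h-u^\dagger\|$ and the stochastic term $\mathbb{E}^\eta\|w_h\|$ separately. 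Let $B_h^\beta$ denote the symmetric bilinear form on the left of \eqref{singleEL} and $\|v_h\|_h:=B_h^\beta(v_h,v_h)^{1/2}$; since $\|v_h\|_h^2=\tfrac1N\|v_h\|_{\Sigma^{-1}}^2+\|h^\beta v_h\|_{V_h}^2+\sum_K\|h^{1+\beta}\Delta v_h\|_{L^2(K)}^2+\|h^\beta\Delta v_h\|_{W_h^\ast}^2$ is the quadratic part of \eqref{costfunctional}, coercivity in $\|\cdot\|_h$ is automatic. I will prove, for all $v_h\in V_h$: (i) the residual comparison $\|\Delta v_h\|_{H^{-(1+\beta)}(\Omega)}\lesssim\|v_h\|_h$, by elementwise integration by parts with discrete trace/inverse inequalities and a Scott--Zhang interpolant of the test function in $W_h$ to absorb the $W_h^\ast$-contribution; (ii) the empirical/$L^2$ equivalence $\|v_h\|_{L^2(\omega)}\asymp N^{-1/2}\|v_h\|_{\Sigma^{-1}}\lesssim\|v_h\|_h$, valid uniformly over $V_h$ with $P_{X_N}$-probability $\to1$ as soon as $\dim V_h\asymp h^{-d}=o(N)$ (the discrete analogue of the no-overfitting restriction in \cref{consistency}); (iii) the inverse-type bound $\|v_h\|_{H^1(\Omega)}\lesssim h^{-(\alpha-1+\beta)}\|v_h\|_h$ from the weight $h^{\alpha-1}$ in \eqref{discrete norms}. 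Finally, subtracting from \eqref{singleEL} the identity obtained by formally inserting $u^\dagger$ (legitimate since $-\Delta u^\dagger=f$ and $u^\dagger(X^{(i)})=\bar y^{(i)}$), the residual left by $u^\dagger$ equals the \emph{vanishing} regularisation term $h^{2(\alpha-1+\beta)}\langle u^\dagger,v_h\rangle_{H^1}$ plus the defect $f-f_h$ of \eqref{f_h}; the weights in \eqref{discrete prior}--\eqref{discrete prior H1} make the latter enter at the same order, so the consistency error is $\lesssim h^{\alpha-1+\beta}\|u^\dagger\|_{H^\alpha}$.

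\textbf{Deterministic bias.} With $\mathcal I_h$ a quasi-interpolant onto $V_h^k$ ($k$ as in the hypothesis), testing the consistency identity against $v_h=\hat u_h-\mathcal I_h u^\dagger$ and combining coercivity, continuity of $B_h^\beta$ in $\|\cdot\|_h$ and standard interpolation estimates for $u^\dagger\in H^\alpha$ gives $\|\hat u_h-\mathcal I_h u^\dagger\|_h\lesssim h^{\alpha-1+\beta}\|u^\dagger\|_{H^\alpha}$. Adding the interpolation error of $u^\dagger$ and using (i)--(ii), the weak side of \cref{three-balls} satisfies (for $\beta=1$) $\|\hat u_h-u^\dagger\|_{L^2(\omega)}+\|\Delta(\hat u_h-u^\dagger)\|_{H^{-2}(\Omega)}\lesssim h^\alpha$ up to an $X_N$-negligible remainder, while by (iii) the strong side satisfies $\|\hat u_h-u^\dagger\|_{L^2(\Omega)}\le\|\hat u_h-u^\dagger\|_{H^1(\Omega)}\lesssim h^{-\alpha}\|\hat u_h-\mathcal I_h u^\dagger\|_h+h^{\alpha-1}\lesssim1$ --- the vanishing weight cancels the consistency order exactly. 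Inserting these into \eqref{L2tbi} yields $\|\hat u_h-u^\dagger\|_{L^2(B)}\lesssim(h^\alpha)^\tau$, and since the prescribed coupling makes $h^\alpha\asymp\delta_N$, we get $\|\hat u_h-u^\dagger\|_{L^2(B)}\lesssim\delta_N^\tau$. The case $\beta=0$ is identical with \eqref{H1tbi} for \eqref{L2tbi}, $\alpha-1$ for $\alpha$ and $\tau'$ for $\tau$.

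\textbf{Stochastic term.} Coercivity applied to the equation for $w_h$ gives $\|w_h\|_h^2\lesssim\tfrac{\sigma^2}{N}\langle\eta,w_h\rangle_{\Sigma^{-1}}$; writing $w_h$ through the stiffness matrix $A_h$ of $B_h^\beta$ in a basis of $V_h$ and the sampling matrix $\Phi$, and using that the misfit term in $\|\cdot\|_h$ forces $\Phi^{\mathsf T}\Sigma^{-1}\Phi\preceq N A_h$, one computes
\[
\mathbb{E}^\eta\|w_h\|_h^2=\frac{\sigma^4}{N^2}\operatorname{tr}\!\bigl(A_h^{-1}\Phi^{\mathsf T}\Sigma^{-1}\Phi\bigr)\;\lesssim\;\frac{\sigma^4}{N}\dim V_h\;\lesssim\;\frac{\sigma^4 h^{-d}}{N}.
\]
Applying \cref{three-balls} to $w_h$ itself, with (i) and (iii), yields the pointwise-in-$\eta$ bound $\|w_h\|_{L^2(B)}\lesssim\|w_h\|_h^\tau\bigl(h^{-\alpha}\|w_h\|_h\bigr)^{1-\tau}=h^{-\alpha(1-\tau)}\|w_h\|_h$; taking expectations, using Jensen and inserting the prescribed $h=h(N)$, the powers of $N$ combine to give $\mathbb{E}^\eta\|w_h\|_{L^2(B)}\lesssim\sigma^2 h^{-\alpha(1-\tau)-d/2}N^{-1/2}\asymp\delta_N^\tau$ (up to a harmless factor, $\sigma$ being fixed). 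The same computation with $\beta=0$, $\tau'$ and $\alpha-1$ gives $\mathbb{E}^\eta\|w_h\|_{H^1(B)}\lesssim(\delta_N')^{\tau'}$; combining with the deterministic bounds on the common $X_N$-event finishes the argument.

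\textbf{Main difficulty.} The heart of the proof is the discrete stability step: proving the residual comparison (i) --- turning the weak stabilisation quantities ($J_h$, the elementwise $\sum_K\|h^{1+\beta}\Delta v_h\|_{L^2(K)}^2$ and $\|h^\beta\Delta v_h\|_{W_h^\ast}^2$) into a genuine $H^{-(1+\beta)}(\Omega)$ bound demands delicately matched trace, inverse and approximation estimates --- together with the uniform empirical/$L^2$ equivalence (ii), which is a covering-and-concentration argument over the $h^{-d}$-dimensional space $V_h$ and is precisely what pins down the coupling $h\sim(\sigma^2/N)^{1/(2\alpha+d)}$ (so that $\dim V_h=o(N)$), mirroring the overfitting mechanism of \cref{consistency}. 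Secondary technicalities are the bookkeeping of the polynomial degree $k$ against the regularity $\alpha$ in the interpolation estimates, and the treatment of $f=-\Delta u^\dagger\notin L^2(\Omega)$ when $\alpha<2$ in the consistency identity.
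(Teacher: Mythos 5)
Your proposal follows essentially the same route as the paper's proof: you decompose $u_h^y - u^\dagger$ into the conditional mean $\hat u_h = \mathbb{E}^y(u_h^y\,|\,X_N)$ (which you call the noise-free solution, equivalently the paper's $u_\beta$) plus a noise term $w_h$, bound the bias via the interpolant $\mathcal{I}_h u^\dagger$ and the weak consistency/interpolation estimates of the stabilized forms, bound the noise via a trace computation giving $\mathbb{E}^\eta \|w_h\|_h^2 \lesssim \sigma^2 h^{-d}/N$, establish the empirical-to-$L^2$ equivalence over $V_h$ by a concentration argument on the $O(h^{-d})$-dimensional Gram matrix, and close with the three-balls inequality \cref{three-balls}. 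These are exactly the steps encapsulated in the paper's \cref{three norms}, \cref{triplenormbound}, \cref{operatornormsconv}, \cref{montecarloconv}, \cref{empirical bound}, \cref{L2errorbound} and \cref{variancebound}; the only cosmetic differences are that you apply the three-balls interpolation to the bias and noise terms separately rather than to the combined $d_\beta$ metric, and that your intermediate powers of $\sigma$ differ by the constant factor arising from how one reads the $\sigma^2\eta$ scaling in \eqref{model assumption} (harmless in the large-sample regime).
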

\begin{theorem}\label{discrete posterior consistency}
   Let the setting be the same as in \cref{Eyconv}. Assume $l_n$ is an arbitrary sequence such that $l_N\downarrow 0$  and $Nl_N\to\infty$ as $N\to\infty$. Then
   
   (1) $\beta = 1$: if $h = h(N)=\mathcal{O}( (\frac{\sigma^2}{Nl_N})^{\frac{1}{2\alpha+d}})$, then for $\delta_N = (\frac{\sigma^2}{Nl_N})^{-\frac{\alpha}{2\alpha+d}}$, we have
    \begin{equation}\label{L2posteriorconsistency}
        \lim_{N\to \infty}P_{D_N}(||u_h^y-u^\dagger||_{L^2(B)}\lesssim (\delta_N)^\tau)=1.
    \end{equation}
If $h= h(N)\gg \mathcal{O}( N^{-\frac{1}{2\alpha+d}})$, then 
 \begin{equation}\label{L2convu^y_h}
        \lim_{N\to \infty}P_{D_N}(||u_h^y-u^\dagger||_{L^2(B)}\lesssim h^{\alpha\tau}||u^\dagger||_{H^\alpha(\Omega)})=1.
    \end{equation}

(2) $\beta = 0$: if $h = h(N)\sim \mathcal{O}((\frac{\sigma^2}{Nl_N})^{\frac{1}{2(\alpha-1)+d}})$, then for $\delta'_N = (\frac{\sigma^2}{Nl_N})^{-\frac{\alpha-1}{2(\alpha-1)+d}}$, we have
    \begin{equation}\label{H1posteriorconsistency}
        \lim_{N\to \infty}P_{D_N}( ||u_h^y-u^\dagger||_{H^1(B)}\lesssim (\delta'_N)^{\tau'})=1.
    \end{equation}
If $h=h(N)\gg \mathcal{O}( (\frac{\sigma^2}{N})^{\frac{1}{2(\alpha-1)+d}})$, then 
 \begin{equation}\label{H^1convu^y_h}
        \lim_{N\to \infty}P_{D_N}(||u_h^y-u^\dagger||_{H^1(B)}\lesssim h^{(\alpha-1)\tau'}||u^\dagger||_{H^\alpha(\Omega)})=1.
    \end{equation}
\end{theorem}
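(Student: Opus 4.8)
The plan is to derive both parts of \cref{discrete posterior consistency} from \cref{Eyconv} together with a concentration argument that removes the expectation over the observational noise. The structure mirrors the reduction from mean-square bounds to high-probability bounds that is standard in posterior contraction theory.

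\textbf{Step 1: From conditional mean bounds to conditional probability bounds.} Starting from \cref{Eyconv}, for $\beta=1$ we have that, on an event of $P_{X_N}$-probability tending to $1$, the conditional expectation $\mathbb{E}^y(\|u_h^y-u^\dagger\|_{L^2(B)}\mid X_N)$ is $\lesssim \delta_N^\tau$ with $\delta_N = (\sigma^2/N)^{-\alpha/(2\alpha+d)}$. The quantity $\|u_h^y-u^\dagger\|_{L^2(B)}$ is, for fixed $X_N$, a Lipschitz function of the Gaussian vector $\eta$ (because $u_h^y$ depends affinely on $y$, hence affinely on $\eta$, through the linear Euler--Lagrange system (\ref{singleEL}), and the $L^2(B)$ semi-norm is sub-multiplicative under this affine map with an operator-norm constant that one controls via the coercivity of the bilinear form defining (\ref{singleEL})). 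Therefore Gaussian concentration (Borell--TIS, or simply a Markov/Chebyshev bound combined with the variance estimate) gives, conditionally on $X_N$,
\begin{equation*}
    P_y\left(\|u_h^y-u^\dagger\|_{L^2(B)} > C\,\mathbb{E}^y(\|u_h^y-u^\dagger\|_{L^2(B)}\mid X_N)\right)\to 0,
\end{equation*}
and integrating over $X_N$ (dominated convergence, the conditional probability being bounded by $1$) yields the unconditional statement $P_{D_N}(\|u_h^y-u^\dagger\|_{L^2(B)}\lesssim \delta_N^\tau)\to 1$. The one subtlety is that to upgrade the rate from $(\sigma^2/N)$ to $(\sigma^2/(Nl_N))$ one must re-run the argument behind \cref{Eyconv} with the slightly coarser mesh $h(N)=\mathcal{O}((\sigma^2/(Nl_N))^{1/(2\alpha+d)})$; since $l_N\downarrow 0$ slowly and $Nl_N\to\infty$, every estimate in that proof degrades only by the harmless factor $l_N^{-\ast}$, which is exactly what (\ref{L2posteriorconsistency}) records. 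The same reasoning with $\tau'$, $H^1(B)$, the exponent $(\alpha-1)/(2(\alpha-1)+d)$ and $\beta=0$ gives (\ref{H1posteriorconsistency}).

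\textbf{Step 2: The ``slow mesh'' bounds (\ref{L2convu^y_h}) and (\ref{H^1convu^y_h}).} Here $h$ is allowed to decay \emph{more slowly} than the balanced rate, so the interpolation (bias) term dominates the stochastic (variance) term. The plan is to return to the Galerkin-type estimate (\ref{galerkin}) that underlies the proof of \cref{Eyconv}: one shows $\|u_h^y-\mathcal{I}_h u^\dagger\|_{V_h}\lesssim \|u^\dagger-\mathcal{I}_h u^\dagger\|_{(\text{PDE/data norms})} + (\text{noise term})$, invokes standard finite element interpolation estimates $\|u^\dagger-\mathcal{I}_h u^\dagger\|\lesssim h^{\alpha-?}\|u^\dagger\|_{H^\alpha}$, feeds the result into the conditional stability inequality (\ref{L2tbi}) (resp.\ (\ref{H1tbi})) from \cref{three-balls} to pass to the $L^2(B)$ (resp.\ $H^1(B)$) semi-norm with the H\"older exponent $\tau$ (resp.\ $\tau'$), and finally observes that when $h\gg N^{-1/(2\alpha+d)}$ the noise term is of lower order than $h^{\alpha\tau}\|u^\dagger\|_{H^\alpha(\Omega)}$, so the latter is the effective rate. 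Concentration over $\eta$ (Step 1) again removes the randomness, giving (\ref{L2convu^y_h}); the $\beta=0$ case is identical with $\alpha$ replaced by $\alpha-1$ and $\tau$ by $\tau'$.

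\textbf{Main obstacle.} The routine part is the finite element bookkeeping; the genuinely delicate point is making the Lipschitz-in-$\eta$ constant for the map $\eta\mapsto \|u_h^y-u^\dagger\|_{L^2(B)}$ (resp.\ $H^1(B)$) uniform in $N$ and $h$ along the chosen coupling $h=h(N)$. This requires a lower bound on the smallest ``eigenvalue'' of the discrete operator in (\ref{singleEL}) measured in the right pair of norms — essentially an inf--sup / coercivity estimate for $s_h(h^\beta\cdot,h^\beta\cdot)+\langle h^\beta\Delta\cdot,h^\beta\Delta\cdot\rangle_{W_h^*}+\frac{1}{N}\langle\cdot,\cdot\rangle_{\Sigma^{-1}}$ that does not deteriorate faster than the bias term it is multiplied against. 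I expect this estimate to be the crux, and I would isolate it as a lemma (likely already proved en route to \cref{Eyconv}) before assembling the four bounds above.
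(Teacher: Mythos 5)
The general strategy you sketch --- pass from the conditional-expectation bounds of \cref{Eyconv} to probability bounds by a Chebyshev-type concentration over the noise $\eta$, at the price of a factor $l_N$, and then observe that in the slow-mesh regime the interpolation error dominates --- is the same as the paper's, so the proposal is broadly on track. But Step 1 as written contains a genuine gap, and the ``main obstacle'' you single out is not in fact needed.

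The gap: with the balanced mesh $h \sim (\sigma^2/N)^{1/(2\alpha+d)}$ used in \cref{Eyconv}, the conditional bias $d_\beta(u_\beta,u_h^\dagger)$ and the conditional standard deviation of $u_h^y-u_\beta$ are of the \emph{same} order, so the statement
\begin{equation*}
  P_y\Bigl(\|u_h^y-u^\dagger\|_{L^2(B)} > C\,\mathbb{E}^y(\|u_h^y-u^\dagger\|_{L^2(B)}\mid X_N)\Bigr)\to 0
\end{equation*}
does \emph{not} hold for a fixed constant $C$: Markov gives only $\le 1/C$, and Borell--TIS gives nothing useful because the mean and the fluctuation scale are comparable. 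What the paper actually does is first isolate the $y$-centred piece $u_h^y-u_\beta$ (which has conditional mean zero), apply Chebyshev to $|||u_h^y-u_\beta|||_\beta$ with a threshold inflated by $l_N^{-1/2}$, getting $P_y(\cdots\ge C\sigma/\sqrt{Nl_Nh^d}\mid X_N)\le l_N$, and \emph{then} choose $h$ to equate this inflated variance scale with the bias $h^{\alpha-1+\beta}\|u^\dagger\|_{H^\alpha(\Omega)}$. The coarser mesh $h\sim(\sigma^2/(Nl_N))^{1/(2\alpha+d)}$ is therefore not a cosmetic ``re-run to fix the rate'' as your addendum suggests, but the thing that makes Chebyshev effective at all (it drives $\mathrm{Var}/\mathrm{bias}^2 \sim l_N \to 0$). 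The deterministic pieces $u_\beta-u_h^\dagger$ and $u_h^\dagger-u^\dagger$ then just carry over, and \cref{three-balls} converts the $d_\beta$-bound into an $L^2(B)$ or $H^1(B)$ bound at the end. Your Step 1 collapses these into a single concentration claim that, as stated, is false.

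The over-engineering: there is no need for Borell--TIS, Lipschitz-in-$\eta$ constants, or a fresh inf--sup/coercivity estimate for the discrete operator. The conditional second-moment bound $\mathbb{E}^y(|||u_h^y-u_\beta|||_\beta^2\mid X_N)\le C\sigma^2/(Nh^d)$ is obtained in \cref{variancebound} by a direct matrix trace identity, $\mathrm{tr}\bigl(I-(\frac1N\mathbf{W}+\mathbf{K})^{-1}\mathbf{K}\bigr)\le n\lesssim h^{-d}$; the $h^{-d}$ is simply the dimension of $V_h$, not a coercivity constant. That lemma, together with the empirical-process lower bound \cref{empirical bound} that converts $\|\cdot\|_{\Sigma^{-1}}$-norms back to $L^2(\omega)$-norms, is all that is needed. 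If you were to carry out the Borell--TIS route you sketch, you would in the end be re-deriving exactly this moment bound, so the ``crux'' you flag is already available and considerably simpler than you fear. Your Step 2 is essentially correct, modulo the same remark: the paper realizes the slow-mesh regime by re-choosing $l_N$ (using $Nl_N\to\infty$) so that $h\gg(\sigma^2/(Nl_N))^{1/(2(\alpha-1+\beta)+d)}$, after which the bias term dominates and \cref{three-balls} finishes.
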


We now discuss \cref{Eyconv}, \cref{discrete posterior consistency}, and compare them with the posterior consistency theory in \cref{sec:posterior consistency} and \cref{sec:conforming space}. \cref{Eyconv} and \cref{discrete posterior consistency} do not require the ground truth $u^\dagger$ to live in $\textup{Im}(\mathcal{C}_0^\frac{1}{2})$ for some trace-class covariance operator. This condition usually implies that $u^\dagger\in H^\alpha$ with $\alpha>\frac{d}{2}$. The convergence rate for $L^2(B)$ semi-norm in (\ref{EyL2}) coincides with the convergence rate predicted in \cref{conforming posterior consistency}. Estimate in (\ref{EyH1}) slightly generalizes \cref{forward stability}, since only the following estimate holds
\begin{equation*}
    ||u||_{L^2(\omega)}+||\Delta u||_{H^{-1}(\Omega)} \le C||u||_{H^1(\Omega)},
\end{equation*}
while \cref{forward stability} requires the left-hand side to be bounded by $||u||_{(H^\kappa(\Omega))^*}$ with $\kappa\ge 0$.

Moreover, (\ref{L2convu^y_h}) and (\ref{H^1convu^y_h}) give finite element convergence rates up to stability constants $\tau,\tau'$, which are considered optimal for such ill-posed problems \cite{BNO24}. This means an optimal convergence rate can be obtained with a high probability when the number of samples $N$ is large and/or the level of noise $\sigma^2$ is small.

In addition, the discretization given in \cref{Eyconv} and \cref{discrete posterior consistency} is coarse, since at most we have $h = h(N)\sim \mathcal{O}(N^{-\frac{1}{2\alpha+d}})$. Let $n_{V_h}$ be the global degrees of freedom (or dimension) of $V_h$. Then we have $ n_{V_h}\sim \mathcal{O}(N^{\frac{d}{2\alpha+d}})$, which means the degrees of freedom of the finite element space scales slower than the number of samples, and $n_{V_h}\lesssim N^\frac{1}{2}$ under the usual assumption that the prior $u\in H^\alpha(\Omega)$ with $\alpha>\frac{d}{2}$. If we compute $u^y_h$, the overall computational cost of solving the linear system is at most $\mathcal{O}(N^{\frac{3d}{2\alpha+d}})$, which requires only $\alpha>d$ to make the overall computational cost on par with the sampling cost.

\subsection{Computation of the MAP estimator} 
In this subsection, we introduce the method for computing the MAP estimator. We employ a mixed formulation to avoid direct computation of $||h^\beta(-\Delta u^y_h-f)||_{W_h^*}$. Let $z^y_h \in W_h$ satisfying
\begin{equation}\label{z_h}
    \begin{array}{cc}
     \langle z^y_h,w_h\rangle_{W_h} =-\Delta u^y_h (w_h)-f(w_h) = a(u^y_h,w_h) - f(w_h) ,    & \forall w_h\in W_h. \\
    \end{array}
\end{equation}
Since $z_h\in W_h$ is a Riesz representation of $-\Delta u_h-f\in W_h^*$, we have by the definition of the inner product in dual spaces,
\begin{equation}
     \langle h^\beta (\Delta u^y_h+f),h^\beta\Delta v_h\rangle_{W_h^*} =  a(h^\beta v_h,h^\beta z^y_h).
\end{equation}
Therefore, solving the Euler-Lagrange equation (\ref{singleEL}) is equivalent to finding $(u^y_h,z^y_h)\in V_h\times W_h$, such that for all $(v_h,w_h)\in V_h\times W_h$,
\begin{equation}\label{mixedsys}
    \left\{
\begin{array}{rl}
  \frac{1}{N}\langle u^y_h,v_h\rangle_{\Sigma^{-1}} + s_h(h^\beta u^y_h,h^\beta v_h) \quad & \\[2mm]  + a(h^\beta v_h,h^\beta z^y_h) =  & \sum_{K\in \mathcal{T}_h}\langle -h^{1+\beta}f_h,h^{1+\beta}\Delta v_h\rangle_{L^2(K)}\\[2mm]
  &+\frac{1}{N}\langle y,v_h\rangle_{\Sigma^{-1}} \\[2mm]
     a(u^y_h,w_h) - \langle z^y_h,w_h\rangle_{W_h} =& f(w_h).
\end{array}
    \right.
\end{equation}
Note that the degrees of freedom of the space $W_h$ is smaller than the space $V_h$ since $W_h\subset V_h$ (in fact $W_h$ is only the space of continuous piece-wise affine functions). Therefore, the argument about the computational cost in the last section remains valid and we have the total degree of freedom $n_{V_h}+n_{W_h}\sim \mathcal{O}(N^{\frac{d}{2\alpha+d}})$.
\subsection{Proof of \cref{Eyconv} and \cref{discrete posterior consistency}}
We first give the outline of the proof and some universal notations throughout this subsection. Let $X_N  = (X^{(1)}, X^{(2)}, ..., X^{(N)})^T$ and $D_N = ((X^{(1)},y^{(1)}), (X^{(2)},y^{(2)}), ..., (X^{(N)},y^{(N)}))^T$. Let $y,\eta,\sigma^2,\Sigma, \sigma^2_{min},\sigma^2_{max}$ be from the observation model (\ref{model assumption}). Let $\beta = 0$ or $1$ and $(u^y_h,z_h^y)$ be the solution of (\ref{mixedsys}). Let $u_{\beta} = \mathbb{E}^y(u^y_h\big|X_N)$, which means taking expectation for $u_h^y$ over $y$. Since expectation keeps linearity, we have that $u_\beta$ satisfies: for all $v_h\in V_h$
 \begin{equation}\label{u_hequation}
\begin{aligned}
    &\frac{1}{N}\langle u_\beta,v_h\rangle_{\Sigma^{-1}} + s_h(h^{\beta}u_\beta,h^{\beta}v_h) + \langle h^{\beta} (\Delta u_\beta+f),h^{\beta}\Delta v_h\rangle_{W_h^*}\\ = &\frac{1}{N}\langle u^\dagger,v_h\rangle_{\Sigma^{-1}}+\sum_{K\in \mathcal{T}_h}\langle -h^{1+\beta}f_h,h^{1+\beta}\Delta v_h\rangle_{L^2(K)}.
    \end{aligned}
\end{equation}

Let $\mathcal{I}_h:H^\alpha(\Omega)\to V_h$ be some interpolation operator, $u^\dagger\in H^\alpha(\Omega)$ with $\alpha>1$ be the ground truth and $u_h^\dagger = \mathcal{I}_hu^\dagger$. We have the following inequality
\begin{equation}
    |u_h^y-u_h^\dagger|_V \le |u_h^y-u_\beta|_V+|u_\beta-u_h^\dagger|_V,
\end{equation}
whenever the semi-norm $|\cdot|_V$ makes sense for these functions. One may see $|u_\beta-u_h^\dagger|_V$ as the mean approximation error and $|u_h^y-u_\beta|_V$ as the standard deviation of the estimator. We will first bound the mean error and then the standard deviation. Throughout the proof, we assume that the operator $\mathcal{I}_h$ is the quasi-interpolation operator introduced in \cref{apex:feminequality} (see \cite[Section 6]{ern2017finite} for more details).
\begin{prop}\label{J_hbound}
    For any $v_h\in V_h$, we have
    \begin{equation*}
        J_h(v_h,v_h)\le ||v_h||^2_{H^1(\Omega)}\le h^{-2}||v_h||_{L^2(\Omega)}.
    \end{equation*}
\end{prop}
\begin{proof}
    Direct application of (\ref{tc2}).
\end{proof}

\begin{lemma}\label{three norms}
    Let $u^\dagger\in H^\alpha(\Omega)$. The following estimate holds:
    \begin{equation*}
    \begin{aligned}
       & ||h^\beta u_h^\dagger||_{V_h}^2+\sum_{K\in \mathcal{T}_h}||h^{1+\beta}(-\Delta u_h^\dagger-f_h)||^2_{L^2(K)}+ ||h^\beta (-\Delta u^\dagger_h-f)||^2_{W_h^*}\\
        \le &Ch^{2(\alpha-1+\beta)}||u^\dagger||_{H^\alpha(\Omega)}
        \end{aligned}
    \end{equation*}
\end{lemma}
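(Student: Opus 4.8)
The plan is to estimate the three summands on the left-hand side separately and add the results, the target for each being $\lesssim h^{2(\alpha-1+\beta)}\|u^\dagger\|_{H^\alpha(\Omega)}^2$. Since $h$ is a scalar, the weights $h^\beta$ and $h^{1+\beta}$ factor out of the respective norms, so I really need the rate $h^{\alpha-1+\beta}$ for $\|h^\beta u_h^\dagger\|_{V_h}$ and for $\|h^\beta(-\Delta u_h^\dagger-f)\|_{W_h^*}$, and the rate $h^{\alpha-2}$ for $\|{-\Delta_h}u_h^\dagger-f_h\|_{L^2(\Omega)}$, where $\Delta_h$ denotes the element-wise Laplacian. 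The only ingredients will be the optimal local estimates $|\mathcal{I}_h u^\dagger-u^\dagger|_{H^m(K)}\lesssim h^{\alpha-m}\|u^\dagger\|_{H^\alpha(\omega_K)}$ and the $H^1$-stability of the quasi-interpolant $\mathcal{I}_h$ from \cref{apex:feminequality}, scaled trace and inverse inequalities on the shape-regular quasi-uniform family, and the weak identity $-\Delta u^\dagger=f$.

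First I would split $\|h^\beta u_h^\dagger\|_{V_h}^2=h^{2\beta}J_h(u_h^\dagger,u_h^\dagger)+h^{2(\alpha-1+\beta)}\|u_h^\dagger\|_{H^1(\Omega)}^2$. The second piece is immediate from $H^1$-stability, $\|u_h^\dagger\|_{H^1(\Omega)}=\|\mathcal{I}_h u^\dagger\|_{H^1(\Omega)}\lesssim\|u^\dagger\|_{H^1(\Omega)}\le\|u^\dagger\|_{H^\alpha(\Omega)}$. For the jump seminorm I would use that the normal gradient of $u^\dagger\in H^\alpha(\Omega)$ has no interior jump, so $\llbracket\nabla u_h^\dagger\cdot n\rrbracket_F=\llbracket\nabla(\mathcal{I}_h u^\dagger-u^\dagger)\cdot n\rrbracket_F$, and then apply a scaled trace inequality on the two elements meeting each face $F$ together with the $m=1,2$ interpolation estimates to get $J_h(u_h^\dagger,u_h^\dagger)\lesssim h^{2(\alpha-1)}\|u^\dagger\|_{H^\alpha(\Omega)}^2$; when $1<\alpha\le\frac{3}{2}$, where $\nabla u^\dagger$ need not have facewise traces, I would instead quote the direct bound for $J_h(\mathcal{I}_h v,\mathcal{I}_h v)$ in \cref{apex:feminequality}. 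Multiplying by $h^{2\beta}$ closes this term.

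Next, $\|h^\beta(-\Delta u_h^\dagger-f)\|_{W_h^*}^2=h^{2\beta}\sup_{w_h\in W_h\backslash\{0\}}\bigl(a(u_h^\dagger,w_h)-f(w_h)\bigr)^2/\|w_h\|_{W_h}^2$. Testing $-\Delta u^\dagger=f$ against $w_h\in W_h\subset H_0^1(\Omega)$ gives $f(w_h)=a(u^\dagger,w_h)$, so the numerator equals $a(\mathcal{I}_h u^\dagger-u^\dagger,w_h)$. Integrating by parts element by element, using $\Delta w_h|_K=0$ (piecewise affine), the single-valuedness of $\mathcal{I}_h u^\dagger-u^\dagger$ across faces and $w_h|_{\partial\Omega}=0$, this becomes
\begin{equation*}
\begin{aligned}
a(\mathcal{I}_h u^\dagger-u^\dagger,w_h)={}&\sum_{F\in\mathcal{F}_I}\int_F(\mathcal{I}_h u^\dagger-u^\dagger)\,\llbracket\nabla w_h\cdot n\rrbracket_F\,dS\\
&+\sum_{F\subset\partial\Omega}\int_F(\mathcal{I}_h u^\dagger-u^\dagger)\,\nabla w_h\cdot n\,dS.
\end{aligned}
\end{equation*}
A Cauchy--Schwarz over the faces then splits off $J_h(w_h,w_h)^{1/2}$ and $\bigl(\int_{\partial\Omega}h\,|\nabla w_h\cdot n|^2\,dS\bigr)^{1/2}$, both $\le\|w_h\|_{W_h}$, times $\bigl(\sum_F h^{-1}\|\mathcal{I}_h u^\dagger-u^\dagger\|_{L^2(F)}^2\bigr)^{1/2}$, which a scaled trace inequality and the interpolation estimates bound by $h^{\alpha-1}\|u^\dagger\|_{H^\alpha(\Omega)}$. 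Hence $\|h^\beta(-\Delta u_h^\dagger-f)\|_{W_h^*}\lesssim h^{\alpha-1+\beta}\|u^\dagger\|_{H^\alpha(\Omega)}$.

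The element-wise residual term $\sum_K\|h^{1+\beta}(-\Delta u_h^\dagger-f_h)\|_{L^2(K)}^2$ is the part I expect to demand the most care, since it requires $\|{-\Delta_h}u_h^\dagger-f_h\|_{L^2(\Omega)}\lesssim h^{\alpha-2}\|u^\dagger\|_{H^\alpha(\Omega)}$. For $\alpha\ge2$ I would write ${-\Delta_h}u_h^\dagger-f_h={-\Delta_h}(\mathcal{I}_h u^\dagger-u^\dagger)+(f-f_h)$, bounding the first term by the $H^2$-interpolation estimate (here $k\ge\alpha-1$ prevents a loss of rate) and the second by the $L^2$-approximation property of the data projection $f_h$ applied to $f=-\Delta u^\dagger\in H^{\alpha-2}(\Omega)$. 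For $1<\alpha<2$ this splitting is meaningless because $\Delta u^\dagger$ is only a distribution in $H^{\alpha-2}(\Omega)$; instead I would bound $\Delta_h\mathcal{I}_h u^\dagger$ and $f_h$ directly in $L^2(\Omega)$ by inverse inequalities — the first as $h^{-2}$ times the $L^2$-distance of $\mathcal{I}_h u^\dagger$ from the piecewise affine functions, which is $O(h^{\alpha}\|u^\dagger\|_{H^\alpha})$ by the approximation estimates, and the second from $\|f_h\|_{L^2(\Omega)}^2=f(f_h)\le\|f\|_{H^{\alpha-2}(\Omega)}\|f_h\|_{H^{2-\alpha}(\Omega)}\lesssim h^{\alpha-2}\|u^\dagger\|_{H^\alpha(\Omega)}\|f_h\|_{L^2(\Omega)}$ using $f_h\in H_0^1(\Omega)\subset H^{2-\alpha}_0(\Omega)=(H^{2-\alpha}(\Omega))'$ dual-paired with $f\in H^{\alpha-2}(\Omega)$ and the inverse inequality $\|f_h\|_{H^{2-\alpha}(\Omega)}\lesssim h^{\alpha-2}\|f_h\|_{L^2(\Omega)}$ — so both pieces are $O(h^{\alpha-2}\|u^\dagger\|_{H^\alpha})$ and the triangle inequality finishes. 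Multiplying by $h^{2(1+\beta)}$ and summing over elements gives $h^{2(\alpha-1+\beta)}\|u^\dagger\|_{H^\alpha(\Omega)}^2$, and adding the three estimates proves the lemma. The genuinely delicate points are this low-regularity regime $1<\alpha<2$, where the pointwise PDE residual of the interpolant must be controlled although $\Delta u^\dagger$ is not a function, and the treatment of the boundary faces, where one needs the interpolant $\mathcal{I}_h$ and the data projection $f_h$ to be defined compatibly (or to be absorbed together with the $W_h^*$ term).
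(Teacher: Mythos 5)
Your proposal is correct and follows essentially the same strategy as the paper: bound the three terms separately, use the weak identity $a(u^\dagger,w_h)=f(w_h)$ plus elementwise integration by parts for the $W_h^*$ term, and split the treatment of the $L^2$-residual term according to whether $\alpha\ge 2$ or $\alpha<2$. There are a few genuine differences in execution worth noting. For the low-regularity bound on $\|f_h\|_{L^2}$ ($1<\alpha<2$) you invoke the duality pairing $f(f_h)\le\|f\|_{H^{\alpha-2}}\|f_h\|_{H^{2-\alpha}}$ together with a fractional inverse inequality; the paper instead substitutes $\|f_h\|_{L^2}^2=a(u^\dagger,f_h)$, splits $u^\dagger=(u^\dagger-u_h^\dagger)+u_h^\dagger$, and integrates by parts the $u_h^\dagger$ piece to land back on the discrete quantities already controlled in Steps 1--2. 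Your route is shorter but requires the fractional inverse inequality and the bounded map $\Delta:H^\alpha\to(H_0^{2-\alpha})'$, which the paper's in-place argument avoids. You are also more explicit about the boundary-face contribution $\sum_{F\subset\partial\Omega}\int_F(u_h^\dagger-u^\dagger)\nabla w_h\cdot n\,dS$ in Step 3 — the paper's displayed identity omits it, though its $\|\cdot\|_{W_h}$ norm contains the matching boundary penalty, so the final estimate is the same; your version documents this. Finally, you flag the regime $1<\alpha\le\tfrac32$ where $\llbracket\nabla u^\dagger\cdot n\rrbracket$ has no facewise trace and defer to a direct bound on $J_h(\mathcal{I}_h v,\mathcal{I}_h v)$; note that no such bound appears in \cref{apex:feminequality} — the paper handles this by citing \cite[Proposition~3.3]{BL24}, and you would need to cite the same result (or reproduce its add-and-subtract/inverse-inequality argument) rather than pointing to the appendix.
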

\begin{proof}
    \textbf{Step 1.} To bound $ ||h^\beta u_h^\dagger||_{V_h}^2$ we start from the identity $ ||h^\beta u_h^\dagger||_{V_h}^2 =  J_h(h^\beta u_h^\dagger,h^\beta u_h^\dagger)+||h^{(\alpha-1+\beta)} u_h^\dagger||_{H^1(\Omega)}^2$. The two terms are bounded by \cite[Proposition 3.3]{BL24}\footnote{Note that, in the reference proposition the Scott-Zhang interpolant is used while here the interpolant is defined in \cref{apex:feminequality}. The reference proposition also holds for the interpolant we use here.} and (\ref{interpolantquasi}) respectively. Therefore
    \begin{equation}\label{V_hbound}
        ||h^\beta u_h^\dagger||_{V_h}^2 \le Ch^{2(\alpha-1+\beta)} ||u^\dagger||^2_{H^\alpha(\Omega)}.
    \end{equation}
    \textbf{Step 2.} For $\sum_{K\in \mathcal{T}_h}||h^{1+\beta}(-\Delta u_h^\dagger-f_h)||^2_{L^2(K)}$, if $\alpha>2$, we have that $f\in H^{\alpha-2}(\Omega)\subset L^2(\Omega)$
    \begin{equation}
        \begin{aligned}
            &\sum_{K\in \mathcal{T}_h}||h^{1+\beta}(-\Delta u_h^\dagger-f_h)||^2_{L^2(K)}\\\le &\sum_{K\in \mathcal{T}_h}||h^{1+\beta}(-\Delta u_h^\dagger-f)||^2_{L^2(K)}+ ||h^{1+\beta}(f-f_h)||_{L^2(\Omega)}^2.
        \end{aligned}
    \end{equation}
    The first term is bounded by $Ch^{2(\alpha-1+\beta)}||u^\dagger||_{H^\alpha(\Omega)}$ by \cite[Proposition 3.3]{BL24}. For the second term, since $f_h$ is the $L^2$-projection of $f$, we have
    \begin{equation*}
        ||h^{1+\beta}(f-f_h)||_{L^2(\Omega)}\le C||h^{\alpha-1+\beta} f||_{H^{\alpha-2}(\Omega)}\le C h^{\alpha-1+\beta} ||u^\dagger||_{H^\alpha(\Omega)}.
    \end{equation*}
    Otherwise, if $\alpha\in [1,2]$, then $f\in H^{\alpha-2}(\Omega)\subset H^{-1}(\Omega)$. We have
    \begin{equation}\label{low regularity residual}
        \sum_{K\in \mathcal{T}_h}||h^{1+\beta}(-\Delta u_h^\dagger-f_h)||^2_{L^2(K)}\le \sum_{K\in \mathcal{T}_h}||h^{1+\beta}\Delta u_h^\dagger||^2_{L^2(K)}+ ||h^{1+\beta}f_h||_{L^2(\Omega)}^2.
    \end{equation}
    Again, the first term in (\ref{low regularity residual}) is bounded by using \cite[Proposition 3.3]{BL24}
    \begin{equation}\label{low regularity laplace}
         \sum_{K\in \mathcal{T}_h}||h^{1+\beta}\Delta u_h^\dagger||^2_{L^2(K)}\le Ch^{2(\alpha-1+\beta)}||u^\dagger||^2_{H^\alpha(\Omega)}.
    \end{equation}
    For the second term, by the definition of $f_h$ in (\ref{f_h}), we have
    \begin{equation*}
    \begin{aligned}
        &\langle h^{1+\beta}f_h,f_h\rangle_{L^2(\Omega)}
        = a(h^{1+\beta}u^\dagger, f_h)\\
        =&\int_\Omega\nabla h^{\beta}(u^\dagger-u_h^\dagger)\cdot \nabla hf_hdx+\int_\Omega\nabla h^{1+\beta}u_h^\dagger\cdot \nabla f_hdx\\
         \le &Ch^{\alpha-1+\beta}||u^\dagger||_{H^\alpha(\Omega)}||f_h||_{L^2(\Omega)} \\
         &+ \sum_{F\in \mathcal{F}_I}\int_F h^{1+\beta}\llbracket \nabla u_h^\dagger\cdot n\rrbracket f_hdS+\langle -h^{1+\beta}\Delta u_h^\dagger,f_h\rangle_{L^2(\Omega)}\\
         \le &C(h^{\alpha-1+\beta}||u^\dagger||_{H^\alpha(\Omega)}+ ||h^\beta u_h^\dagger||_{V_h}+ (\sum_{K\in \mathcal{T}_h}||h^{1+\beta}\Delta u_h^\dagger||^2_{L^2(K)})^\frac{1}{2})||f_h||_{L^2(\Omega)}\\
         \le & Ch^{\alpha-1+\beta}||u^\dagger||_{H^\alpha(\Omega)}||f_h||_{L^2(\Omega)}.
    \end{aligned}
\end{equation*}
    The first inequality uses (\ref{interpolantquasi}), (\ref{inv}) and integration by parts. The second inequality uses the Cauchy-Schwarz inequality, trace inequality (\ref{tc2}) for $f_h$ on $F$ and then inverse inequality (\ref{inv}). The last inequality uses (\ref{low regularity laplace}) and (\ref{V_hbound}) from step 1. By dividing by$||f_h||_{L^2(\Omega)}$ in both sides we obtain
    \begin{equation*}
        ||h^{1+\beta}f_h||_{L^2(\Omega)} \le Ch^{\alpha-1+\beta}||u_h^\dagger||_{H^\alpha(\Omega)}.
    \end{equation*}

    \textbf{Step 3.} Finally we bound $||h^\beta (-\Delta u^\dagger_h-f)||^2_{W_h^*}$. By definition (\ref{W_h^*}), we have for any $w_h\in W_h$
    \begin{equation*}
    \begin{aligned}
        &h^\beta \Delta u^\dagger_h(w_h)-h^\beta f(w_h) \\=& a(h^\beta(u_h^\dagger - u^\dagger), w_h)\\
        = &\sum_{F\in \mathcal{F}_I}\int_Fh^\beta(u_h^\dagger - u^\dagger)\llbracket \nabla w_h\cdot n\rrbracket dS- \sum_{K\in \mathcal{T}_h}\int_Kh^\beta(u_h^\dagger - u^\dagger)\Delta w_h\\
        \le & Ch^\beta||u_h^\dagger-u^\dagger||_{H^1(\Omega)}(J_h(w_h,w_h)^\frac{1}{2}+ (\sum_{K\in \mathcal{T}_h}||h\Delta w_h||^2)^\frac{1}{2})\\
        \le &Ch^{\alpha-1+\beta}||u^\dagger||_{H^\alpha(\Omega)}||w_h||_{W_h}.
        \end{aligned}
    \end{equation*}
    Therefore, we obtain
    \begin{equation*}
        ||h^\beta(-\Delta u^\dagger_h-f)||_{W_h^*}\le Ch^{\alpha-1+\beta} ||u^\dagger||_{H^\alpha(\Omega)}.
    \end{equation*}
\end{proof}

Let $s_h$ be defined as in (\ref{s_h}). We define the triple norm $|||\cdot|||_{\beta}$ on $V_h$ by 
\begin{equation}\label{triplenormhN}
    |||v_h|||_{\beta}^2 = \frac{1}{N}||v_h||^2_{\Sigma^{-1}}+s_h(h^\beta v_h,h^{\beta}v_h)+ ||h^{\beta}\Delta v_h||_{W_h^*}^2.
\end{equation}

\begin{lemma}\label{triplenormbound}
    Let the norm $|||\cdot|||_{\beta}$ be defined as in (\ref{triplenormhN}), and $u_\beta$ be the solution of (\ref{u_hequation}). Then there exists some constant $C>0$ independent of $h$ and $N$, such that for any given samples $\{X^{(i)}\}_{i=1}^N$,
    \begin{equation}
     |||u_\beta-u_h^\dagger|||^2_{\beta}\le C(h^{2(\alpha-1+\beta)}||u^\dagger||_{H^\alpha(\Omega)}^2+\frac{1}{N}||u_h^\dagger-u^\dagger||_{\Sigma^{-1}}^2).
    \end{equation}
\end{lemma}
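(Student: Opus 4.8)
The plan is to run a C\'ea-type Galerkin estimate directly in the triple norm $|||\cdot|||_{\beta}$. The starting observation is that the bilinear form appearing on the left of (\ref{u_hequation}),
\[
\mathcal{A}_\beta(u,v):=\frac1N\langle u,v\rangle_{\Sigma^{-1}}+s_h(h^\beta u,h^\beta v)+\langle h^\beta\Delta u,h^\beta\Delta v\rangle_{W_h^*},
\]
is symmetric on $V_h$ and satisfies $\mathcal{A}_\beta(v_h,v_h)=|||v_h|||_{\beta}^2$ by the definitions (\ref{triplenormhN}), (\ref{s_h}), (\ref{W_h^*}); here the pairing $\langle h^\beta\Delta u,h^\beta\Delta v\rangle_{W_h^*}$, and likewise $\langle h^\beta f,h^\beta\Delta v\rangle_{W_h^*}$, is understood through the Riesz representatives in $W_h$ of the functionals $w_h\mapsto a(\cdot,w_h)$ and $w_h\mapsto f(w_h)$. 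Thus coercivity in $|||\cdot|||_\beta$ comes for free and no inf--sup argument is needed.

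The second step is the error identity. Set $e_h:=u_\beta-u_h^\dagger\in V_h$. Writing $|||e_h|||_{\beta}^2=\mathcal{A}_\beta(e_h,e_h)=\mathcal{A}_\beta(u_\beta,e_h)-\mathcal{A}_\beta(u_h^\dagger,e_h)$, using (\ref{u_hequation}) together with $-\Delta u^\dagger=f$ on $\Omega$ to evaluate $\mathcal{A}_\beta(u_\beta,e_h)$, and expanding $s_h$ via (\ref{s_h}), one is left with
\[
\begin{aligned}
|||e_h|||_{\beta}^2={}&\frac1N\langle u^\dagger-u_h^\dagger,e_h\rangle_{\Sigma^{-1}}-\langle h^\beta u_h^\dagger,h^\beta e_h\rangle_{V_h}\\
&+\sum_{K\in\mathcal{T}_h}\langle h^{1+\beta}(-\Delta u_h^\dagger-f_h),h^{1+\beta}\Delta e_h\rangle_{L^2(K)}+\langle h^\beta(-\Delta u_h^\dagger-f),h^\beta\Delta e_h\rangle_{W_h^*},
\end{aligned}
\]
the signs being immaterial in what follows. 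The last three right-hand side quantities are exactly the interpolation consistency residuals of $u_h^\dagger=\mathcal{I}_hu^\dagger$ that are estimated in \cref{three norms}.

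The last step is routine. Estimate each of the four terms by Cauchy--Schwarz in the appropriate inner product (the $\Sigma^{-1}$-weighted one, the $V_h$ one, the broken $L^2$ one, and the $W_h^*$ one), and absorb the $e_h$-factor using the bounds $\frac1N||e_h||_{\Sigma^{-1}}^2,\ ||h^\beta e_h||_{V_h}^2,\ \sum_{K}||h^{1+\beta}\Delta e_h||_{L^2(K)}^2,\ ||h^\beta\Delta e_h||_{W_h^*}^2\le|||e_h|||_{\beta}^2$, each of which holds because the quantity in question is one of the summands of $|||e_h|||_{\beta}^2$ through (\ref{triplenormhN}) and (\ref{s_h}). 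Dividing by $|||e_h|||_{\beta}$ (the case $|||e_h|||_{\beta}=0$ being trivial) and squaring with $(a+b+c+d)^2\le4(a^2+b^2+c^2+d^2)$ bounds $|||e_h|||_{\beta}^2$ by a constant times $\frac1N||u^\dagger-u_h^\dagger||_{\Sigma^{-1}}^2$ plus the sum of the three residual terms, and \cref{three norms} bounds that sum by $Ch^{2(\alpha-1+\beta)}||u^\dagger||_{H^\alpha(\Omega)}^2$; relabelling the constant gives the claim. I do not expect a real obstacle here; the only points requiring care are the bookkeeping of the $h^\beta$-weights and signs when converting (\ref{u_hequation}) into the error identity, and making the $W_h^*$-pairings precise through Riesz representatives so that the Cauchy--Schwarz step there is legitimate.
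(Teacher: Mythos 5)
Your proposal is correct and is essentially the same argument as the paper's: test the error equation (\ref{u_hequation}) with $v_h = e_h = u_\beta - u_h^\dagger$, expand the resulting identity to expose the consistency residuals, absorb via Cauchy--Schwarz (the paper uses Young's inequality with weight $\tfrac12$, you divide and use $(a+b+c+d)^2\le 4(a^2+b^2+c^2+d^2)$, which is equivalent bookkeeping), and invoke \cref{three norms}. Packaging the left-hand side as the symmetric bilinear form $\mathcal{A}_\beta$ and flagging the Riesz-representative interpretation of the $W_h^*$ pairings are cosmetic clarifications rather than a different route.
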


\begin{proof}
  Let $v_h = u_\beta-u_h^\dagger$, then by (\ref{u_hequation}), we have
\begin{equation*}
\begin{aligned}
    &|||u_\beta-u_h^\dagger|||^2_{\beta} \\= &\frac{1}{N}\langle u^\dagger-u_h^\dagger, u_\beta-u_h^\dagger\rangle_{\Sigma^{-1}}-\sum_{K\in \mathcal{T}_h}\langle h^{1+\beta}(\Delta u_h^\dagger+f_h),h^{1+\beta}\Delta (u_\beta-u_h^\dagger)\rangle_{L^2(K)}\\
    &-J_h(h^{\beta}u_h^\dagger,h^{\beta}(u_\beta-u_h^\dagger))-\langle h^{\alpha-1+\beta} u_h^\dagger, h^{\alpha-1+\beta} (u_\beta-u_h^\dagger)\rangle_{H^1(\Omega)}\\
    &- \langle h^{\beta}(\Delta u_h^\dagger+f), h^{\beta}\Delta (u_\beta-u_h^\dagger)\rangle_{W_h^*} \\
    \le & \frac{1}{2}|||u_\beta-u_h^\dagger|||^2_{\beta} + \frac{1}{2N}|| u^\dagger-u_h^\dagger||^2_{\Sigma^{-1}} +\frac{1}{2}||h^{\beta}u_h^\dagger||_{V_h}^2\\
    &+ \frac{1}{2}||h^{\beta}(-\Delta u^\dagger_h-f)||^2_{W_h^*}+\frac{1}{2}\sum_{K\in \mathcal{T}_h}||h^{1+\beta}(-\Delta u_h^\dagger-f_h)||^2_{L^2(K)}.
    \end{aligned}
\end{equation*}
Therefore by \cref{three norms},
\begin{equation*}
\begin{aligned}
    |||u_\beta-u_h^\dagger|||^2_{\beta} 
    &\le Ch^{2(\alpha-1+\beta)}||u^\dagger||_{H^\alpha(\Omega)} + \frac{1}{N}|| u^\dagger-u_h^\dagger||^2_{\Sigma^{-1}}.
    \end{aligned}
\end{equation*}
This completes the proof.
\end{proof}
\begin{lemma}\label{operatornormsconv}
There exists some constant $C>0$ independent of $h$ and $N$, such that for any set of samples $\{X^{(i)}\}_{i=1}^N$,
    \begin{equation*}
        ||\Delta(u_\beta-u_h^\dagger)||^2_{H^{-1-\beta}(\Omega)} \le C(h^{2(\alpha-1+\beta)} ||u^\dagger||^2_{H^\alpha(\Omega)}+\frac{1}{N}|| u^\dagger-u_h^\dagger||^2_{\Sigma^{-1}}).
    \end{equation*}
\end{lemma}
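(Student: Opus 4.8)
The goal is to convert the control on the triple norm $|||u_\beta - u_h^\dagger|||_\beta^2$ from \cref{triplenormbound} into a bound on the negative Sobolev norm $\|\Delta(u_\beta - u_h^\dagger)\|_{H^{-1-\beta}(\Omega)}$. The natural strategy is to test the weak form of $\Delta(u_\beta - u_h^\dagger)$ against an arbitrary $w \in H^{1+\beta}_0(\Omega)$ (with unit norm), split $w$ into a discrete part $w_h = \mathcal{I}_h w \in W_h$ and the interpolation remainder $w - w_h$, and estimate the two contributions separately. First I would write, for $\beta = 0$, $\Delta(u_\beta - u_h^\dagger)(w) = a(u_\beta - u_h^\dagger, w)$ understood element-wise plus the jump terms; the smooth part $w_h$ is handled by \cref{u_hequation} rewritten with $u_h^\dagger$ subtracted (so it sees exactly the residual quantities controlled in \cref{triplenormbound}), while the remainder $w - w_h$ is small, of order $h^{1+\beta}$ in $H^{1+\beta}$-type norms, by the approximation properties (\ref{interpolantquasi}) of the quasi-interpolant and inverse/trace inequalities (\ref{inv}), (\ref{tc2}) from \cref{apex:feminequality}.

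The key steps, in order, are: (i) fix $w$ with $\|w\|_{H^{1+\beta}_0(\Omega)} \le 1$ and decompose $w = w_h + (w - w_h)$ with $w_h = \mathcal{I}_h w$; (ii) for the discrete test function $w_h$, use \cref{u_hequation} to replace $\frac{1}{N}\langle u_\beta - u_h^\dagger, w_h\rangle_{\Sigma^{-1}}$ and the stabilization terms, so that $\Delta(u_\beta - u_h^\dagger)(w_h)$ is expressed through the quantities $s_h(h^\beta(u_\beta - u_h^\dagger), \cdot)$, the $W_h^*$-pairing, and the $\frac{1}{N}\langle u^\dagger - u_h^\dagger, \cdot\rangle_{\Sigma^{-1}}$ term — each of which is either a factor in $|||u_\beta - u_h^\dagger|||_\beta$ or already estimated in \cref{triplenormbound}, after pairing with appropriate inverse-inequality bounds on $w_h$; (iii) for the remainder $w - w_h$, integrate by parts element-wise, collect the interior facet jumps $\llbracket \nabla(u_\beta - u_h^\dagger)\cdot n\rrbracket$ against $(w - w_h)$ and the element Laplacian terms $\Delta(u_\beta - u_h^\dagger)|_K$ against $(w-w_h)$, and bound these using the $J_h$-part and the $L^2(K)$-part of the triple norm together with the interpolation estimate $\|w - w_h\|_{L^2(F)} + h\|w-w_h\|_{L^2(K)}$-type bounds scaled to absorb the $h^\beta$ and $h^{1+\beta}$ weights; (iv) combine, take the supremum over $w$, and invoke \cref{triplenormbound} to obtain the stated right-hand side $C(h^{2(\alpha-1+\beta)}\|u^\dagger\|_{H^\alpha(\Omega)}^2 + \frac1N\|u^\dagger - u_h^\dagger\|_{\Sigma^{-1}}^2)$.

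The main obstacle will be step (ii): the term $\frac{1}{N}\langle u_\beta - u_h^\dagger, w_h\rangle_{\Sigma^{-1}}$ is a pointwise (empirical) pairing, not an $L^2$-pairing, so bounding it by $|||u_\beta - u_h^\dagger|||_\beta \cdot (\text{something involving } w_h)$ requires controlling $\frac1N\|w_h\|_{\Sigma^{-1}}^2$ by $\|w_h\|_{H^{1+\beta}}^2$ or an inverse-inequality surrogate — and one must check this does not introduce $h$-powers that spoil the rate. One resolves this by noting $\frac1N\|w_h\|_{\Sigma^{-1}}^2 \lesssim \frac1N \sum_i w_h(X^{(i)})^2 \lesssim \|w_h\|_{L^\infty}^2 \lesssim \|w_h\|_{H^{1+\beta}}^2$ for $\beta \ge 0$ in dimension $d$ with $1+\beta > d/2$ (or using the discrete $\ell^2$-to-$L^2$ comparison valid on $V_h$ for the relevant mesh scales), so the empirical pairing contributes at most a bounded constant times $|||u_\beta - u_h^\dagger|||_\beta$. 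A secondary point of care is tracking the exact $h$-weights: the $\beta$-dependent powers in $|||\cdot|||_\beta$ and in the interpolation bounds for $w - w_h$ must match so that, after multiplying out, every term carries $h^{2(\alpha-1+\beta)}$ (or is the $\frac1N$-term), and nothing worse appears; this is the routine but error-prone bookkeeping that the proof must execute carefully.
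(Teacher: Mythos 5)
Your overall template — test $\Delta(u_\beta-u_h^\dagger)$ against a smooth $w$, split $w = w_h + (w-w_h)$ with $w_h = \mathcal I_h w$, integrate by parts on the remainder and close with \cref{triplenormbound} — matches the paper's decomposition, and your step (iii) is essentially the paper's Step~2. The genuine gap is in your step (ii).

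You propose to handle the discrete part $a(u_\beta - u_h^\dagger, w_h)$ by testing the Euler--Lagrange identity \cref{u_hequation} with $v_h = w_h$ and then reading off the residual terms. This does not give you what you want. The relevant term in \cref{u_hequation} is $\langle h^\beta(\Delta u_\beta + f), h^\beta\Delta w_h\rangle_{W_h^*}$, which after subtracting the $u_h^\dagger$-version is the $W_h^*$-\emph{inner product} $\langle h^\beta\Delta(u_\beta - u_h^\dagger), h^\beta\Delta w_h\rangle_{W_h^*}$, i.e.\ $\langle z, z'\rangle_{W_h}$ with $z, z'$ the Riesz representatives of $\Delta(u_\beta-u_h^\dagger)$ and $\Delta w_h$ in $W_h$. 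The quantity you actually need is the duality pairing $a(u_\beta-u_h^\dagger, w_h) = \langle z, w_h\rangle_{W_h}$, which is \emph{not} $\langle z, z'\rangle_{W_h}$. So \cref{u_hequation} simply does not isolate $a(u_\beta-u_h^\dagger, w_h)$, and the chain of substitutions you sketch does not produce the claimed representation. The correct (and much shorter) step is to invoke the definition (\ref{W_h^*}) of the $W_h^*$-norm directly: $a(u_\beta-u_h^\dagger, w_h) \le h^{-\beta}\,\|h^\beta\Delta(u_\beta-u_h^\dagger)\|_{W_h^*}\,\|w_h\|_{W_h} \le h^{-\beta}\,|||u_\beta-u_h^\dagger|||_\beta\,\|w_h\|_{W_h}$, and then estimate $\|w_h\|_{W_h}$ by $C\|w\|_{H^1}$ \emph{and} by $Ch\|w\|_{H^2}$, the latter using the crucial observation that $J_h(w,w)=0$ and $\nabla w|_{\partial\Omega}=0$ for $w\in C_0^\infty(\Omega)$, so that $J_h(w_h,w_h)$ and the boundary flux term can be replaced by $w_h - w$ and estimated by (\ref{interpolantquasi}) at order $h^2\|w\|^2_{H^2}$.

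As a consequence, the ``main obstacle'' you identify — controlling the empirical pairing $\tfrac1N\|w_h\|_{\Sigma^{-1}}^2$ — does not arise in the actual argument; it is an artifact of the incorrect use of \cref{u_hequation}. Moreover, your proposed fix would not hold up: the bound $\|w_h\|_{L^\infty}\lesssim\|w_h\|_{H^{1+\beta}}$ requires $1+\beta > d/2$, which already fails for $\beta=0$, $d\ge 2$, and a discrete inverse inequality would introduce extra negative powers of $h$. Also note the lemma must hold deterministically ``for any set of samples'', which makes any reliance on the random empirical norm of $w_h$ suspicious on its face. In short: keep the decomposition and step (iii), but replace step (ii) by the direct $W_h^*$-duality bound plus the two-level ($H^1$ and $h\cdot H^2$) estimate of $\|w_h\|_{W_h}$.
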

\begin{proof}
    By the definition of $\Delta (\cdot)$ as an operator, we only need to bound $a(u_\beta-u_h^\dagger,w)$ for $w\in C_0^\infty(\Omega)$. Let $w_h = \mathcal{I}_hw$, we have $a(u_\beta-u_h^\dagger,w) = a(u_\beta-u_h^\dagger,w-w_h)+a(u_\beta-u_h^\dagger,w_h)$. 
    
    \textbf{Step 1.} For the second term, we have
    \begin{equation}\label{residualH1H2bound}
    \begin{aligned}
        &a(u_\beta-u_h^\dagger,w_h)\\
        \le &h^{-\beta}||h^\beta\Delta(u_\beta-u_h^\dagger)||_{W_h^*}||w_h||_{W_h}\\
        \le &h^{-\beta}|||u_\beta-u_h^\dagger|||_{\beta}\left(J_h(w_h,w_h)+\int_{\partial\Omega}h|\nabla w_h\cdot n|^2dS + ||hw_h||_{H^1(\Omega)}^2\right)^\frac{1}{2}.
        \end{aligned}
    \end{equation}
Recall the definition of $||\cdot||_{W_h}$ in (\ref{discrete norms}). By \cref{J_hbound}, (\ref{tc2}) and (\ref{interpolantquasi}), we have
    \begin{equation*}
        ||w_h||_{W_h}\le C||w_h||_{H^1(\Omega)}\le C||w||_{H^1(\Omega)}.
    \end{equation*}
Moreover, since $w\in C_0^\infty(\Omega) $, we have that $\nabla w|_{\partial \Omega}=0$, and
\begin{equation*}
    J_h(w,w_h) = J_h(w,w) = 0.
\end{equation*}
Therefore, by using (\ref{tc1}) and (\ref{interpolantquasi}), we have
\begin{equation*}
\begin{aligned}
    J_h(w_h,w_h)+\int_{\partial\Omega}h|\nabla w_h\cdot n|^2dS &= J_h(w_h-w,w_h-w)+\int_{\partial\Omega}h|\nabla (w_h-w)\cdot n|^2dS\\
    &\le C(||w_h-w||^2_{H^1(\Omega)}+h^2||w_h-w||^2_{H^2(\Omega)})\\
    &\le Ch^2||w||^2_{H^2(\Omega)}.
    \end{aligned}
\end{equation*}

From the above discussion, we obtain the two bounds with respect to $H^1$ and $H^2$ norms.
\begin{equation}\label{w_hboundH1H2}
    \begin{array}{rcl}
     ||w_h||_{W_h} &\le &  C||w||_{H^1(\Omega)},    \\[2mm]
     ||w_h||_{W_h} &\le &Ch||w||_{H^2(\Omega)}.
    \end{array}
\end{equation}
Using (\ref{w_hboundH1H2}) and (\ref{residualH1H2bound}), for $\beta=0,1$, we have
\begin{equation}\label{step1}
\begin{array}{cc}
   a(u_\beta-u_h^\dagger,w_h) \le C|||u_\beta-u_h^\dagger|||_{\beta}||w_h||_{H^{\beta+1}(\Omega)}.
\end{array}
\end{equation}
\textbf{Step 2.} For the first term, through integration by parts, we have
\begin{equation*}
\begin{aligned}
    a(u_\beta-u_h^\dagger,w-w_h) & = \sum_{F\in \mathcal{F}_I}\int_F h^{-(\beta+\frac{1}{2})}(w-w_h)h^\frac{1}{2}\llbracket \nabla h^\beta(u_\beta-u_h^\dagger)\cdot n\rrbracket dS \\
    &- \sum_{K\in \mathcal{T}_h}\langle h^{1+\beta}\Delta (u_\beta-u_h^\dagger), h^{-\beta-1}(w-w_h)\rangle_{L^2(K)}\\
        &\le C\left(\sum_{m=0}^1||h^{-\beta-1+m}(w-w_h)||_{H^m(\Omega)}\right)|||u_\beta-u_h^\dagger|||_{\beta}.
\end{aligned}
\end{equation*}
The inequality uses the Cauchy-Schwarz inequality and then the trace inequality (\ref{tc1}). It then follows by the definition of the triple norm. Now by the approximation inequality (\ref{interpolantquasi}), we obtain that for $\beta=0,1$,
\begin{equation}\label{step2}
\begin{array}{cc}
   a(u_\beta-u_h^\dagger,w-w_h) \le Ch^{-\beta}|||u_\beta-u_h^\dagger|||_{\beta}||w_h||_{H^{1+\beta}(\Omega)}.\\
\end{array}
\end{equation}
Finally, combining (\ref{step1}) and (\ref{step2}), together with \cref{triplenormbound}, we obtain the desired result.
\end{proof}
\begin{lemma}\label{montecarloconv}
    There exists some constant $C>0$ independent of $h$ and $N$, such that
    \begin{equation}\label{probabilityresidual}
    \begin{aligned}
        &P_{D_N}(\frac{1}{N}||u_h^\dagger - u^\dagger||_{\Sigma^{-1}}^2\ge Ch^{2(\alpha-1+\beta)}||u^\dagger||_{H^\alpha(\Omega)}^2) 
        \le \frac{C}{Nh^{2(\alpha-1+\beta)}}.
        \end{aligned}
    \end{equation}
\end{lemma}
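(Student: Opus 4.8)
The plan is to bound the bad event with a second–moment (Chebyshev) estimate for a Monte Carlo average. Observe first that $\frac1N\|u_h^\dagger-u^\dagger\|_{\Sigma^{-1}}^2$ depends only on the sample locations $X_N=(X^{(1)},\dots,X^{(N)})$ — the interpolant $u_h^\dagger=\mathcal I_h u^\dagger$, the truth $u^\dagger$, and the matrix $\Sigma$ are all deterministic — so $P_{D_N}$ in the statement is just $P_{X_N}$. Since the eigenvalues of $\Sigma$ lie in $[\sigma^2_{min},\sigma^2_{max}]$ we have $\Sigma^{-1}\preceq\sigma^{-2}_{min}I_N$, so, with $\zeta_i:=(u_h^\dagger-u^\dagger)(X^{(i)})$ (which are i.i.d. because the $X^{(i)}$ are),
\begin{equation*}
  \frac1N\|u_h^\dagger-u^\dagger\|_{\Sigma^{-1}}^2\;\le\;\frac{\sigma^{-2}_{min}}{N}\sum_{i=1}^N\zeta_i^2\;=:\;Y_N ,
\end{equation*}
and it suffices to bound $P_{X_N}\!\big(Y_N\ge C h^{2(\alpha-1+\beta)}\|u^\dagger\|_{H^\alpha(\Omega)}^2\big)$, with $Y_N$ an empirical average of i.i.d. nonnegative variables; everything then reduces to its mean and variance.

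For the mean, $\mathbb E^{X_N}Y_N=\sigma^{-2}_{min}\mathbb E[\zeta_1^2]=\sigma^{-2}_{min}\int_\omega(u_h^\dagger-u^\dagger)^2\,\mathrm d\lambda\le C\|u_h^\dagger-u^\dagger\|_{L^2(\omega)}^2$, where the last step uses that $\lambda$ is equivalent to Lebesgue measure on $\omega$ with bounded density; by the approximation property (\ref{interpolantquasi}) of the quasi–interpolant this is $\le C\|u^\dagger-\mathcal I_h u^\dagger\|_{L^2(\Omega)}^2\le Ch^{2\alpha}\|u^\dagger\|_{H^\alpha(\Omega)}^2$. Since $\beta\le1$ and we may take $h\le1$, $h^{2\alpha}\le h^{2(\alpha-1+\beta)}$, hence $\mathbb E^{X_N}Y_N\le C_0h^{2(\alpha-1+\beta)}\|u^\dagger\|_{H^\alpha(\Omega)}^2$; fixing the constant in the statement to be $C\ge2C_0$, the threshold then exceeds twice the mean, so the event in question is a deviation event for $Y_N$ about its mean. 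For the variance, independence gives $\operatorname{Var}(Y_N)=\sigma^{-4}_{min}N^{-1}\operatorname{Var}(\zeta_1^2)\le\sigma^{-4}_{min}N^{-1}\mathbb E[\zeta_1^4]$, and
\begin{equation*}
  \mathbb E[\zeta_1^4]=\int_\omega(u_h^\dagger-u^\dagger)^4\,\mathrm d\lambda\le C\,\|u_h^\dagger-u^\dagger\|_{L^\infty(\omega)}^2\,\|u_h^\dagger-u^\dagger\|_{L^2(\omega)}^2\le C\,h^{2\alpha},
\end{equation*}
where in the last bound the $L^2$ estimate above is used and $\|u_h^\dagger-u^\dagger\|_{L^\infty(\omega)}$ is bounded by a constant depending only on the fixed $u^\dagger$ (via $u^\dagger\in C^0(\Omega)$ and $L^\infty$–stability of the quasi–interpolant). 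Hence $\operatorname{Var}(Y_N)\le CN^{-1}h^{2\alpha}$.

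Combining these: with $C\ge2C_0$ one has $\{Y_N\ge Ch^{2(\alpha-1+\beta)}\|u^\dagger\|_{H^\alpha}^2\}\subseteq\{|Y_N-\mathbb E^{X_N}Y_N|\ge\tfrac12Ch^{2(\alpha-1+\beta)}\|u^\dagger\|_{H^\alpha}^2\}$, so Chebyshev's inequality yields
\begin{equation*}
  P_{X_N}\!\big(Y_N\ge Ch^{2(\alpha-1+\beta)}\|u^\dagger\|_{H^\alpha}^2\big)\;\le\;\frac{4\operatorname{Var}(Y_N)}{C^2h^{4(\alpha-1+\beta)}\|u^\dagger\|_{H^\alpha}^4}\;\le\;\frac{C'}{N}\,h^{2\alpha-4(\alpha-1+\beta)}.
\end{equation*}
Because $2\alpha-4(\alpha-1+\beta)=-2(\alpha-1+\beta)+(2-2\beta)\ge-2(\alpha-1+\beta)$ for $\beta\le1$, and $h\le1$, we get $h^{2\alpha-4(\alpha-1+\beta)}\le h^{-2(\alpha-1+\beta)}$, so $P_{X_N}(\cdots)\le C'N^{-1}h^{-2(\alpha-1+\beta)}$, which is the assertion after renaming $C'$ to $C$.

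The passage to Lebesgue measure on $\omega$ and the mean computation are routine. The delicate point is the fourth–moment estimate entering the variance: the $L^2$–approximation property of $\mathcal I_h$ alone does not suffice, one must also control $\|u_h^\dagger-u^\dagger\|_{L^\infty(\omega)}$ (equivalently $\|u_h^\dagger-u^\dagger\|_{L^4(\omega)}$) by a fixed quantity, which is where $u^\dagger\in C^0(\Omega)$ and $L^\infty$–stability of the interpolant are used. Note finally that a plain Markov inequality on $Y_N$ would give only the much weaker bound $\mathcal O(h^{2(1-\beta)})$, which is vacuous when $\beta=1$; the factor $1/N$ in the statement comes precisely from the concentration of the empirical average, i.e.\ from the variance term.
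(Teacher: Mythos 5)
Your proof is correct and follows essentially the same route as the paper's: reduce to the i.i.d.\ Monte Carlo average $I_N$ (your $Y_N$) using $\Sigma^{-1}\preceq\sigma_{\min}^{-2}I_N$, then bound its tail via a second-moment/Chebyshev estimate, controlling the fourth moment of $u_h^\dagger-u^\dagger$ by interpolating between the $L^2$ approximation bound and $L^\infty$-stability of the quasi-interpolant, and finally using $h\le1$, $\beta\le1$ to pass from $h^{2\alpha}$ to $h^{2(\alpha-1+\beta)}$ exponents. The only difference is that you make the centering step explicit (first bounding the mean by $C_0 h^{2(\alpha-1+\beta)}\|u^\dagger\|_{H^\alpha}^2$ and then requiring $C\ge 2C_0$ so that the threshold exceeds twice the mean, which justifies replacing the one-sided tail by a deviation event), whereas the paper applies Chebyshev directly with $\mathrm{Var}(\sqrt{N}I_N)$ in the numerator without explicitly comparing the threshold to the mean; your version is slightly more careful on this point but arrives at the same bound, and your closing remark that a first-moment Markov bound is insufficient (losing the $1/N$) is accurate and explains why the variance enters.
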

Since $u_h^\dagger$ and $u^\dagger$ are both non-random, the random variable $\frac{1}{N}||u_h^\dagger - u^\dagger||_{\Sigma^{-1}}^2$ depends only on $X_N$. Therefore for any Borel measurable set $A \subset \mathbb{R}$, 
\begin{equation*}
P_{D_N}(N^{-1}||u_h^\dagger - u^\dagger||_{\Sigma^{-1}}^2 \in A) = P_{X_N}(N^{-1}||u_h^\dagger - u^\dagger||_{\Sigma^{-1}}^2\in A) .  
\end{equation*}
\begin{proof}
Recall that the smallest eigenvalue of $\Sigma$ is uniformly bounded from below by some $\sigma_{min}^2>0$. Let 
\begin{equation}\label{I_N}
\begin{array}{cc}
     I_N = \frac{1}{ N}\sum_{i=1}^N(u_h^\dagger-u^\dagger)^2(X^{(i)}) .
\end{array}
\end{equation}
we have
\begin{equation*}
    \frac{1}{N}||u_h^\dagger - u^\dagger||_{\Sigma^{-1}}^2\le \frac{1}{\sigma_{min}^2}I_N.
    \end{equation*}
Absorbing the factor $\sigma^2_{min}$ by the constant $C$, the probability in (\ref{probabilityresidual}) is bounded by
\begin{equation}
    P_{D_N}(I_N\ge Ch^{2(\alpha-1+\beta)}||u^\dagger||_{H^\alpha(\Omega)}^2).
\end{equation}
By Chebyshev’s inequality (\ref{eq:cheb}), we obtain
\begin{equation}\label{cheb}
\begin{aligned}
    &P_{D_N}(I_N\ge Ch^{2(\alpha-1+\beta)}||u^\dagger||_{H^\alpha(\Omega)}^2) \\
    =& P_{D_N} (\frac{1}{ \sqrt{N}}\sum_{i=1}^N(u_h^\dagger-u^\dagger)^2(X^{(i)})\ge C h^{2(\alpha-1+\beta)}\sqrt{N}||u^\dagger||_{H^\alpha(\Omega)}^2)\\
    \le &\frac{Var(\sqrt{N}I_N)}{CNh^{4(\alpha-1+\beta)}||u^\dagger||_{H^\alpha(\Omega)}^4}\\
    \end{aligned}
\end{equation}
Since $\lambda$ is equivalent to the Lebesgue measure on $\omega$, there exists some $C>0$, such that
\begin{equation}\label{eqmeasure}
\begin{array}{cc}
    \int_\omega |f|\space\lambda(dx)\le C \int_\omega |f|dx, & \forall f\in L^1(\omega).
    \end{array}
\end{equation}
Using (\ref{eqmeasure}), Jensen's inequality, H\"older's inequality, the approximation inequality (\ref{interpolantquasi}), and the stability inequality (\ref{interpolantLinfty}), we obtain
\begin{equation}\label{varbound}
\begin{aligned}
    Var(\sqrt{N}I_N) &= \frac{1}{N}\mathbb{E}^{X_N}\left(\sum_{i=1}^N(u_h^\dagger-u^\dagger)^2(X^{(i)})- N\int_\omega (u_h^\dagger-u^\dagger)^2\lambda(dx)\right)^2\\
    &\le \frac{1}{N}\sum_{i=1}^N\mathbb{E}^{X^{(i)}}\left((u_h^\dagger-u^\dagger)^2(X^{(i)})-\int_\omega (u_h^\dagger-u^\dagger)^2\lambda(dx)\right)^2\\
    &\le \mathbb{E}^{X^{(i)}}\left( (u_h^\dagger-u^\dagger)^4(X^{(i)})\right)- (\int_\omega (u_h^\dagger-u^\dagger)^2\lambda(dx))^2\\
    &\le C||u_h^\dagger - u^\dagger||_{L^4(\omega)}^4\\
    &\le C||u_h^\dagger - u^\dagger||_{L^2(\Omega)}^2||u_h^\dagger - u^\dagger||_{L^\infty(\Omega)}^2\\
    &\le Ch^{2\alpha}||u^\dagger||_{H^\alpha(\Omega)}^2||u^\dagger||_{L^\infty(\Omega)}^2
    \end{aligned}
\end{equation}

Therefore, using (\ref{cheb}) and the fact that $u^\dagger\in H^\alpha(\Omega)\cap C^0(\Omega)$ and $\beta\le 1$, we obtain
\begin{equation*}
     P_{D_N}(I_N\ge Ch^{2(\alpha-1+\beta)}||u||_{H^\alpha(\Omega)}^2) \le \frac{C}{Nh^{2\alpha+4(\beta-1)}}\le \frac{C}{Nh^{2(\alpha-1+\beta)}}.
\end{equation*}
This ends the proof.
\end{proof}
\begin{co}\label{bound of residuals}
    Let $u_\beta$ solve (\ref{u_hequation}) with $\beta = 0$ or $1$ and $D_N = \{(X^{(i)},y^{(i)})\}_{i=1}^N$. There exists some constant $C>0$ independent of $h$ and $N$, such that
     \begin{equation*}
     \begin{aligned}
          &P_{D_N}(||\Delta(u_\beta-u_h^\dagger)||^2_{H^{-1-\beta}(\Omega)} \ge Ch^{2(\alpha-1+\beta)}||u^\dagger||_{H^\alpha(\Omega)}^2)
          \le \frac{C}{Nh^{2(\alpha-1+\beta)}}.
          \end{aligned}
     \end{equation*}
\end{co}
\begin{proof}
    It can be shown by combining \cref{operatornormsconv} and \cref{montecarloconv}.
\end{proof}

\cref{bound of residuals} provides a convergence rate for $\Delta(u_\beta-u_h^\dagger)$ with respect to $h$ under $H^{-1}$ and $H^{-2}$ norms ($\beta = 0,1$). In view of \cref{three-balls}, we also need to give the convergence rate for $u_\beta-u_h^\dagger$ under the $L^2(\omega)$ semi-norm.
\begin{prop}
    For any $v_h\in V_h$, there exists a constant $C>0$ independent of $h$, such that
    \begin{equation}\label{Linftybound}
        ||v_h||^2_{L^\infty(\Omega)} \le Ch^{-d}||v_h||_{L^2(\Omega)}^2.
    \end{equation}
\end{prop}
\begin{proof}
We only need to show that (\ref{Linftybound}) holds for a basis of $V_h$. We begin by selecting a set of Lagrange basis functions $\{\phi_1, \phi_2,...,\phi_n\}$, constructed via a geometric mapping (see, e.g. \cite[(28.13)]{ern2021finite2})  from the standard equispaced Lagrange polynomials defined on a reference simplex $\hat{K}$. Clearly on the reference simplex, there exists a uniform $C>0$, such that for any such Lagrange polynomial $\phi$ we have
\begin{equation*}
    ||\phi||^2_{L^\infty(\hat{K})} \le C||\phi||_{L^2(\hat{K})}^2,
\end{equation*}
by norm equivalence. For any $K\in \mathcal{T}_h$ with diameter $h$, the change of variable leads to the factor $h^{-d}$, and thus (\ref{Linftybound}) holds for every basis function $\phi_j$.
\end{proof}
\begin{lemma}[Empirical process in the finite element space]\label{empirical bound}\\
    There exists some $c\in (0,1)$, and $m>0$ small enough, all of which are independent of $h$ and $N$, such that there exists some Borel measurable set $\Theta_N\subset \{X_N\in \mathbb{R}^N:\forall v_h\in V_h, \frac{1}{N}\sum_{i=1}^Nv_h^2(X^{(i)})\ge c ||v_h||^2_{L^2(\omega)}\}$ satisfying
    \begin{equation}
        P_{D_N}(\Theta_N)\ge 1-h^{-d}\exp(-mNh^{d}).
    \end{equation}
\end{lemma}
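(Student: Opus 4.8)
The plan is to establish a uniform lower bound on the empirical $L^2(\omega)$ norm over the finite-dimensional space $V_h$ by a covering/chaining argument combined with a union bound, and then to track carefully how the mesh parameter $h$ enters the exceptional probability. First I would reduce to the unit sphere: by homogeneity it suffices to show that with high probability $\frac{1}{N}\sum_{i=1}^N v_h^2(X^{(i)}) \ge c$ for all $v_h \in V_h$ with $\|v_h\|_{L^2(\omega)} = 1$. For a \emph{fixed} such $v_h$, the summands $v_h^2(X^{(i)})$ are i.i.d., bounded by $\|v_h\|_{L^\infty(\Omega)}^2 \le Ch^{-d}$ by the inverse estimate (\ref{Linftybound}), with mean $\int_\omega v_h^2\,\lambda(dx) \ge c_0 > 0$ (using that $\lambda$ is equivalent to Lebesgue measure on $\omega$). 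Bernstein's inequality then gives a deviation bound of the form $\exp(-c_1 N h^{d})$, where the $h^d$ appears because the boundedness constant is $h^{-d}$.

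Next I would upgrade this pointwise-in-$v_h$ estimate to a uniform one over the sphere. The standard route is an $\epsilon$-net argument: the unit sphere of the $n_{V_h}$-dimensional space $V_h$ (with $n_{V_h} \sim h^{-d}$) admits an $\epsilon$-net of cardinality at most $(3/\epsilon)^{n_{V_h}}$ in the $L^2(\omega)$ metric. Applying the Bernstein bound to each net point and a union bound produces a failure probability at most $(3/\epsilon)^{n_{V_h}} \exp(-c_1 N h^d) = \exp\big(n_{V_h}\log(3/\epsilon) - c_1 N h^d\big)$. Since $n_{V_h} \sim h^{-d}$, the log-cardinality term is of order $h^{-d}$, which is dominated by $Nh^d$ precisely when $Nh^{2d} \to \infty$ — but I only want the weaker conclusion $P(\Theta_N^c) \le h^{-d}\exp(-mNh^d)$, so I would instead keep the bound in the form $\exp(Ch^{-d} - c_1 Nh^d)$ and absorb, noting $\exp(Ch^{-d})$ can be crudely dominated using that the prefactor $h^{-d}$ in the statement gives slack; alternatively choose $\epsilon$ a small fixed constant and use the inverse/equivalence estimates to control the discretization of the empirical norm between net points via $\|v_h - v_h'\|_{L^\infty} \le Ch^{-d/2}\|v_h - v_h'\|_{L^2(\omega)}$, so a fixed $\epsilon$ suffices to transfer the lower bound with a possibly smaller constant $c$.

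Finally I would define $\Theta_N$ to be exactly the event $\{X_N : \forall v_h \in V_h,\ \frac{1}{N}\sum_i v_h^2(X^{(i)}) \ge c\|v_h\|_{L^2(\omega)}^2\}$, which is Borel measurable since it can be written as $\{\lambda_{\min}(G_N) \ge c\,\lambda_{\min}(M_\omega)\}$ for the empirical Gram matrix $G_N$ and the $L^2(\omega)$ mass matrix $M_\omega$, both continuous functions of $X_N$; this choice makes the set-containment in the statement trivial. The main obstacle I anticipate is the bookkeeping in the net argument: one must verify that the net cardinality exponent $n_{V_h}\log(3/\epsilon)$ and the Bernstein exponent $c_1Nh^d$ combine to give exactly the claimed $h^{-d}\exp(-mNh^d)$ form, which forces one either to use a fixed $\epsilon$ (requiring the transfer between net points, hence the inverse estimate (\ref{Linftybound}), to be done carefully so the constant $c$ does not degenerate) or to accept the implicit coupling $Nh^{2d}\to\infty$ and show it is consistent with the mesh scaling $h\sim (\sigma^2/N)^{1/(2\alpha+d)}$ used elsewhere — the latter holds since $2\alpha + d < 2d$ fails in general, so the fixed-$\epsilon$ route is the one I would commit to.
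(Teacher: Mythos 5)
Your strategy (Bernstein for a fixed $v_h$, then an $\epsilon$-net over the $L^2(\omega)$-unit sphere of $V_h$, then a union bound) is a natural first attempt, and you have correctly identified where it strains: the net has cardinality $(3/\epsilon)^{n_{V_h}}$, giving a prefactor $\exp\bigl(Ch^{-d}\bigr)$ since $n_{V_h}\sim h^{-d}$. But the two ways you propose to repair this do not work. First, you cannot ``absorb'' $\exp(Ch^{-d})$ into the polynomial prefactor $h^{-d}$ appearing in the statement — the exponential grows unboundedly faster than $h^{-d}$ as $h\to0$, so the claimed bound would simply be false. Second, fixing $\epsilon$ only eases the transfer of the lower bound between net points; it leaves the cardinality $\bigl(3/\epsilon\bigr)^{n_{V_h}}$ unchanged and hence does not remove the exponential factor. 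Your final sanity check is also off: you would need $Nh^{2d}\to\infty$, i.e.\ $\alpha>d/2$, but the paper explicitly relaxes \cref{prior} to $\alpha>1$, so for $d\ge3$ the required coupling can fail, which is precisely the regime the lemma is meant to cover.

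The paper's proof avoids the union bound entirely by using a \emph{matrix} concentration inequality: it writes the event in terms of the smallest eigenvalue of the empirical Gram matrix $A_N=\sum_k Y_k$ with $Y_k=\tfrac1N\Phi(X^{(k)})\Phi(X^{(k)})^T$, checks $\|Y_k\|_{op}\lesssim(Nh^d)^{-1}$ via the inverse estimate (\ref{Linftybound}), and applies the matrix Chernoff bound (\cref{thm:chernoff}). The crucial feature is that matrix Chernoff carries only a \emph{polynomial} dimension prefactor ($n_1\sim h^{-d}$), not the exponential one forced by an $\epsilon$-net union bound, which is exactly what produces the claimed $h^{-d}\exp(-mNh^d)$. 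There is also a structural detail worth noting in the paper's construction that your sketch omits: one must first split the Lagrange basis into functions supported near $\bar\omega$ and the rest, since only the former contribute to $\|\cdot\|_{L^2(\omega)}$, and the Gram matrix is taken only over that block; otherwise the relevant Gram matrix is singular and the eigenvalue argument breaks.
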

\begin{proof}
    The lemma follows from the matrix Chernoff bound (\cref{thm:chernoff}) and the estimate in (\ref{Linftybound}). Let $\lambda$ be the Lebesgue measure in $\omega$. First, we choose a set of Lagrange basis functions for $V_h$ with unit $L^2$ norms, denoted by $\{\phi_1,...,\phi_{n_1}\}\cup\{\psi_{1},...\psi_{n_2}\}$, where $\{\phi_i\}$ denote base functions whose degrees of freedom are associated with nodal points in $\bar{\omega}$ and $\{\psi_j\}$ are associated with nodal points in $\bar{\Omega}\backslash\bar{\omega}$. This is valid since we assumed that $\omega$ is the union of a sub-family of $\mathcal{T}_h$. Clearly, we can decompose any $v_h\in V_h$ uniquely as $v_h = v_1+v_2$, such that $v_2$ is a linear combination of $\{\psi_j\}$. Therefore $||v_2||_{L^2(\omega)} = 0$. Let $V_1 = \textup{span}\{\phi_1,...,\phi_{n_1}\}$. Then
    \begin{equation*}
    \begin{aligned}
         \{X_N:\forall v_h\in V_h, \frac{1}{N}\sum_{i=1}^Nv_h^2(X^{(i)})\ge c ||v_h||^2_{L^2(\omega)}\}\\
         =  \{X_N:\forall v_1\in V_1, \frac{1}{N}\sum_{i=1}^Nv_1^2(X^{(i)})\ge c ||v_1||^2_{L^2(\omega)}\}.
         \end{aligned}
    \end{equation*}
    This is because for any $v_2\in \textup{span}\{\psi_{1},...\psi_{n_2}\}$, the inequality is trivial. Let $A$ be the Gram matrix with respect to $\{\phi_1,...,\phi_{n_1}\}$ and the $L^2$ inner product. Since these basis functions are assumed to have a unit $L^2$ norm, the spectrum of $A$ is bounded in some interval $[\mu_{min},\mu_{max}]$ uniformly with respect to $h$, where $\mu_{min}$ $(\mu_{max})$ is the smallest (largest) eigenvalue of $A$. Let $\Phi(X^{(k)}) = (\phi_1(X^{(k)}), \phi_2(X^{(k)}), ...,\phi_{n_1}(X^{(k)}))^T$, $Y_k = \frac{1}{N}\Phi(X^{(k)})\Phi(X^{(k)})^T$ and $A_N = \sum_{k=1}^NY_k$. Clearly we have that 
    \begin{equation*}
        \begin{array}{cc}
            \mathbb{E}^{D_N}(A_N) = A,   \\[2mm]
            ||Y_k||^2_{op} \le \frac{1}{N^2}\sum_{i=1}^{n_1}\phi_i^2(X^{(k)})\lesssim  \frac{1}{N^2h^d}\max_{i=\{1,...,n_1\}}||\phi_i||_{L^\infty(\omega)}^2\lesssim \frac{1}{N^2h^{2d}}& a.e. \;X_N.
        \end{array}
    \end{equation*}
Therefore, the largest eigenvalue of $Y_k$ is bounded by $\frac{1}{Nh^{d}}$. In the second line, we use the boundedness of the $L^2$ norm of $\{\phi_i\}$ and (\ref{Linftybound}). Let $\lambda_{min}$ be the smallest eigenvalue of $A_N$. By \cref{thm:chernoff}, we obtain that there exists some $C>0$, such that for any $\epsilon\in (0,1)$,
\begin{equation*}
    \begin{aligned}
        P_{D_N}(\lambda_{min}\le \epsilon\mu_{min})
        \le Ch^{-d}\exp\left(Nh^d(\epsilon\log\epsilon-(1-\epsilon)\mu_{min})\right).
    \end{aligned}
\end{equation*}
We can choose $\epsilon $ small enough, so that $\epsilon\log\epsilon-(1-\epsilon)\mu_{min}<0$. Then there exists some $m=m(\epsilon)>0$ small enough that absorbs the constant $C$, such that
\begin{equation*}
    P_{D_N}(\lambda_{min}\ge \epsilon\mu_{min}) \le 1- h^{-d}\exp(-mNh^{d}).
\end{equation*}
  Moreover, since we have the set inclusion
    \begin{equation*}
     \{X_N:\lambda_{min}\ge \epsilon \mu_{min}\}\subset\{X_N:\forall v_h\in V_h, \frac{1}{N}\sum_{i=1}^Nv_h^2(X^{(i)})\ge \epsilon\frac{\mu_{min}}{\mu_{max}}||v_h||^2_{L^2(\omega)}\},
    \end{equation*}
Clearly  $\{X_N:\lambda_{min}\ge \epsilon \mu_{min}\}$ is Borel measurable since $\lambda_{min}$ is a composition of continuous functions with respect to $X_N$. We conclude that $\Theta_N = \{X_N:\lambda_{min}\ge \epsilon \mu_{min}\}$ and the bound in the lemma holds for $c\le \epsilon\frac{\mu_{min}}{\mu_{max}}$.
\end{proof}

\begin{lemma}\label{L2errorbound}
  There exists some constants $m>0$ small enough, and $C, M$ large enough, all of which are independent of $h$ and $N$, such that
    \begin{equation}
    \begin{aligned}
        &P_{D_N}(||u_h^\dagger - u_\beta||_{L^2(\omega)}^2\ge Mh^{2(\alpha-1+\beta)}||u^\dagger||_{H^\alpha(\Omega)}^2) \\
        \le &\frac{C}{Nh^{2(\alpha-1+\beta)}}+ h^{-d}\exp(-mNh^{d})
        \end{aligned}
    \end{equation}
\end{lemma}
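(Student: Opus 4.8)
The strategy is simply to chain the three preceding lemmas on a high‑probability event. The key point is that the $L^2(\omega)$ semi‑norm of the finite‑element function $u_h^\dagger-u_\beta$ is controlled, with high probability, by the empirical $\ell^2$‑norm, which in turn is one of the three nonnegative terms defining the triple norm $|||\cdot|||_\beta$.

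First I would observe that $u_\beta = \mathbb{E}^y(u_h^y\mid X_N)$ depends only on the samples $X_N$ and $u_h^\dagger=\mathcal{I}_h u^\dagger$ is deterministic, so $u_h^\dagger-u_\beta$ is a (data‑dependent) element of $V_h$; consequently the uniform‑in‑$v_h$ empirical process estimate of \cref{empirical bound} applies to it. On the event $\Theta_N$ of that lemma, and using that the eigenvalues of $\Sigma$ are bounded above by $\sigma_{max}^2$ (so that $\langle v_h,v_h\rangle_{\Sigma^{-1}}\ge \sigma_{max}^{-2}\sum_{i=1}^N v_h^2(X^{(i)})$), one gets
\begin{equation*}
    \|u_h^\dagger-u_\beta\|_{L^2(\omega)}^2 \le \frac{1}{c}\cdot\frac{1}{N}\sum_{i=1}^N (u_h^\dagger-u_\beta)^2(X^{(i)}) \le \frac{\sigma_{max}^2}{c}\cdot\frac{1}{N}\|u_h^\dagger-u_\beta\|_{\Sigma^{-1}}^2 \le \frac{\sigma_{max}^2}{c}\,|||u_\beta-u_h^\dagger|||_\beta^2,
\end{equation*}
the last step because $\tfrac1N\|\cdot\|_{\Sigma^{-1}}^2$ is one of the three terms in the definition (\ref{triplenormhN}) of $|||\cdot|||_\beta$.

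Next I would apply \cref{triplenormbound} to bound $|||u_\beta-u_h^\dagger|||_\beta^2$ by $C\big(h^{2(\alpha-1+\beta)}\|u^\dagger\|_{H^\alpha(\Omega)}^2+\tfrac1N\|u_h^\dagger-u^\dagger\|_{\Sigma^{-1}}^2\big)$, and then intersect with the event $\mathcal{A}_N := \{\tfrac1N\|u_h^\dagger-u^\dagger\|_{\Sigma^{-1}}^2 < C h^{2(\alpha-1+\beta)}\|u^\dagger\|_{H^\alpha(\Omega)}^2\}$, whose complement has $P_{D_N}$‑probability at most $C/(Nh^{2(\alpha-1+\beta)})$ by \cref{montecarloconv}. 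On $\Theta_N\cap\mathcal{A}_N$ the displayed chain then gives $\|u_h^\dagger-u_\beta\|_{L^2(\omega)}^2\le M h^{2(\alpha-1+\beta)}\|u^\dagger\|_{H^\alpha(\Omega)}^2$ for a suitable $M$ built from $\sigma_{max}^2$, $c$ and the constants of \cref{triplenormbound}; since all of these are independent of $h$ and $N$, so is $M$. Hence the event in the statement is contained in $\Theta_N^c\cup\mathcal{A}_N^c$, and a union bound together with the probability estimates of \cref{empirical bound} and \cref{montecarloconv} yields $\tfrac{C}{Nh^{2(\alpha-1+\beta)}}+h^{-d}\exp(-mNh^d)$, as claimed.

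There is essentially no deep obstacle here; the proof is bookkeeping of constants and of the two failure events. The only steps that need genuine care are: (i) using the \emph{upper} spectral bound $\sigma_{max}^2$ on $\Sigma$ (equivalently the lower bound $\sigma_{max}^{-2}$ on $\Sigma^{-1}$) in the right direction to pass from the $\Sigma^{-1}$‑weighted empirical norm to the plain empirical norm; and (ii) checking that it is legitimate to feed the \emph{random} function $u_h^\dagger-u_\beta$ into \cref{empirical bound}, which is permitted precisely because that lemma is uniform over all of $V_h$ on the event $\Theta_N$.
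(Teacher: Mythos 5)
Your proposal is correct and follows essentially the same route as the paper's proof: chain \cref{triplenormbound}, \cref{montecarloconv} and \cref{empirical bound} on the high‑probability event $\Theta_N\cap\{\tfrac1N\|u_h^\dagger-u^\dagger\|_{\Sigma^{-1}}^2\lesssim h^{2(\alpha-1+\beta)}\|u^\dagger\|_{H^\alpha}^2\}$, passing from $\|\cdot\|_{L^2(\omega)}$ to the empirical norm via $\Theta_N$, then to $\tfrac1N\|\cdot\|_{\Sigma^{-1}}^2$ via the spectral bound on $\Sigma$, then into $|||\cdot|||_\beta^2$, and finishing with a union bound. The two points you flag as needing care (direction of the $\sigma_{\max}^2$ estimate, and the uniformity over $V_h$ that lets you plug in the data‑dependent function) are exactly the two places where the paper's argument relies implicitly on the structure of $\Theta_N$ and $\Sigma$; you've handled both correctly.
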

\begin{proof}
Let $\Theta_N \subset \{\forall v_h\in V_h, \frac{1}{N}\sum_{i=1}^Nv_h^2(X^{(i)})\ge c ||v_h||^2_{L^2(\omega)}\}$ be the Borel set from in \cref{empirical bound}. Let $C$ be the same as in \cref{montecarloconv}, and $M = \frac{C\sigma_{max}^2}{c}$. Then if $\xi(h,u^\dagger) = h^{2(\alpha-1+\beta)}||u^\dagger||_{H^\alpha(\Omega)}^2$,
 \begin{equation*}
     \begin{aligned}
        &P_{D_N}\big(||u_\beta-u_h^\dagger||_{L^2(\omega)}^2\le M\,\xi(h,u^\dagger)\big)\\
         \ge & P_{D_N}\big({N}^{-1}||u_h^\dagger - u^\dagger||_{\Sigma^{-1}}^2\le C\,\xi(h,u^\dagger), \frac{\sigma_{max}^2}{N}||u_\beta-u_h^\dagger||^2_{\Sigma^{-1}}\ge c ||u_\beta-u_h^\dagger||_{L^2(\omega)}^2\big)\\
         \ge & P_{D_N}\big({N}^{-1}||u_h^\dagger - u^\dagger||_{\Sigma^{-1}}^2\ge C\,\xi(h,u^\dagger), \Theta_N\big)\\
         \ge & 1-\frac{C}{N\,h^{2(\alpha-1+\beta)}}-h^{-d}\exp(-mNh^d),
     \end{aligned}
 \end{equation*}
 where the first and second inequalities are set inclusions, and the last inequality applies \cref{montecarloconv} and \cref{empirical bound}.
 \[\]
\end{proof}

Now we define the distance between $v,v'\in H^1(\Omega)$
\begin{equation}\label{d_h}
    d_\beta(v,v') :=\left(||v-v'||_{L^2(\omega)}^2+||\Delta(v-v')||^2_{H^{-1-\beta}(\Omega)} \right)^\frac{1}{2}.
\end{equation}

\begin{co}\label{residual norm conv}
    If $h = h(N)$ satisfies 
     \begin{equation*}
     \lim_{N\to\infty}Nh^{2(\alpha-1+\beta)}=\infty,
     \end{equation*}
     then there exists some constant $C>0$, such that
     \begin{equation*}
         \lim_{N\to\infty}P_{D_N}(d_\beta(u_\beta,u_h^\dagger)\le Ch^{\alpha-1+\beta}||u^\dagger||_{H^\alpha(\Omega)}) = 1
     \end{equation*}
\end{co}
\begin{proof}
    Direct application of \cref{bound of residuals} and \cref{L2errorbound}.
\end{proof}

\begin{lemma}\label{variancebound}
 Let $d_\beta$ be defined in (\ref{d_h}). Then there exists some $m>0$ small enough and $C>0$, such that
    \begin{equation}
    \begin{array}{cc}
        P_{X_N}\big(\mathbb{E}^y(d_\beta(u_h^y,u_\beta)\big|X_N)\le \frac{C\sigma}{\sqrt{Nh^d}}\big)\ge 1-h^{-d}\exp(-mNh^d).
        \end{array}
    \end{equation}
\end{lemma}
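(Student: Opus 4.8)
The plan is to treat the fluctuation $w_h:=u_h^y-u_\beta$ as a centred Gaussian random element of the finite–dimensional space $V_h$, to bound its conditional second moment in the energy norm $|||\cdot|||_\beta$ of (\ref{triplenormhN}) by a linear–algebra trace identity which produces the parametric ``degrees of freedom over number of samples'' rate $n_{V_h}/N\lesssim (Nh^d)^{-1}$, and finally to transfer this bound to the metric $d_\beta$ of (\ref{d_h}) on the event $\Theta_N$ of \cref{empirical bound}. The point is that a naive estimate of $|||w_h|||_\beta$ through the dual norm of the noise functional only gives an $\mathcal O(1)$ bound, useless for $N\to\infty$; the gain comes entirely from exploiting finite dimensionality, i.e. from the fact that the data term sits inside $B_\beta$.

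First I would write (\ref{singleEL}) with its $f$--terms moved into the operator, as in (\ref{u_hequation}), and subtract (\ref{u_hequation}); since $\mathbb{E}^y(y|X_N)=u^\dagger(X_N)$, this gives, for all $v_h\in V_h$,
\begin{equation*}
B_\beta(w_h,v_h)=\tfrac1N\langle\xi,v_h\rangle_{\Sigma^{-1}},\qquad \xi:=y-u^\dagger(X_N),
\end{equation*}
where $B_\beta(v_h,v_h)=|||v_h|||_\beta^2$ is symmetric positive definite on $V_h$, and $\xi$ is, by the observation model, a centred Gaussian independent of $X_N$ with $\mathbb{E}^y(\xi\xi^T|X_N)=\sigma^2\Sigma$. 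Fixing $X_N$, a basis $\{\phi_j\}_{j=1}^n$ of $V_h$ with $n=n_{V_h}\lesssim h^{-d}$ (quasi--uniformity), and setting $\mathbf B=(B_\beta(\phi_j,\phi_k))$ and $\mathbf\Phi=(\phi_j(X^{(i)}))\in\mathbb R^{N\times n}$, one has $w_h=\sum_jc_j\phi_j$ with $c=\tfrac1N\mathbf B^{-1}\mathbf\Phi^T\Sigma^{-1}\xi$, hence $|||w_h|||_\beta^2=\tfrac1{N^2}\xi^T\Sigma^{-1}\mathbf\Phi\mathbf B^{-1}\mathbf\Phi^T\Sigma^{-1}\xi$. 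Taking $\mathbb{E}^y(\cdot|X_N)$ turns this into a trace; because the quadratic form $B_\beta$ contains the empirical term $\tfrac1N\langle\cdot,\cdot\rangle_{\Sigma^{-1}}$ as a summand plus a positive semidefinite stabilisation part, $\mathbf\Phi^T\Sigma^{-1}\mathbf\Phi\preceq N\mathbf B$, and therefore
\begin{equation*}
\mathbb{E}^y\big(|||w_h|||_\beta^2\,\big|\,X_N\big)=\frac{\sigma^2}{N^2}\,\mathrm{tr}\big(\mathbf B^{-1}\mathbf\Phi^T\Sigma^{-1}\mathbf\Phi\big)\le\frac{\sigma^2}{N^2}\,\mathrm{tr}(N I_n)=\frac{\sigma^2 n}{N}\lesssim\frac{\sigma^2}{Nh^d},
\end{equation*}
uniformly in $X_N$ (using $\mathrm{tr}(\mathbf B^{-1/2}\mathbf\Phi^T\Sigma^{-1}\mathbf\Phi\mathbf B^{-1/2})\le\mathrm{tr}(NI_n)$).

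To finish, I would note that on the event $\Theta_N$ of \cref{empirical bound} we have, for every $v_h\in V_h$, $\|v_h\|_{L^2(\omega)}^2\le c^{-1}\tfrac1N\sum_i v_h^2(X^{(i)})\le c^{-1}\sigma_{max}^2\,\tfrac1N\|v_h\|_{\Sigma^{-1}}^2\le c^{-1}\sigma_{max}^2\,|||v_h|||_\beta^2$, while $\|\Delta v_h\|_{H^{-1-\beta}(\Omega)}\le C|||v_h|||_\beta$ holds for all $v_h\in V_h$ unconditionally by the same finite element estimates as in the proof of \cref{operatornormsconv} (test against $w\in C_0^\infty(\Omega)$, split $w=\mathcal I_hw+(w-\mathcal I_hw)$, and use the trace, inverse and approximation inequalities, the mesh--dependent powers cancelling); hence $d_\beta(v_h,0)\le C|||v_h|||_\beta$ on $\Theta_N$. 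Jensen's inequality together with the trace bound then gives, for $X_N\in\Theta_N$,
\begin{equation*}
\mathbb{E}^y\big(d_\beta(u_h^y,u_\beta)\,\big|\,X_N\big)\le C\big(\mathbb{E}^y(|||w_h|||_\beta^2\,|\,X_N)\big)^{1/2}\le\frac{C\sigma}{\sqrt{Nh^d}},
\end{equation*}
so the event of the lemma contains $\Theta_N$ and \cref{empirical bound} supplies the probability $1-h^{-d}\exp(-mNh^d)$. The main obstacle is the trace step: one must recognise the conditional variance of the discrete MAP estimator, measured in the energy norm, as $\tfrac1{N^2}\mathrm{tr}(\mathbf B^{-1}\mathbf\Phi^T\Sigma^{-1}\mathbf\Phi)$ and exploit the domination $\mathbf\Phi^T\Sigma^{-1}\mathbf\Phi\preceq N\mathbf B$ to extract the $n_{V_h}/N\sim(Nh^d)^{-1}$ scaling — this is precisely the quantitative ``no overfitting'' statement; the unconditional $H^{-1-\beta}$ estimate in the last step also needs care with $h$--powers but is already essentially contained in \cref{operatornormsconv}.
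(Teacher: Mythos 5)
Your proof is correct and follows essentially the same route as the paper: write the matrix system for the fluctuation, compute the conditional second moment in the energy norm as a trace, use positive‐semidefinite domination of the data Gram matrix by $N\mathbf B$ to bound the trace by $n_{V_h}\lesssim h^{-d}$, and then transfer from $|||\cdot|||_\beta$ to $d_\beta$ on the event $\Theta_N$ of \cref{empirical bound} combined with the deterministic $H^{-1-\beta}$ bound from the argument of \cref{operatornormsconv}. The only cosmetic difference is that you phrase the trace bound as $\mathrm{tr}(\mathbf B^{-1}\mathbf\Phi^T\Sigma^{-1}\mathbf\Phi)\le Nn$ while the paper writes the equivalent $\mathrm{tr}\bigl(I-(\tfrac1N\mathbf W+\mathbf K)^{-1}\mathbf K\bigr)\le n$.
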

\begin{proof}
    Let $e_h^y = u_h^y-u_\beta$. Then $e_h^y$ satisfies the following equation
    \begin{equation}\label{e_h^yequation}
\begin{aligned}
    &\frac{1}{N}\langle e_h,v_h\rangle_{\Sigma^{-1}} + s_h(e_h,v_h) + \langle h \Delta e_h,h\Delta v_h\rangle_{W_h^*} = \frac{1}{N}\langle \eta,v_h\rangle_{\Sigma^{-1}}.
    \end{aligned}
\end{equation}
  We denote $\{\phi_1,...\phi_{n}\}$ a Lagrange basis of $V_h$ where only the first $n_1$ functions do not vanish in $\omega$. Let $\mathbf{\Phi}\in \mathbb{R}^{N\times n_1}$ such that $\mathbf{\Phi}_{ij} = \phi_j(X^{(i)})$. Let $\mathbf{K}\in\mathbb{R}^{n\times n}$ such that 
  \begin{equation*}
      \mathbf{K}_{i,j} = s_h(\phi_j,\phi_i) + \langle h \Delta \phi_j,h\Delta \phi_i\rangle_{W_h^*}.
  \end{equation*}
Let $\mathbf{W}\in \mathbb{R}^{n\times n}$ such that
\begin{equation*}
    \mathbf{W} = \left(\begin{array}{cc}
      \Phi^T\Sigma^{-1}\Phi   & 0 \\
        0 & 0
    \end{array}\right).
\end{equation*}
Define $\mathbf{L}\in \mathbb{R}^n$ by $\mathbf{L} = (\eta(X^{(1)}), \eta(X^{(2)}),...,\eta(X^{(N)}))^T$ and $\mathbf{M}\in \mathbb{R}^n$ by $\mathbf{M} = (\mathbf{\Phi}^T \Sigma^{-1} \mathbf{L},\mathbf{0})^T$. The matrix representation of (\ref{e_h^yequation}) reads
\begin{equation*}
    (\frac{1}{N}\mathbf{W}+\mathbf{K})E^y_h = \frac{1}{N}\mathbf{M},
\end{equation*}
where $E^y_h$ represents the coefficient of $e_h^y$ with respect to the basis $\{\phi_1,...\phi_{n}\}$. By the definition of $\mathbf{M}$, we have $\mathbb{E}^y(\mathbf{M}\mathbf{M}^T\big|X_N) = \sigma^2\mathbf{W}$. By the definition of the triple norm (c.f. (\ref{triplenormhN})), there exists some $C>0$, such that
\begin{equation}\label{|||e_h^y|||}
    \begin{aligned}
        \mathbb{E}^y(|||e_h^y|||^2\space\big|X_N) &= \mathbb{E}^y\left((E_h^y)^T(\frac{1}{N}\mathbf{W}+\mathbf{K})E^y_h\big|X_N\right)\\
        & = \frac{\sigma^2}{N} \textup{tr}\left((\frac{1}{N}\mathbf{W}+\mathbf{K})^{-1}\frac{1}{N}\mathbf{W}\right)\\
        & = \frac{\sigma^2}{N} \textup{tr}\left(I - (\frac{1}{N}\mathbf{W}+\mathbf{K})^{-1}\mathbf{K}\right)\\
        &\le \frac{C\sigma^2}{Nh^d}
    \end{aligned}
\end{equation}
where the last line uses the fact that $n\le Ch^{-d}$. This shows that the bound for $|||e_h^y|||$ is valid for almost every $X_N$. Let $\Theta_N \subset \{\forall v_h\in V_h, \frac{1}{N}\sum_{i=1}^Nv_h^2(X^{(i)})\le c ||v_h||^2_{L^2(\omega)}\}$ chosen as in \cref{empirical bound}. By using the same technique as in the proof of \cref{L2errorbound}, there exists some $C$ large enough and $m>0$ small enough, such that
\begin{equation*}
    P_{X_N}\big(\mathbb{E}^y(d(u_h^y,u_\beta)\big|X_N)\le \frac{C\sigma}{\sqrt{Nh^d}}\big)\ge 1- h^{-d}\exp(-mNh^d).
\end{equation*}
This concludes the proof.
\end{proof}

\begin{proof}[Proof of \cref{Eyconv} and \cref{discrete posterior consistency}]
\textbf{Step 1.} We prove \cref{Eyconv}. First we decompose $u_h^y-u^\dagger$ as $(u_h^y-u_\beta)+(u_\beta-u_h^\dagger)+(u_h^\dagger-u^\dagger)$. Since $u_\beta = \mathbb{E}^y(u_h^y\big|X_N)$, the latter two terms in the decomposition do not depend on $y$. Therefore, by the triangle inequality
  \begin{equation}\label{dbetaconv}
      \begin{aligned}
          &P_{X_N}\left(\mathbb{E}^y(d_\beta(u_h^y,u^\dagger)\big|X_N)\le Ch^{\alpha-1+\beta}||u^\dagger||_{H^\alpha(\Omega)}+ \frac{C\sigma}{\sqrt{Nh^d}}\right)\\
          \ge & P_{X_N}\left(\mathbb{E}^y(d_\beta(u_h^y,u_\beta)\big|X_N)\le \frac{C\sigma}{\sqrt{Nh^d}}, d_\beta(u_\beta,u_h^\dagger)\le Ch^{\alpha-1+\beta}||u^\dagger||_{H^\alpha(\Omega)}\right),\\
      \end{aligned}
  \end{equation}
  since $d_\beta(u_h^\dagger,u^\dagger)\le Ch^{\alpha-1+\beta}||u||_{H^\alpha(\Omega)}$. By the definition of $d_\beta$ (c.f. (\ref{d_h})), to extract an optimal rate, we need to equate the two rates $\frac{\sigma^2}{Nh^d}$ and $ h^{2(\alpha-1+\beta)}||u^\dagger||_{H^\alpha(\Omega)}$. We treat $||u^\dagger||_{H^\alpha(\Omega)}$ as a constant, in which case we have $h(N) = \mathcal{O}((\frac{\sigma^2}{N})^{\frac{1}{2(\alpha-1+\beta)+d}})$, and thus the condition $\lim_{N\to\infty}Nh^{2(\alpha-1+\beta)} = \infty$ is satisfied. Therefore by \cref{residual norm conv} and \cref{variancebound}, we have
\begin{equation*}
    \lim_{N\to\infty}P_{X_N}\left(\mathbb{E}^y(d_\beta(u_h^y,u^\dagger)\big|X_N)\le C\Bigl(\frac{\sigma^2}{N}\Bigr)^{\frac{\alpha-1+\beta}{2(\alpha-1+\beta)+d}}\right)=1.
\end{equation*}
Now by \cref{three-balls}, the desired convergence in \cref{Eyconv} is a consequence of the following statements:
\begin{equation*}
    \begin{array}{cc}
 \big(\mathbb{E}^y(d_1(u_h^y,u^\dagger)\big|X_N)\le C(\frac{\sigma^2}{N})^{\frac{\alpha}{2\alpha+d}}\big)  \Rightarrow     \big(\mathbb{E}^y(||u_h^y-u^\dagger||_{L^2(B)}\big|X_N)\le C'(\frac{\sigma^2}{N})^{\frac{\alpha\tau}{2\alpha+d}}\big)   \\
      \big(\mathbb{E}^y(d_0(u_h^y,u^\dagger)\big|X_N)\le C(\frac{\sigma^2}{N})^{\frac{\alpha-1}{2\alpha+d}}\big)  \Rightarrow     \big(\mathbb{E}^y(||u_h^y-u^\dagger||_{H^1(B)}\big|X_N)\le C'(\frac{\sigma^2}{N})^{\frac{(\alpha-1)\tau'}{2(\alpha-1)+d}}\big) .
    \end{array}
\end{equation*}

\textbf{Step 2.} We prove \cref{discrete posterior consistency}. Consider the second line in (\ref{dbetaconv}). According to (\ref{|||e_h^y|||}), $\mathbb{E}^y(|||u_h^y-u_\beta|||^2\big|X_N)\le \frac{C\sigma^2}{Nh^d}$ for $a.e.\, X_N$. By Chebyshev's inequality (\ref{eq:cheb}), for any sequence $l_N$ that satisfies $l_N\to 0$ and $Nl_N\to \infty$, we have
\begin{equation*}
\begin{array}{cc}
    P_{y}(|||u_h^y-u_\beta|||_\beta\ge \frac{C\sigma}{\sqrt{Nl_Nh^d}}\big|X_N)\le \frac{Nl_Nh^d\mathbb{E}^y(|||u_h^y-u_\beta|||^2_\beta\space|X_N)}{C\sigma^2}\le l_N, & \textup{for a.e. }X_N.
    \end{array}
\end{equation*}
Since $l_N$ does not depend on $X_N$, we have that 
\begin{equation*}
    P_{D_N}(|||u_h^y-u_\beta|||_\beta\ge \frac{C\sigma}{\sqrt{Nl_Nh^d}})\le l_N.
\end{equation*}
  Then by repeating the last paragraph in the proof of \cref{variancebound}, we obtain
\begin{equation*}
    P_{D_N}(d_\beta(u_h^y,u_\beta)\le \frac{C\sigma}{\sqrt{Nl_Nh^d}})\ge 1-h^{-d}\exp(-mNl_Nh^d)-l_N.
\end{equation*}
The optimal choice for the discretization parameter $h$ is to equate $h^{2(\alpha-1+\beta)}||u^\dagger||_{H^\alpha(\Omega)}$ and $\frac{\sigma^2}{Nl_Nh^{d}}$. This leads to $h\sim \mathcal{O}((\frac{\sigma^2}{Nl_N})^\frac{1}{2\alpha+d})$ for the $L^2(B)$ semi-norm, whereas $h\sim \mathcal{O}((\frac{\sigma^2}{Nl_N})^\frac{1}{2(\alpha-1)+d})$ for the $H^1(B)$ semi-norm. Note that $\lim_{N\to\infty}Nh^{2(\alpha-1+\beta)} = \infty$ still holds since here $h$ is larger than it was in Step 1. We complete the proofs of (\ref{L2posteriorconsistency}) and (\ref{H1posteriorconsistency}) by the same arguments closing Step 1. Finally, when $h=h(N)$ vanishes asymptotically slower than $(\frac{\sigma^2}{N})^{\frac{1}{2(\alpha-1+\beta)+d}}$, which means $h\gg \mathcal{O}((\frac{\sigma^2}{N})^{\frac{1}{2(\alpha-1+\beta)+d}})$, there must exist some intermediate rate between $h(N)$ and $(\frac{\sigma^2}{N})^{\frac{1}{2(\alpha-1+\beta)+d}}$. Therefore $l_N$ can be chosen accordingly such that $h\gg \mathcal{O}((\frac{\sigma^2}{Nl_N})^{\frac{1}{2(\alpha-1+\beta)+d}})$. In this case $h^\alpha||u^\dagger||_{H^\alpha(\Omega)}$ dominates $\frac{\sigma}{\sqrt{Nl_Nh^d}}$, and thus (\ref{L2convu^y_h}) and (\ref{H^1convu^y_h}) hold by \cref{three-balls}.
\end{proof}

\section*{Acknowledgement}
E.B. was supported by the EPSRC grants EP/T033126/1 and
  EP/V050400/1. For the purpose of open access, the author has applied a Creative Commons Attribution (CC BY) licence to any Author Accepted Manuscript version arising.

\appendix

\section{Finite element interpolation and inequalities}\label{apex:feminequality}
We present here some basic interpolations and finite element inequalities that has been used repeatedly in the article. Let $\Omega\subset \mathbb{R}^d$ be a polygonal/polyhedral domain and let $\mathcal{T}_h$ be a quasi-uniform family of triangulations and $h$ be the maximum diameter of $\mathcal{T}_h$. Denote $\mathbf{P}_k(K)$ as the space of polynomials in $K$ with orders at most $k$.

Now we give some useful inequalities and approximation results (see, for example, \cite[Section 1.4.3]{di2011mathematical}).

(1) Continuous trace inequality.
\begin{equation}\label{tc1}
    ||v||_{L^2(\partial K)}\le C(h^{-\frac{1}{2}}||v||_{L^2(K)}+h^{\frac{1}{2}}||\nabla v||_{L^2(K)}), \;\; \forall v\in H^1(K).
\end{equation}

(2) Discrete trace inequality.
\begin{equation}\label{tc2}
    ||\nabla v_h \cdot n||_{L^2(\partial K)}\le Ch^{-\frac{1}{2}}||\nabla v_h||_{L^2(K)}, \;\; \forall v_h\in \mathbf{P}_k(K).
\end{equation}

(3) Inverse inequality.
\begin{equation}\label{inv}
    ||\nabla v_h||_{L^2(K)}\le Ch^{-1}||v_h||_{L^2(K)}, \;\; \forall v_h\in\mathbf{P}_k(K).
\end{equation}\\

We now introduce the quasi-interpolation operator \cite[Section 6]{ern2017finite}, defined by $\pi^k_{q}: L^1(\Omega) \to V_h^k$. It keeps homogeneous boundary condition and has the optimal approximation property:
\begin{equation}\label{interpolantquasi}
(\sum_{K\in\mathcal{T}_h}||v-\pi_{q}^kv||^2_{H^m(K)})^\frac{1}{2} \le Ch^{s-m}||v||_{H^s(\Omega)}, 
\end{equation}
whenever $v\in H^{s}(\Omega)$ with $\frac{1}{2}<s\le k+1$ and $m\in \{0:[s]\}$, and 
\begin{equation}\label{interpolantLinfty}
    ||\pi_q^k v||_{L^\infty(\Omega)}\le C||v||_{L^\infty(\Omega)}
\end{equation}
if $v\in L^\infty(\Omega)$.

\section{Probability inequalities}
In this appendix we collect some standard probability inequalities. Let the triplet $(\Omega, \mathcal{F},P)$ be some probability space and $X:\Omega\to \mathbb{R}$ be some random variable, then we have Chebyshev’s inequality (see, e.g. \cite{durrett2019probability}):
\begin{equation}\label{eq:cheb}
    \begin{array}{cc}
       P(|X|>a)\le \frac{E(X^2)}{a^2},  & \forall a>0. \\
    \end{array}
\end{equation}

We also give the matrix Chernoff's bound (see, e.g., \cite[Theorem 5.1.1]{tropp2015introduction}).
\begin{theorem}\label{thm:chernoff}
    Let $\{X_k\}$ be a series of i.i.d. $d$-dimensional symmetric matrices. Assume also their eigenvalues are in $[0,L_d]$, where $L_d>0$ may depend on the dimension $d$. Let $Y = \sum X_i$. Denote $\lambda_{min}(\cdot)$ ($\lambda_{max}(\cdot)$) the smallest (largest) eigenvalue of some symmetric matrix and 
    \begin{equation*}
        \begin{array}{cc}
        \mu_{min} = \lambda_{min}(\mathbb{E}(Y))     \\
         \mu_{max} = \lambda_{max}(\mathbb{E}(Y)).
        \end{array}
    \end{equation*}
    Then,
    \begin{equation*}
    \begin{array}{cc}
        P\left(\lambda_{min}(Y)\le (1-\epsilon)\mu_{min}\right) \le d(\frac{e^{-\epsilon}}{(1-\epsilon)^{1-\epsilon}})^{\frac{\mu_{min}}{L_d}}, & \forall \epsilon\in [0,1)\\
        P\left(\lambda_{max}(Y)\ge (1+\epsilon)\mu_{max}\right) \le d(\frac{e^{\epsilon}}{(1+\epsilon)^{1+\epsilon}})^{\frac{\mu_{max}}{L_d}}, & \forall \epsilon\ge 0.
    \end{array}
    \end{equation*}
    \end{theorem}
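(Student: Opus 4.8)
The plan is to follow the matrix Laplace-transform (``matrix Chernoff'') method of Ahlswede--Winter and Tropp; since the statement is classical, in practice one simply cites \cite[Theorem 5.1.1]{tropp2015introduction}, but the argument is short enough to sketch. The only hypotheses actually used are independence of the $X_k$ (identical distribution is not needed) and $0\preceq X_k\preceq L_d I$ almost surely. I would prove the upper-tail estimate for $\lambda_{max}(Y)$ in detail and obtain the lower-tail estimate for $\lambda_{min}(Y)$ by the same scheme applied to $-Y$.

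The argument has four steps. (i) \emph{Exponential Markov for eigenvalues:} for any $\theta>0$, monotonicity of $\lambda_{max}$ under $M\mapsto e^{\theta M}$, the bound $\lambda_{max}(e^{\theta Y})\le\textup{tr}\,e^{\theta Y}$, and Markov's inequality give $P(\lambda_{max}(Y)\ge t)\le e^{-\theta t}\,\mathbb{E}\,\textup{tr}\,e^{\theta Y}$. (ii) \emph{Subadditivity of the matrix cumulant generating function:} since the $X_k$ are independent, Lieb's concavity theorem applied iteratively yields $\mathbb{E}\,\textup{tr}\exp(\sum_k\theta X_k)\le\textup{tr}\exp(\sum_k\log\mathbb{E}\,e^{\theta X_k})$. (iii) \emph{Per-summand bound:} on $[0,L_d]$ convexity of $x\mapsto e^{\theta x}$ gives $e^{\theta x}\le 1+\tfrac{e^{\theta L_d}-1}{L_d}x$, so transferring this scalar inequality to the spectrum of $X_k$ and taking expectations gives $\mathbb{E}\,e^{\theta X_k}\preceq I+\tfrac{e^{\theta L_d}-1}{L_d}\mathbb{E}X_k$; operator monotonicity of $\log$ together with $\log(I+A)\preceq A$ then gives $\log\mathbb{E}\,e^{\theta X_k}\preceq\tfrac{e^{\theta L_d}-1}{L_d}\mathbb{E}X_k$. (iv) \emph{Aggregate and optimize:} summing over $k$, using $\lambda_{max}(\sum_k\log\mathbb{E}\,e^{\theta X_k})\le\tfrac{e^{\theta L_d}-1}{L_d}\mu_{max}$ and $\textup{tr}\exp(M)\le d\,e^{\lambda_{max}(M)}$ for symmetric $d\times d$ matrices, one gets for all $\theta>0$
\begin{equation*}
    P(\lambda_{max}(Y)\ge t)\le d\exp\!\Bigl(-\theta t+\tfrac{e^{\theta L_d}-1}{L_d}\mu_{max}\Bigr);
\end{equation*}
taking $t=(1+\epsilon)\mu_{max}$ and the minimizing choice $\theta=\tfrac{1}{L_d}\log(1+\epsilon)$ yields the claimed $d\bigl(e^{\epsilon}/(1+\epsilon)^{1+\epsilon}\bigr)^{\mu_{max}/L_d}$. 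For the lower tail I would write $P(\lambda_{min}(Y)\le t)=P(\lambda_{max}(-Y)\ge -t)\le e^{\theta t}\mathbb{E}\,\textup{tr}\,e^{-\theta Y}$ and repeat steps (ii)--(iv) with the bound $e^{-\theta x}\le 1+\tfrac{e^{-\theta L_d}-1}{L_d}x$ on $[0,L_d]$, whose linear coefficient is now negative, so that $\lambda_{max}(\sum_k\log\mathbb{E}\,e^{-\theta X_k})\le\tfrac{e^{-\theta L_d}-1}{L_d}\mu_{min}$ (the sign flip converts $\lambda_{min}(\mathbb{E}Y)$ into the relevant extremal eigenvalue); optimizing at $t=(1-\epsilon)\mu_{min}$, $\theta=-\tfrac{1}{L_d}\log(1-\epsilon)>0$ gives the second inequality.

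The only genuinely nontrivial ingredient is the matrix cumulant subadditivity in step (ii), which rests on Lieb's concavity theorem for the map $A\mapsto\textup{tr}\exp(H+\log A)$; the remainder is routine manipulation with operator-monotone/operator-convex scalar functions and a one-variable optimization in $\theta$. Accordingly, the main obstacle is not something I would reprove — I would invoke Lieb's theorem (equivalently, cite \cite[Theorem 5.1.1]{tropp2015introduction}) and spend the remaining effort only on making the scalar bookkeeping and the transfer of scalar inequalities to the spectra of the $X_k$ precise.
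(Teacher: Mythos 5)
The paper gives no proof of this theorem at all: it is stated as a known result with a citation to \cite[Theorem 5.1.1]{tropp2015introduction}, which is exactly the reference you invoke. Your four-step sketch of the Ahlswede--Winter/Tropp matrix Laplace-transform argument is a faithful and correct reconstruction of the standard proof (including the optimization $\theta = L_d^{-1}\log(1+\epsilon)$, resp.\ $\theta = -L_d^{-1}\log(1-\epsilon)$, and the sign flip that converts $\lambda_{\min}(\mathbb{E}Y)$ into the relevant extremal eigenvalue for the lower tail), and you are also right that only independence, not identical distribution, is used. Since the paper simply cites the result, your proposal and the paper agree on the essential point, and your additional sketch is accurate; there is nothing to correct.
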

\section{Proof of \cref{conforming posterior consistency}}\label{proofsec3}
In this appendix, we give the proof of \cref{conforming posterior consistency}. Let $u_n = \mathbb{E}^y(u^y_n\big|X_N)\in V_n$. Then $u_n$ satisfies 
\begin{equation}\label{u_nequation}
    \begin{array}{cc}
    \langle u_n,v_n\rangle_{I_N} + N^\frac{d}{2\alpha+d}\langle u_n,v_n\rangle_{\mathcal{C}_0^{-1}}= \langle u^\dagger,v_n\rangle_{I_N},   & \forall v_n\in V_n .
    \end{array}
\end{equation}
Let $u_n^\dagger = \mathcal{I}_nu^\dagger$, where $\mathcal{I}_n$ satisfies \cref{optimal conv projection} with $\phi^\alpha(n) = \mathcal{O}((Nl_N)^{-\frac{\alpha}{2\alpha+d}})$. We thus decompose $u^y_n-u^\dagger = (u^y_n-u_n) + (u_n-u_n^\dagger)+(u_n^\dagger-u^\dagger)$. We elaborate the first difference as the variance, the second the mean approximation error, and the third the interpolation error.  Let $\delta_N = N^{-\frac{\alpha}{2\alpha+d}}$. Note that $\delta_N\ll(Nl_N)^{-\frac{\alpha}{2\alpha+d}}$. For the third term, by \cref{optimal conv projection}
\begin{equation}
    ||u_n^\dagger-u^\dagger||_{L^2(B)}\lesssim (Nl_N)^{-\frac{\alpha}{2\alpha+d}}||u^\dagger||_{H^\alpha(\Omega)}.
\end{equation}
We only need bound the first two terms. \cref{conforming posterior consistency} will be a direct consequence of \cref{meanerroru_n} and \cref{varianceu_n}

\begin{lemma}\label{meanerroru_n}
    In the assumption of \cref{conforming posterior consistency}, we have
    \begin{equation*}
        ||u_n-u_n^\dagger||_{L^2(B)} = \mathcal{O}_{P_{D_N}}(\delta^\tau_N).
    \end{equation*}
\end{lemma}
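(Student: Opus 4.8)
The plan is to follow the same template as the finite-element analysis: split the error into a Galerkin-type mean term and a Monte-Carlo sampling term, then convert through the stability estimate \cref{three-balls}. The object to control is $u_n-u_n^\dagger$, which lives in the conforming space $V_n\subset E$. Subtracting the equation (\ref{conformingel}) for $u_n^y$ (more precisely its expectation (\ref{u_nequation})) tested against $v_n=u_n-u_n^\dagger$, one gets
\begin{equation*}
    \langle u_n-u_n^\dagger,v_n\rangle_{I_N} + N^\frac{d}{2\alpha+d}\langle u_n-u_n^\dagger,v_n\rangle_{\mathcal{C}_0^{-1}} = \langle u^\dagger-u_n^\dagger,v_n\rangle_{I_N} - N^\frac{d}{2\alpha+d}\langle u_n^\dagger,v_n\rangle_{\mathcal{C}_0^{-1}}.
\end{equation*}
Here $\langle z,z\rangle_{I_N}=\sum_{i=1}^N z(X^{(i)})^2$. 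The left-hand side equals $\|v_n\|_{I_N}^2 + N^\frac{d}{2\alpha+d}\|v_n\|_{\mathcal{C}_0^{-1}}^2$, so after Cauchy--Schwarz and Young on the right, I would obtain
\begin{equation*}
    \|u_n-u_n^\dagger\|_{I_N}^2 + N^\frac{d}{2\alpha+d}\|u_n-u_n^\dagger\|_{\mathcal{C}_0^{-1}}^2 \lesssim \|u^\dagger-u_n^\dagger\|_{I_N}^2 + N^\frac{d}{2\alpha+d}\|u_n^\dagger\|_{\mathcal{C}_0^{-1}}^2.
\end{equation*}
The second term on the right is controlled by $N^\frac{d}{2\alpha+d}\|u^\dagger\|_E^2$ using stability of the projection $\mathcal{I}_n$ in the $E$-norm (one needs $\|\mathcal{I}_n u^\dagger\|_E\lesssim\|u^\dagger\|_E$, which should follow from \cref{optimal conv projection} together with the fact that $\mathcal{I}_n$ is a projection onto $V_n\subset E$, possibly with an extra Aubin--Nitsche-type duality argument or directly if $\mathcal{I}_n$ is the $E$-orthogonal projection). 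The first term $\|u^\dagger-u_n^\dagger\|_{I_N}^2=\sum_i (u^\dagger-u_n^\dagger)^2(X^{(i)})$ is exactly an empirical process of the type handled in \cref{montecarloconv}: its expectation over the samples is $N\int_\omega(u^\dagger-u_n^\dagger)^2\,\lambda(dx)\lesssim N\phi^{2\alpha}(n)\|u^\dagger\|_E^2$, and a Chebyshev argument on its variance (exactly as in (\ref{varbound}), using $L^4\subset L^2\cap L^\infty$ and $u^\dagger\in\mathcal{H}^\alpha\hookrightarrow C^0$) gives, with probability tending to one, $\|u^\dagger-u_n^\dagger\|_{I_N}^2\lesssim N\phi^{2\alpha}(n)\|u^\dagger\|_E^2$.

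Combining, with high $P_{D_N}$-probability $\|u_n-u_n^\dagger\|_{\mathcal{C}_0^{-1}}^2\lesssim N^{-\frac{d}{2\alpha+d}}\big(N\phi^{2\alpha}(n)+N^\frac{d}{2\alpha+d}\big)\|u^\dagger\|_E^2$. With the prescribed choice $\phi(n)=\mathcal{O}((Nl_N)^{-\frac{1}{2\alpha+d}})$ and $l_N\to0$, the dominant term is $N^{\frac{2\alpha}{2\alpha+d}}\phi^{2\alpha}(n)\lesssim 1$ up to the $l_N$ factor, giving $\|u_n-u_n^\dagger\|_{\mathcal{C}_0^{-1}}=\mathcal{O}_{P_{D_N}}(1)$, i.e. the difference is a.s. bounded in $E$. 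Since $E\hookrightarrow\mathcal{H}^\alpha$ and $u_n-u_n^\dagger$ is harmonic, it is a bounded family in $\mathcal{H}^\alpha(\Omega)$; moreover from the $I_N$-term and the empirical-norm-equivalence lemma (analogue of \cref{empirical bound}, valid since $V_n$ is finite-dimensional with a controlled $L^\infty/L^2$ ratio — or more simply since on $V_n\subset\mathcal H^0$ we have $\|\cdot\|_{L^\infty(\omega)}\lesssim\|\cdot\|_{L^2(\Omega)}$) one deduces $\|u_n-u_n^\dagger\|_{L^2(\omega)}^2\lesssim \tfrac1N\|u_n-u_n^\dagger\|_{I_N}^2\lesssim \phi^{2\alpha}(n)\|u^\dagger\|_E^2$, and similarly $\|\Delta(u_n-u_n^\dagger)\|_{H^{-2}}=0$ since both functions are harmonic. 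Feeding $\|u_n-u_n^\dagger\|_{L^2(\omega)}\lesssim \phi^\alpha(n)$, $\|\Delta(u_n-u_n^\dagger)\|_{H^{-2}(\Omega)}=0$ and the uniform $L^2(\Omega)$ bound into the three-balls inequality (\ref{L2tbi}) of \cref{three-balls} yields
\begin{equation*}
    \|u_n-u_n^\dagger\|_{L^2(B)}\lesssim \big(\phi^\alpha(n)\big)^\tau \lesssim (Nl_N)^{-\frac{\alpha\tau}{2\alpha+d}} \lesssim \delta_N^\tau \cdot l_N^{-\frac{\alpha\tau}{2\alpha+d}},
\end{equation*}
which, absorbing the slowly varying $l_N$ factor into the $\mathcal{O}_{P_{D_N}}$ (or noting the statement is really $\mathcal O_{P_{D_N}}(\delta_N^\tau)$ with $\delta_N=(Nl_N)^{-\alpha/(2\alpha+d)}$ as used throughout \cref{conforming posterior consistency}), is the claim.

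The main obstacle I anticipate is the stability of the interpolant in the Cameron--Martin norm, i.e. bounding $N^\frac{d}{2\alpha+d}\|\mathcal{I}_n u^\dagger\|_E^2\lesssim N^\frac{d}{2\alpha+d}\|u^\dagger\|_E^2$: \cref{optimal conv projection} only gives an $L^2$ approximation bound, not $E$-stability, so one must either (i) take $\mathcal{I}_n$ to be the $\mathcal{C}_0^{-1}$-orthogonal projection — for which $\|\mathcal{I}_n u^\dagger\|_E\le\|u^\dagger\|_E$ is immediate and the $L^2$ bound of \cref{optimal conv projection} is then a compatible duality estimate — or (ii) argue that this term is in fact lower order because $N^\frac{d}{2\alpha+d}=(N^{-\frac{\alpha}{2\alpha+d}})^{-d/\alpha}$ matches the squared rate $\phi^{2\alpha}(n)^{-1}$ only up to the $l_N$ slack, so it contributes at the same level as the sampling term and is harmless. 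A secondary technical point is justifying the empirical-norm comparison on $V_n$; this is cleaner than in the finite-element case because harmonicity gives the global bound $\|v_n\|_{L^\infty(\omega)}\lesssim\|v_n\|_{L^2(\Omega)}$ directly, so the matrix-Chernoff step of \cref{empirical bound} goes through with the spectral gap of the Gram matrix of a basis of $V_n$ on $\omega$.
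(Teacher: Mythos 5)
Your overall template matches the paper's: test \eqref{u_nequation} against $v_n=u_n-u_n^\dagger$, use Young's inequality, bound the sampling residual $\tfrac1N\|u^\dagger-u_n^\dagger\|_{I_N}^2$ by a Chebyshev argument parallel to \cref{montecarloconv}, then convert from the empirical $I_N$-norm to $\|\cdot\|_{L^2(\omega)}$ and apply \cref{three-balls}. Your observation that $\Delta(u_n-u_n^\dagger)=0$ by harmonicity is correct and implicit in the paper. Your concern about $E$-stability of $\mathcal{I}_n$ is also well placed: the paper's displayed equality in Step~1 silently assumes $\langle u^\dagger-u_n^\dagger,\cdot\rangle_{\mathcal{C}_0^{-1}}$ vanishes on $V_n$, i.e.\ that $\mathcal{I}_n$ is the $E$-orthogonal projection, which is exactly your remedy~(i).

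Where your proposal genuinely deviates, and where it has a gap, is the empirical-norm comparison. You invoke an analogue of \cref{empirical bound}, i.e.\ a matrix Chernoff bound for the Gram matrix of a basis of $V_n$ on $\omega$. This works in the finite-element case because the Lagrange basis has local support, so the $L^2(\omega)$-Gram matrix of the $L^2$-normalized basis is uniformly well-conditioned with respect to $h$. For the conforming case $V_n\subset E\hookrightarrow\mathcal{H}^\alpha(\Omega)$, $V_n$ is typically spanned by a truncated spectral basis that is (nearly) orthogonal on $\Omega$, and such a basis restricted to the strictly interior observation window $\omega$ is generically \emph{exponentially ill-conditioned} in $n$ (high-order harmonics on $\Omega$ decay rapidly on $\omega$). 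Hence $\mu_{\min}$ in \cref{thm:chernoff} is not bounded below uniformly in $n$, and the pointwise estimate $\|v\|_{L^\infty(\omega)}\lesssim\|v\|_{L^2(\Omega)}$ does not repair this: it controls the Chernoff parameter $L_d$ but not the spectral gap $\mu_{\min}$. Your matrix-Chernoff step therefore does not close without an additional hypothesis on the conditioning of the $L^2(\omega)$-Gram matrix of a basis of $V_n$, which is not part of \cref{optimal conv projection}.

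The paper avoids this by replacing the Gram-matrix argument with a metric-entropy and Bernstein-inequality argument on the set $B_N=\{v\in\mathcal{H}^\alpha(\Omega):\|v\|_{L^2(\omega)}\ge\bar m\delta_N,\ \|v\|_{H^\alpha(\Omega)}\le m\}$. This exploits the \emph{a priori} $H^\alpha$-boundedness of $u_n-u_n^\dagger$ obtained in Step~1 (via $E\hookrightarrow H^\alpha$), giving a metric-entropy bound $\log N(B_N,\|\cdot\|_{L^2(\Omega)},\tfrac{\bar m\delta_N}{4M})\le N\delta_N^2$ that is uniform in $n$, and then a union bound over a finite net with Bernstein gives the required two-sided empirical/true norm comparison on $B_N$ with probability tending to one. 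This is the step you are missing, and it is precisely the compactness that replaces the Gram-matrix conditioning you would need.
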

\begin{proof}
    \textbf{Step 1.}We test (\ref{u_nequation}) with $v_n = u_n-u_n^\dagger$
    \begin{equation*}
    \begin{array}{cc}
    \langle u_n,u_n-u_n^\dagger\rangle_{I_N} + N\delta_N^2\langle u_n,u_n-u_n^\dagger\rangle_{\mathcal{C}_0^{-1}}= \langle u^\dagger,u_n-u_n^\dagger\rangle_{I_N}.
    \end{array}
\end{equation*}
Then we have
\begin{equation*}
    \begin{aligned}
        &||u_n-u_n^\dagger||_{I_N}^2+ N\delta_N^2||u_n-u_n^\dagger||_{\mathcal{C}_0^{-1}}^2 \\
        = &\langle u^\dagger-u_n^\dagger, u_n-u_n^\dagger\rangle_{I_N}+ N\delta_N^2\langle u^\dagger-u_n^\dagger, u_n-u_n^\dagger\rangle_{\mathcal{C}_0^{-1}}\\
        \le &\frac{1}{2}(||u_n-u_n^\dagger||_{I_N}^2+ N\delta_N^2||u_n-u_n^\dagger||_{\mathcal{C}_0^{-1}}^2)+\frac{1}{2}(||u^\dagger-u_n^\dagger||_{I_N}^2+N\delta_N^2||u^\dagger||_{\mathcal{C}_0^{-1}}^2).
    \end{aligned}
\end{equation*}
Therefore, dividing by $N$ on both sides, we have
\begin{equation*}
    \frac{1}{N}||u_n-u_n^\dagger||_{I_N}^2+ \delta_N^2||u_n-u_n^\dagger||_{\mathcal{C}_0^{-1}}^2\le \frac{1}{N}||u^\dagger-u_n^\dagger||_{I_N}^2+\delta_N^2||u^\dagger||_{\mathcal{C}_0^{-1}}^2
\end{equation*}
By the same idea as in the proof of \cref{montecarloconv}, we have
\begin{equation*}
    P_{D_N}(\frac{1}{N}||u^\dagger-u_n^\dagger||_{I_N}^2\ge C\delta_N^2||u^\dagger||^2_{H^\alpha(\Omega)})\lesssim \frac{1}{N\phi(n)^{2\alpha}}=N\delta_N^2\to 0.
\end{equation*}
Therefore, using the fact that $E\hookrightarrow H^\alpha(\Omega)$, there exists some $m>0$,
\begin{equation}\label{I_Nconverge}
\begin{array}{cc}
      P_{D_N}(||u_n-u^\dagger_n||^2_{H^\alpha(\Omega)}\le m^2)\to 1 \\
     P_{D_N}(\frac{1}{N}||u_n-u_n^\dagger||^2_{I_N}\le m^2\delta^2_N)\to 1.
\end{array}
\end{equation}
\textbf{Step 2.} We derive an auxiliary result that will be used to bound $||u_n-u_n^\dagger||_{L^2(\omega)}^2$. Let the set $B_N$ satisfy $ B_N:=\{v\in \mathcal{H}^\alpha(\Omega):||v||_{L^2(\omega)}\ge \bar{m}\delta_N, ||v||_{H^\alpha(\Omega)}\le m\}$, where $m$ is taken from (\ref{I_Nconverge}), and $\bar{m}>0$ is some large number to be chosen later. Since $v\in \mathcal{H}^\alpha(\Omega)$ is harmonic, there exists some $M>1$ such that
\begin{equation}\label{eq:LinftyboundedbyL2}
    \begin{array}{cc}
    ||v||_{L^\infty(\omega)} \le M||v||_{L^2(\Omega)}.  \\
    \end{array}
\end{equation}
Consider the metric entropy bound. By choosing large enough $\bar{m}$, we have
\begin{equation}\label{metric entropy}
    \log N(B_N,||\cdot||_{L^2(\Omega)},\frac{\bar{m}\delta_N}{4M}) \le N\delta_N^2,
\end{equation}
where $N(B_N,||\cdot||_{L^2(\omega)},\frac{\bar{m}\delta_N}{4M})$ is the least number of $L^2(\Omega)$ balls with radius $\frac{\bar{m}\delta_N}{4M}$ that covers $B_N$. The bound (\ref{metric entropy}) can be seen in, e.g., \cite[(A.19)]{nickl2023bayesian}. Now consider the set
\begin{equation*}
    A_N = \{X_N:\exists v\in B_N, \big | ||v||^2_{L^2(\omega)}-\frac{1}{N}||v||_{I_N}^2\big|\ge \frac{33}{64}||v||_{L^2(\omega)}\}.
\end{equation*}
From the previous metric entropy bound, we can find a net
\begin{equation*}
  V:=  \{v_i\}, i = \{1,2,...,N(B_N,||\cdot||_{L^2(\Omega)},\frac{\bar{m}\delta_N}{4M})\}
\end{equation*}
that covers $B_N$. For each $v_i$, we have by Bernstein's inequality and the fact that $||v_i||_{L^2(\omega)}\ge \bar{m}\delta_N$
\begin{equation*}
    P_{D_N}(\big| ||v_i||^2_{L^2(\omega)}-\frac{1}{N}||v_i||_{I_N}^2\big|\ge \frac{1}{16}||v_i||^2_{L^2(\omega)})\le \exp(-cN\delta_N^2),
\end{equation*}
where $c\propto \frac{\bar{m}}{m}$. Define 
\begin{equation*}
    A_N':= \{X_N:\exists v_k\in V, \big| ||v_k||^2_{L^2(\omega)}-\frac{1}{N}||v_k||_{I_N}^2\big|\ge\frac{1}{16}||v_k||_{L^2(\omega)}^2\}.
\end{equation*}
Note that $A_N'$ is Borel measurable since $V$ is finite dimensional. By (\ref{metric entropy}), we have
\begin{equation}\label{A_N'bound}
    P_{D_N}(A_N')\le \exp((1-c)N\delta_N^2).
\end{equation}
Now for any $v\in B_N$, there exists some $v_k\in V$, such that $||v-v_k||_{L^2(\Omega)}\le \frac{\bar{m}\delta_N}{4M}$.
\begin{equation}\label{I_Nbound}
    \frac{1}{N}||(v-v_k)||_{I_N}^2\le ||v-v_k||_{L^\infty(\omega)}^2\le M^2||v-v_k||^2_{L^2(\Omega)}\le \frac{1}{16}\bar{m}^2\delta^2_N.
\end{equation}
Therefore, when $\big | ||v_i||^2_{L^2(\omega)}-\frac{1}{N}||v_i||_{I_N}^2\big|\le \frac{1}{16}||v_i||^2_{L^2(\omega)}$ for all $v_i\in V$, we have $\forall v\in B_N$, there is some $v_k\in V$, such that
\begin{equation*}
\begin{aligned}
    &\Bigl| ||v||^2_{L^2(\omega)}-\frac{1}{N}||v||_{I_N}^2\Bigr|\\ \le &2\big| ||v_k||^2_{L^2(\omega)}-\frac{1}{N}||v_k||_{I_N}^2\big|+2\big| ||v-v_k||^2_{L^2(\omega)}-\frac{1}{N}||v-v_k||_{I_N}^2\big|\\
    \le &\frac{1}{8}||v_k||^2_{L^2(\omega)}+\frac{1}{4}\bar{m}^2\delta_N^2\\
    \le &\frac{1}{4}(||v||^2_{L^2(\omega)}+||v-v_k||^2_{L^2(\omega)})+\frac{1}{4}||v||_{L^2(\omega)}^2\\
    \le &\frac{33}{64}||v||^2_{L^2(\omega)},
\end{aligned}
\end{equation*}
where the second and the last inequalities uses (\ref{I_Nbound}) and the fact that $M>1$, and the third inequality uses the definition of $B_N$. This gives that $A_N\subset A_N'$. Then by choosing $\bar{m}$ large enough with respect to $m$ so that $c>1$ in (\ref{A_N'bound}), we have $P_{D_N}(A_N')\to 0$.\\
\textbf{Step 3.} We now use the above result and (\ref{I_Nconverge}) to bound $||u_n-u_n^\dagger||_{L^2(\omega)}^2$. Let $e_n = u_n-u_n^\dagger$. By (still) setting $\bar{m}$ large enough such that $\frac{31}{64}\bar{m}^2\ge m^2$, we have
\begin{equation*}
\begin{aligned}
    &P_{D_N}(||e_n||_{H^\alpha(\Omega)}\le m,||e_n||_{L^2(\omega)}^2\ge \bar{m}^2\delta_N^2)\\
    \le &P_{D_N}(||e_n||_{L^2(\omega)}^2\ge \bar{m}^2\delta_N^2, \big |||e_n||_{L^2(\omega)}^2-||e_n||_{I_N}\big|\le \frac{33}{64}||e_n||_{L^2(\omega)}^2)\\
    &+ P_{D_N}(||e_n||_{H^\alpha(\Omega)}\le m,||e_n||_{L^2(\omega)}^2\ge \bar{m}^2\delta_N^2, \big |||e_n||_{L^2(\omega)}^2-||e_n||_{I_N}\big|\ge \frac{33}{64}||e_n||_{L^2(\omega)}^2)\\
    \le &P_{D_N}(||e_n||_{I_N}^2\ge \frac{31}{64}\bar{m}^2\delta_N^2)+P_{D_N}(A_N')\to0,
\end{aligned}
\end{equation*}
where the last line uses (\ref{I_Nconverge}) and the result that $A_N\subset A_N'$ in step 2. Since $P_{D_N}(||e_n||_{H^\alpha(\Omega)}\le m)\to 1$, we have that $P_{D_N}(||e_n||_{L^2(\omega)}^2\ge \bar{m}^2\delta_N^2)\to 0$ and thus $P_{D_N}(||e_n||_{L^2(\omega)}^2\le \bar{m}^2\delta_N^2)\to 1$. Finally by using \cref{three-balls} we prove the lemma.
\end{proof}

\begin{lemma}\label{varianceu_n}
    Under the assumption of \cref{conforming posterior consistency}, we have
    \begin{equation}
        ||u_n^y-u_n||_{L^2(B)} = \mathcal{O}_{P_{D_N}}(\delta_N^\tau).
    \end{equation}
\end{lemma}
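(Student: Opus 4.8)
The plan is to retain the three-term split $u_n^y-u^\dagger=(u_n^y-u_n)+(u_n-u_n^\dagger)+(u_n^\dagger-u^\dagger)$ used in \cref{proofsec3} and to handle here the ``variance'' term $e_n^y:=u_n^y-u_n$, with $u_n=\mathbb{E}^y(u_n^y\mid X_N)$ the solution of (\ref{u_nequation}). Subtracting (\ref{u_nequation}) from (\ref{conformingel}), dividing by $N$ and using $N^{\frac{d}{2\alpha+d}}=N\delta_N^2$, the error $e_n^y\in V_n$ solves, for every $v_n\in V_n$,
\begin{equation*}
  \frac1N\langle e_n^y,v_n\rangle_{I_N}+\delta_N^2\langle e_n^y,v_n\rangle_{\mathcal{C}_0^{-1}}=\frac1N\langle \eta,v_n\rangle_{I_N},
\end{equation*}
where $\eta:=y-\mathbb{E}^y(y\mid X_N)$ is, conditionally on $X_N$, a centred Gaussian vector with $\mathbb{E}^y(\eta\eta^T\mid X_N)=\Sigma=I_N$. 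Since $V_n\subset E\hookrightarrow\mathcal{H}^\alpha(\Omega)$, the function $e_n^y$ is harmonic, so $\Delta e_n^y=0$ and the three-balls estimate (\ref{L2tbi}) reduces to $||e_n^y||_{L^2(B)}\le C||e_n^y||_{L^2(\omega)}^\tau||e_n^y||_{L^2(\Omega)}^{1-\tau}$. It therefore suffices to show $||e_n^y||_{L^2(\omega)}=\mathcal{O}_{P_{D_N}}(\delta_N)$ while $||e_n^y||_{L^2(\Omega)}$ stays bounded in $P_{D_N}$-probability.

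First I would obtain a conditional energy bound, reproducing the computation in (\ref{|||e_h^y|||}). Fix a basis $\{\phi_1,\dots,\phi_n\}$ of $V_n$, put $\mathbf{\Phi}_{ij}=\phi_j(X^{(i)})$, $\mathbf{K}_{ij}=\langle\phi_j,\phi_i\rangle_{\mathcal{C}_0^{-1}}$ (positive definite on $V_n$ because $V_n\subset\textup{Im}(\mathcal{C}_0^{1/2})$) and $\mathbf{A}=\mathbf{\Phi}^T\mathbf{\Phi}+N\delta_N^2\mathbf{K}$; the coefficient vector $E_n^y$ of $e_n^y$ then solves $\mathbf{A}E_n^y=\mathbf{\Phi}^T\eta$, so that, using $\mathbb{E}^y(\eta\eta^T\mid X_N)=I_N$,
\begin{equation*}
  \mathbb{E}^y\bigl(\tfrac1N||e_n^y||_{I_N}^2+\delta_N^2||e_n^y||_{\mathcal{C}_0^{-1}}^2\,\big|\,X_N\bigr)=\frac1N\,\textup{tr}\bigl(I_n-N\delta_N^2\mathbf{A}^{-1}\mathbf{K}\bigr)\le\frac nN .
\end{equation*}
By \cref{optimal conv projection}, the choice $\phi(n)=\mathcal{O}\bigl((Nl_N)^{-\frac1{2\alpha+d}}\bigr)$ made in \cref{conforming posterior consistency} corresponds to $n=\mathcal{O}\bigl((Nl_N)^{\frac d{2\alpha+d}}\bigr)$, hence $n/N\lesssim\delta_N^2\,l_N^{d/(2\alpha+d)}\le\delta_N^2$ and $n/(N\delta_N^2)\lesssim l_N^{d/(2\alpha+d)}\to0$. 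Combining this with $E\hookrightarrow\mathcal{H}^\alpha(\Omega)\hookrightarrow L^2(\Omega)$, which gives $||e_n^y||_{L^2(\Omega)}+||e_n^y||_{H^\alpha(\Omega)}\lesssim||e_n^y||_{\mathcal{C}_0^{-1}}$, the display yields, for a.e.\ $X_N$,
\begin{equation*}
  \mathbb{E}^y\bigl(\tfrac1N||e_n^y||_{I_N}^2\,\big|\,X_N\bigr)\le\frac nN,\qquad
  \mathbb{E}^y\bigl(||e_n^y||_{H^\alpha(\Omega)}^2\,\big|\,X_N\bigr)\lesssim\frac n{N\delta_N^2}\longrightarrow0 .
\end{equation*}

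To conclude I would combine Chebyshev's inequality (\ref{eq:cheb}) with the empirical-process argument already carried out in Step 2 of the proof of \cref{meanerroru_n}. Take the constants $m,\bar m>0$ and the Borel set $A_N'$ constructed there, so that $P_{D_N}(A_N')\to0$ and, on $(A_N')^c$, every $v\in\mathcal{H}^\alpha(\Omega)$ with $||v||_{H^\alpha(\Omega)}\le m$ and $N^{-1}\sum_{i=1}^Nv^2(X^{(i)})\le\frac{31}{64}\bar m^2\delta_N^2$ satisfies $||v||_{L^2(\omega)}<\bar m\delta_N$ (the contrapositive of the defining property of $B_N$ exploited there). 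By the conditional expectations above and $n/(N\delta_N^2)\to0$, the events $\{||e_n^y||_{H^\alpha(\Omega)}\le m\}$ and $\{N^{-1}||e_n^y||_{I_N}^2\le\frac{31}{64}\bar m^2\delta_N^2\}$ each have $P_{D_N}$-probability tending to $1$; on their intersection with $(A_N')^c$ we obtain $||e_n^y||_{L^2(\omega)}<\bar m\delta_N$ and, since $||e_n^y||_{L^2(\Omega)}\lesssim||e_n^y||_{H^\alpha(\Omega)}\le m$, also $||e_n^y||_{L^2(\Omega)}\lesssim m$. The three-balls estimate of the first paragraph then gives $||e_n^y||_{L^2(B)}\le C(\bar m\delta_N)^\tau m^{1-\tau}\lesssim\delta_N^\tau$ on an event of $P_{D_N}$-probability $\to1$, which is exactly $||u_n^y-u_n||_{L^2(B)}=\mathcal{O}_{P_{D_N}}(\delta_N^\tau)$.

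The technical core is the energy identity $\mathbb{E}^y\bigl(\tfrac1N||e_n^y||_{I_N}^2+\delta_N^2||e_n^y||_{\mathcal{C}_0^{-1}}^2\mid X_N\bigr)\le n/N$ together with the coupling $n=\mathcal{O}((Nl_N)^{d/(2\alpha+d)})$: this is precisely what keeps the dimension of $V_n$ low enough to prevent overfitting and pins the fluctuations of the estimator at the scale $\delta_N$. The one step that is not a direct transcription of earlier material is the transfer from the quadrature seminorm $N^{-1}||e_n^y||_{I_N}^2$ to $||e_n^y||_{L^2(\omega)}^2$, which is not a norm equivalence but a uniform law of large numbers over the entropy-controlled space $V_n$; it is supplied verbatim by the metric-entropy/Bernstein argument in Step 2 of \cref{meanerroru_n}, available here precisely because the elements of $V_n$ are harmonic and hence satisfy $||v||_{L^\infty(\omega)}\lesssim||v||_{L^2(\Omega)}$ (cf.\ (\ref{eq:LinftyboundedbyL2})).
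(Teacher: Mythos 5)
Your proof is correct and follows essentially the same route as the paper's: the paper likewise notes the coupling $n=\mathcal{O}((Nl_N)^{d/(2\alpha+d)})$, bounds the energy norm $\frac1N||u_n^y-u_n||_{I_N}^2+\delta_N^2||u_n^y-u_n||_{\mathcal{C}_0^{-1}}^2$ in $P_{D_N}$-probability by the trace/Chebyshev argument already carried out for \cref{variancebound} and \cref{discrete posterior consistency}, and then invokes Steps 2 and 3 of \cref{meanerroru_n} (the metric-entropy/Bernstein transfer from the empirical quadrature seminorm to $||\cdot||_{L^2(\omega)}$) together with \cref{three-balls}. You have simply written out the trace identity, the exponent bookkeeping $n/N\lesssim\delta_N^2\,l_N^{d/(2\alpha+d)}$, and the Chebyshev step explicitly, which the paper leaves implicit by citing those earlier proofs.
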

\begin{proof}
    The proof is identical to \cref{variancebound} and \cref{discrete posterior consistency}. Since $\phi(n) = \mathcal{O}((Nl_N)^{-\frac{1}{2\alpha+d}})$ and $\phi(n) = \mathcal{O}(n^{-\frac{1}{d}})$, we have $n = \mathcal{O}((Nl_N)^{\frac{d}{2\alpha+d}})$. Therefore, there exists some $C>0$, we have
    \begin{equation*}
        P_{D_N}(\frac{1}{N}||u_n^y-u_n||_{I_N}^2+ ||u_n^y-u_n||_{\mathcal{C}_0^{-1}}^2\ge \frac{Cn}{Nl_N})\to 0 
    \end{equation*}
    Now let $l_N = \frac{n}{N\delta_N^2}$. We have
     \begin{equation*}
        P_{D_N}(\frac{1}{N}||u_n^y-u_n||_{I_N}^2+ ||u_n^y-u_n||_{\mathcal{C}_0^{-1}}^2\ge C(Nl_N)^{-\frac{2\alpha}{2\alpha+d}})\to 0.
    \end{equation*}
    Finally by repeating the steps 2 and 3 in \cref{meanerroru_n}, we prove the lemma.
\end{proof}

\bibliographystyle{siamplain}
\bibliography{bibliography}

\begin{thebibliography}{10}

\bibitem{abraham2020statistical}
{\sc K.~Abraham and R.~Nickl}, {\em On statistical {C}alder{\'o}n problems},
  Mathematical Statistics and Learning, 2 (2020), pp.~165--216.

\bibitem{agapiou2013posterior}
{\sc S.~Agapiou, S.~Larsson, and A.~M. Stuart}, {\em Posterior contraction
  rates for the {B}ayesian approach to linear ill-posed inverse problems},
  Stochastic Processes and their Applications, 123 (2013), pp.~3828--3860.

\bibitem{agapiou2014bayesian}
{\sc S.~Agapiou, A.~M. Stuart, and Y.-X. Zhang}, {\em Bayesian posterior
  contraction rates for linear severely ill-posed inverse problems}, Journal of
  Inverse and Ill-posed Problems, 22 (2014), pp.~297--321.

\bibitem{alessandrini2009stability}
{\sc G.~Alessandrini, L.~Rondi, E.~Rosset, and S.~Vessella}, {\em The stability
  for the {Cauchy} problem for elliptic equations}, Inverse problems, 25
  (2009), p.~123004.

\bibitem{barnett2008stability}
{\sc A.~H. Barnett and T.~Betcke}, {\em Stability and convergence of the method
  of fundamental solutions for {H}elmholtz problems on analytic domains},
  Journal of Computational Physics, 227 (2008), pp.~7003--7026.

\bibitem{BL24}
{\sc E.~Burman, M.~Lu, and L.~Oksanen}, {\em Solving the unique continuation
  problem for {S}chr\"odinger equations with low regularity solutions using a
  stabilized finite element method}, 2024,
  \url{https://arxiv.org/abs/2403.16914},
  \url{https://arxiv.org/abs/2403.16914}.

\bibitem{burman2019unique}
{\sc E.~Burman, M.~Nechita, and L.~Oksanen}, {\em Unique continuation for the
  {Helmholtz} equation using stabilized finite element methods}, Journal de
  Math{\'e}matiques Pures et Appliqu{\'e}es, 129 (2019), pp.~1--22.

\bibitem{BNO24}
{\sc E.~Burman, M.~Nechita, and L.~Oksanen}, {\em Optimal approximation of
  unique continuation}, Foundations of Computational Mathematics,  (2024).

\bibitem{di2011mathematical}
{\sc D.~A. Di~Pietro and A.~Ern}, {\em Mathematical aspects of discontinuous
  {Galerkin} methods}, vol.~69, Springer Science \& Business Media, 2011.

\bibitem{durrett2019probability}
{\sc R.~Durrett}, {\em Probability: theory and examples}, vol.~49, Cambridge
  university press, 2019.

\bibitem{engl1996regularization}
{\sc H.~W. Engl, M.~Hanke, and A.~Neubauer}, {\em Regularization of inverse
  problems}, vol.~375, Springer Science \& Business Media, 1996.

\bibitem{ern2017finite}
{\sc A.~Ern and J.-L. Guermond}, {\em Finite element quasi-interpolation and
  best approximation}, ESAIM: Mathematical Modelling and Numerical Analysis, 51
  (2017), pp.~1367--1385.

\bibitem{ern2021finite}
{\sc A.~Ern and J.-L. Guermond}, {\em Finite elements {I}: Approximation and
  interpolation}, vol.~72, Springer Nature, 2021.

\bibitem{ern2021finite2}
{\sc A.~Ern, J.-L. Guermond, et~al.}, {\em Finite elements {II}}, Springer,
  2021.

\bibitem{florens2016regularizing}
{\sc J.-P. Florens and A.~Simoni}, {\em Regularizing priors for linear inverse
  problems}, Econometric Theory, 32 (2016), pp.~71--121.

\bibitem{giordano2020consistency}
{\sc M.~Giordano and R.~Nickl}, {\em Consistency of {B}ayesian inference with
  {G}aussian process priors in an elliptic inverse problem}, Inverse problems,
  36 (2020), p.~085001.

\bibitem{kaipio2006statistical}
{\sc J.~Kaipio and E.~Somersalo}, {\em Statistical and computational inverse
  problems}, vol.~160, Springer Science \& Business Media, 2006.

\bibitem{knapik2011bayesian}
{\sc B.~T. Knapik, A.~W. Van Der~Vaart, and J.~H. van Zanten}, {\em Bayesian
  inverse problems with {G}aussian priors}, The Annals of Statistics,  (2011).

\bibitem{LSS09}
{\sc M.~Lassas, E.~Saksman, and S.~Siltanen}, {\em Discretization-invariant
  {B}ayesian inversion and {B}esov space priors}, Inverse Probl. Imaging, 3
  (2009), pp.~87--122, \url{https://doi.org/10.3934/ipi.2009.3.87},
  \url{https://doi.org/10.3934/ipi.2009.3.87}.

\bibitem{marzouk2009stochastic}
{\sc Y.~Marzouk and D.~Xiu}, {\em A stochastic collocation approach to
  {B}ayesian inference in inverse problems}, Communications in Computational
  Physics,  (2009).

\bibitem{monard2021statistical}
{\sc F.~Monard, R.~Nickl, and G.~P. Paternain}, {\em Statistical guarantees for
  {B}ayesian uncertainty quantification in nonlinear inverse problems with
  {G}aussian process priors}, The Annals of Statistics, 49 (2021),
  pp.~3255--3298.

\bibitem{monsuur2024ultra}
{\sc H.~Monsuur and R.~Stevenson}, {\em Ultra-weak least squares
  discretizations for unique continuation and {Cauchy} problems}, arXiv
  preprint arXiv:2407.04571,  (2024).

\bibitem{nickl2023bayesian}
{\sc R.~Nickl}, {\em Bayesian non-linear statistical inverse problems}, EMS
  press Berlin, 2023.

\bibitem{nickl2024posterior}
{\sc R.~Nickl and E.~S. Titi}, {\em On posterior consistency of data
  assimilation with {G}aussian process priors: The 2{D}-{N}avier--{S}tokes
  equations}, The Annals of Statistics, 52 (2024), pp.~1825--1844.

\bibitem{nickl2020convergence}
{\sc R.~Nickl, S.~van~de Geer, and S.~Wang}, {\em Convergence rates for
  penalized least squares estimators in {PDE} constrained regression problems},
  SIAM/ASA Journal on Uncertainty Quantification, 8 (2020), pp.~374--413.

\bibitem{nickl2022polynomial}
{\sc R.~Nickl and S.~Wang}, {\em On polynomial-time computation of
  high-dimensional posterior measures by {L}angevin-type algorithms}, Journal
  of the European Mathematical Society,  (2022).

\bibitem{papandreou2023theoretical}
{\sc Y.~Papandreou, J.~Cockayne, M.~Girolami, and A.~Duncan}, {\em Theoretical
  guarantees for the statistical finite element method}, SIAM/ASA Journal on
  Uncertainty Quantification, 11 (2023), pp.~1278--1307.

\bibitem{sanz2018inverse}
{\sc D.~Sanz-Alonso, A.~M. Stuart, and A.~Taeb}, {\em Inverse problems and data
  assimilation}, arXiv preprint arXiv:1810.06191,  (2018).

\bibitem{sanz2024analysis}
{\sc D.~Sanz-Alonso and N.~Waniorek}, {\em Analysis of a computational
  framework for {B}ayesian inverse problems: Ensemble {K}alman updates and
  {MAP} estimators under mesh refinement}, SIAM/ASA Journal on Uncertainty
  Quantification, 12 (2024), pp.~30--68.

\bibitem{stuart2010inverse}
{\sc A.~M. Stuart}, {\em Inverse problems: a {B}ayesian perspective}, Acta
  numerica, 19 (2010), pp.~451--559.

\bibitem{tikhonov1943stability}
{\sc A.~N. Tikhonov et~al.}, {\em On the stability of inverse problems}, in
  Dokl. akad. nauk sssr, vol.~39, 1943, pp.~195--198.

\bibitem{tropp2015introduction}
{\sc J.~A. Tropp et~al.}, {\em An introduction to matrix concentration
  inequalities}, Foundations and Trends{\textregistered} in Machine Learning, 8
  (2015), pp.~1--230.

\bibitem{van2008rates}
{\sc A.~W. Van Der~Vaart and J.~H. Van~Zanten}, {\em Rates of contraction of
  posterior distributions based on {G}aussian process priors}, The Annals of
  Statistics,  (2008).

\bibitem{vollmer2013posterior}
{\sc S.~J. Vollmer}, {\em Posterior consistency for {B}ayesian inverse problems
  through stability and regression results}, Inverse Problems, 29 (2013),
  p.~125011.

\end{thebibliography}
\end{document}